\documentclass[11pt]{amsart}

\usepackage{enumitem}
\usepackage{xcolor,graphicx}
\usepackage{hyperref}
\usepackage{tikz}
\usepackage{amsmath, amsfonts, latexsym, array, amssymb, amscd }
\usetikzlibrary{shapes,arrows,positioning,decorations.pathreplacing}

\usepackage{fancyhdr,datetime2}
\newcommand{\Ar}
{\mathcal{L}_A}
\newcommand{\ph}{\varphi}

\newcommand{\eps}{\varepsilon}

\newcommand{\RR}{\mathbb{R}}

\newcommand{\PPP}{\mathcal{P}}
\newcommand{\SSS}{\mathcal{S}}
\newcommand{\BBB}{\mathcal{B}}
\newcommand{\UUU}{\mathcal{U}}
\newcommand{\GGG}{\mathcal{G}}

\newcommand{\RRR}{\mathcal{R}}

\newcommand{\supp}{\text{supp}}
\def\ae{\text{-a.e.}\ }

\newcommand{\NN}{\mathbb{N}}

\usepackage{ulem}

\numberwithin{equation}{section}

\newtheorem{theorem}{Theorem}[section]
\newtheorem{proposition}[theorem]{Proposition}
\newtheorem{lemma}[theorem]{Lemma}
\newtheorem{definition}[theorem]{Definition}
\newtheorem{remark}[theorem]{Remark}
\newtheorem{corollary}[theorem]{Corollary}

\newtheorem{thma}{Theorem}

\begin{document}

\title[Maximal measures for flows with nonuniform structure]{Maximal measures for flows with non-uniform structure}

\author{Qiao Liu}
\author{Tianyu Wang}
\author{Weisheng Wu}

\address{School of mathematics and computational science, Xiangtan university, Xiangtan, Hunan 411105, P.R. China}
\email{qiaoliu@xtu.edu.cn}

\address{School of Mathematical Sciences, Shanghai Jiaotong University, No 800 Dongchuan Road, Shanghai 200240, P.R.China}
\email{a356789xe@sjtu.edu.cn}

\address{School of Mathematical Sciences, Xiamen University, Xiamen, 361005, P.R. China}
\email{wuweisheng@xmu.edu.cn}

\begin{abstract}
In this paper, we study ergodic optimization of continuous functions for flows by concentrating on the entropy spectrum of their maximizing measures. Precisely, over a wide family of flows with non-uniformly hyperbolic structure, we obtain a picture describing coexistence of continuous functions whose maximizing measures have large and small entropy respectively in $C^0$-topology. Our proof relies on the orbit decomposition technique, originally introduced by Climenhaga and Thompson, for flows with weakened versions of expansiveness and specification property. In particular, our results extend \cite{STY} from non-Markov shift on symbolic spaces to a considerably broad class of continuous flows with nonuniform structure. To illustrate this, we apply our general results to both geodesic flows and frame flows over closed rank one manifolds of nonpositive curvature.
\end{abstract}



\maketitle

\section{Introduction}

Let $(X,F)$ denote a continuous flow $F=\{f_t \}_{t \in \RR}$ or a homeomorphism 
on a compact metric space $X$, and $\mathcal{M}_F(X)$ (resp. $\mathcal{M}_F^e(X)$) denote the space of $F$-invariant (resp. $F$-ergodic) Borel probability measure on $X$ equipped with the weak-* topology. Denoting by $C(X, \RR)$ the space of real-valued continuous functions on $X$, we define for each $\ph  \in C(X, \RR) $ its maximum functional 
\begin{equation}\label{max}
\Lambda_F(\ph):=\sup \Big\{\int \ph d\nu: \nu \in \mathcal{M}_F(X)\Big\} 
\end{equation}
and the set of all maximizing measure  of  $\ph$, 

$$\mathcal{M}_{\max}(\ph):=\Big\{\mu \in  \mathcal{M}_F(X) : \int \ph d\mu=\Lambda_F(\ph)\Big\}.$$

The study of properties for maximizing measures, known as ergodic optimization nowadays, originated in the 1990s and has attracted many research interests. Of particular interest is to establish properties of $\mathcal{M}_{\max}(\ph)$ when $\ph$ belongs to a generic\footnote{Throughout this paper, we say a property is \emph{generic} if it holds for a dense $G_{\delta}$ set.} subset of Banach space consisting of suitably regular continuous functions. One such specified property is that maximizing measures are supported on periodic orbits. The possibility of typical maximizing measures being periodic was suggested by early work on ergodic optimization \cite{Bo00, HO96a, HO96b} and later formulated as Typically Periodic Optimization conjecture. The regularity of continuous functions plays a crucial role on the statistical property of maximizing measures. In fact, typical properties of maximizing measures in the space of continuous functions are rather different from those of more regular ones like  H\"older or Lipschitz functions. 

Another significant strand of research in ergodic optimization is by interpreting maximizing measures as the zero temperature limits of equilibrium states. Recall that equilibrium state is an invariant probability measure maximizing the quantity $h_\mu(F)+\int \ph d\mu$. Replacing $\ph$ by $\alpha\ph$ for $\alpha \in \RR$, as $\alpha \rightarrow \infty$, we observe that the entropy term loses relative importance and homogeneity of maximizing measure implies that  for large $\alpha$, an equilibrium measure $\mu_{\alpha\ph}$ for $\alpha\ph$ is almost maximizing for $f$ (the thermodynamic interpretation of the parameter $\alpha$ is as an inverse temperature, so that letting $\alpha \rightarrow \infty$ is referred to as a zero temperature limit). Following this philosophy, a reasonable prerequisite in many settings would be the existence and uniqueness of equilibrium state. Such properties can be established by certain dynamical behaviors of both the system and the function (the latter is often called the potential in thermodynamic formalism), whose ideas trace back to the remarkable work of Bowen in 1970’s \cite{Bow74}. In this work, he showed that in the case of homeomorphism $(X, F, \ph)$, expansiveness and specification of $(X, F )$ plus a bounded distortion property of $\ph$ (also known as the Bowen property nowadays) would indicate existence and uniqueness of equilibrium state for $(X, F, \ph)$. Bowen's criteria was improved by Climenhaga and Thompson \cite{CT16} by relaxing the above three uniform conditions. 

Recently, Shinoda, Takahasi and Yamamoto \cite{STY} showed that the space of continuous functions over shift spaces with certain non-Markov structures contains two disjoint subsets: the first one is a generic set for which all maximizing measures have ``relatively small'' entropy; the second one is the set of functions having uncountably many, fully supported ergodic maximizing measures with ``relatively large'' entropy. Though possibly not being generic by itself, the second set is still rich in the sense of being dense among the complement of the first set.

In this paper, we study ergodic optimization of continuous functions from the perspective of entropy spectrum of generic maximizing measures, by extending the above symbolic results in \cite{STY} to a considerably wide family of flows. All systems we concern possess a variety of non-uniformly hyperbolic structures, with our major emphasis over two common non-uniform versions of expansiveness and specification property. With the development on the theory of non-uniform expansiveness and specification in the past two decades, people have come to realize that systems possessing these sorts of properties may display all different categories of non-uniform hyperbolicity; these include systems with dominated splittings that are partially hyperbolic or not, and other derived-from-Anosov systems (see \cite{CFT18,CFT19,CT21, Wang21, BCFT, PYY}). Therefore, along with results in \cite{STY}, our results demonstrate the rich structure of ergodic optimization beyond uniformly hyperbolic or Markov systems.

\subsection{General results}
Let $(X,F)$ be a continuous flow on a compact metric space $(X,d)$ which does not possess fixed points\footnote{Though the entire paper is in the context of flows, our results can be easily adapted to those homemorphisms as all our strategies for flows are applicable there, following relatively simpler arguments.}. We mainly concern the metric entropy of maximizing measures of continuous functions for flows with non-uniformly hyperbolic structure. As mentioned above, one main obstruction to such structure in our setting is the non-uniform versions of expansiveness. Precisely, we consider the following two types of weakened expansiveness:
\begin{enumerate} [label=\upshape{(E\arabic{*})}]
     \item \label{E1} $(X,F)$ is entropy expansive,
     \item \label{E2} $(X,F)$ is not entropy expansive, yet $h_{\exp}^{\perp}<h$.
 \end{enumerate}
Here $h:=h_{\text{top}}(F)$ denotes the topological entropy of the flow and $h_{\exp}^{\perp}$ the obstruction entropy to expansiveness (see Definition \ref{def:entropyexpansive}). Roughly speaking, $h_{\exp}^{\perp}$ measures the entropy for the non-expansive part of the system, and the condition $h_{\exp}^{\perp}<h$ indicates that all $\mu\in \mathcal M_F(X)$ with large measure-theoretic entropy $h_\mu(F)$ is almost expansive. 

We equip $C(X,\mathbb{R})$ with the usual $C^0$-norm given by
$$\|\varphi\|_{C^0}:=\max\{|\varphi(x)|: x\in X\} \quad \text{ for each }\varphi\in C(X,\RR).$$
For any $H\geq 0$, let 
$$
\mathcal{R}_H:=\{\ph\in C(X,\RR): h_\mu(F)\le H \ \text{for all}\  \mu\in \mathcal{M}_{\max}(\ph)\}.
$$
Our first result is stated as follows.
\begin{thma} \label{thm:main} 
Let $(X,F)$ be a fixed-points free continuous flow on a compact metric space. Suppose that $(X,F)$ has expansive type of \ref{E1} or \ref{E2}, and $X\times\mathbb{R}^+$ admits a decomposition $(\mathcal{P},\mathcal{G},\mathcal{S})$ such that $\mathcal{G}$ satisfies weak controlled specification at all scale $\eta>0$, with gap function $h_{\eta}^{\mathcal{G}}$ satisfying $\liminf_{t\to \infty}h_{\eta}^{\mathcal{G}}(t)/\log t=0$, and $h([\mathcal{P}]\cup [\mathcal{S}])<h$. Write
\begin{align*}
H^{\perp}:=\left\{
\begin{array}{cr}
h([\mathcal{P}]\cup [\mathcal{S}]), & \text{if } (X,F) \text{ is of expansive type \ref{E1}},
\\
\max\{h([\mathcal{P}]\cup [\mathcal{S}]),h_{\exp}^{\perp}\}, & \text{if } (X,F) \text{ is of expansive type \ref{E2}}.
\end{array}
\right.
\end{align*}
Then $R_{H}$ is dense $G_\delta$ in $C(X,\RR)$ for all $H\in [H^{\perp},h)$. 
\end{thma}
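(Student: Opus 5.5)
We take the statement as worded: $\mathcal{R}_H$ is the set of potentials \emph{all} of whose maximizing measures have entropy at most $H$. The plan has two steps: $\mathcal{R}_H$ is a $G_\delta$ set because entropy is upper semicontinuous above $H^{\perp}$, and it is dense by Baire's theorem once we have a perturbation lemma.

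\textbf{Step 1: $G_\delta$.} For $n\ge1$ put
$$\mathcal{U}_n:=\{\ph\in C(X,\RR): h_\mu(F)<H+1/n \text{ for all } \mu\in\mathcal{M}_{\max}(\ph)\}.$$
Then $\mathcal{R}_H=\bigcap_n \mathcal{U}_n$. To see that each $\mathcal{U}_n$ is open, suppose $\ph_k\to\ph$ with $\ph_k\notin \mathcal{U}_n$. Pick $\mu_k\in\mathcal{M}_{\max}(\ph_k)$ with $h_{\mu_k}(F)\ge H+1/n$, and pass to a weak$^*$ limit $\mu$. Since $|\Lambda_F(\ph_k)-\Lambda_F(\ph)|\le\|\ph_k-\ph\|_{C^0}$, the limit $\mu$ lies in $\mathcal{M}_{\max}(\ph)$.

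We then need $h_\mu(F)\ge\limsup_k h_{\mu_k}(F)$. Under \ref{E1} the entropy map is upper semicontinuous on $\mathcal{M}_F(X)$. Under \ref{E2} we use the standard consequence of Definition \ref{def:entropyexpansive}: there is a scale $\eps$ such that every $\mu$ with $h_\mu(F)>h^{\perp}_{\exp}$ is almost expansive at scale $\eps$. Hence $\limsup_k h_{\mu_k}(F)\le \max\{h_\mu(F),h^\perp_{\exp}\}$. Because $H+1/n>H\ge H^{\perp}\ge h^{\perp}_{\exp}$, this yields $h_\mu(F)\ge H+1/n$. So $\ph\notin\mathcal{U}_n$, and $\mathcal{U}_n$ is open. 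This is where the lower bound $H\ge H^\perp$ enters in case \ref{E2}.

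\textbf{Step 2: density.} By Baire's theorem it suffices to show each $\mathcal{U}_n$ is dense. We may first replace $\ph$ by a nearby potential with a unique, hence ergodic, maximizing measure $\mu$; this set is generic by Jenkinson's theorem. If $h_\mu(F)<H+1/n$, we are done.

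Otherwise $h_\mu(F)>H\ge h([\mathcal{P}]\cup[\mathcal{S}])$. We then use the decomposition $(\mathcal{P},\mathcal{G},\mathcal{S})$ together with weak controlled specification for $\mathcal{G}$ to build an invariant measure $\nu$ with the following properties:
\begin{itemize}
\item $\nu$ is weak$^*$ close to $\mu$;
\item $h_\nu(F)<H+1/n$;
\item $\nu$ is supported on a compact invariant set $K$ with $h_{\rm top}(K)<H+1/n$.
\end{itemize}
For the construction, note that $\mu$-typical orbit segments have a good core in $\mathcal{G}$ of proportion close to $1$, since $h_\mu(F)>h([\mathcal{P}]\cup[\mathcal{S}])$. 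We concatenate boundedly many such cores, repeated along a single fixed pattern, using specification. The condition $\liminf_t h^{\mathcal{G}}_\eta(t)/\log t=0$ lets us choose transition times whose cost is negligible both in the integral $\int\ph$ and in entropy. The resulting orbit closure is a set $K$ of arbitrarily small topological entropy that carries measures $\nu$ with $\int\ph\,d\nu>\Lambda_F(\ph)-\delta$.

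We then set
$$\psi=\ph-\delta'\,\min\{1,\operatorname{dist}(x,K)/r\},$$
which satisfies $\|\psi-\ph\|_{C^0}\le\delta'$. We argue, using Step 1's semicontinuity once more, that every maximizing measure of $\psi$ gives most of its mass to a small neighbourhood of $K$. Such a measure then has entropy below $H+1/n$, via the scale-$\eps$ almost expansiveness and $h_{\rm top}(K)<H+1/n$.

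\textbf{Main obstacle.} The last point is the hard one: showing that the maximizers of $\psi$ are entropy-controlled, not merely weak$^*$ close to $K$. My first attempt would be a Bishop--Phelps type argument, following Jenkinson and \cite{STY}. Since $\nu$ is nearly maximizing for $\ph$, there is $\psi$ with $\|\psi-\ph\|_{C^0}\le\sqrt{\delta}$ such that every $\psi$-maximizing measure is close to $\nu$, even in the finer sense of being supported near $K$ for most of its time. Upper semicontinuity of entropy at scale $\eps$ on measures concentrated near $K$ then gives entropy below $H+1/n$. The remaining technical point is to make the specification construction produce $K$ with controlled topological entropy uniformly as $\delta\to0$.
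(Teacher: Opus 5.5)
Your Step 1 is correct. It is essentially the paper's openness argument, run in $C(X,\RR)$ rather than in $\overline{\mathcal{M}_F^e(X)}$. One small fix is needed: under \ref{E2}, the implication ``$h_\mu(F)>h^\perp_{\exp}(\eps)\Rightarrow\mu(\text{NE}(\eps))=0$'' holds only for ergodic $\mu$. So take each $\mu_k$ to be an \emph{ergodic} $\ph_k$-maximizing measure with $h_{\mu_k}(F)\ge H+1/n$, which ergodic decomposition allows, and fix $\eps$ with $h^\perp_{\exp}(\eps)<H+1/n$.

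The genuine gap is the last step of Step 2, the one you flag yourself. Because $\nu$ is only $\delta$-almost maximizing for $\ph$, your $\psi=\ph-\delta'\min\{1,d(\cdot,K)/r\}$ gives only $\int\min\{1,d(\cdot,K)/r\}\,dm<\delta/\delta'$ for each $\psi$-maximizer $m$, with $\delta'$ capped by the allowed size of the perturbation. This does not bound $h_m(F)$: $m$ may charge the complement of $K$, and measures carried by the $r$-neighbourhood of $K$ can have entropy above $h_{\text{top}}(K)$. Upper semicontinuity controls entropy only once $r$ and $\delta/\delta'$ are below thresholds depending on $K$, but $K$ was itself built from $\delta$. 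That circularity is why you end up needing uniformity as $\delta\to0$, which the construction does not provide. Bishop--Phelps does not give what you claim either. Theorem \ref{thm:tangentapproximation} produces \emph{one} functional tangent to $\Lambda_F$ at $\psi$, i.e.\ one $\psi$-maximizing measure norm-close to $\nu$, and says nothing about the other maximizers of $\psi$.

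The missing idea is to make the low-entropy measure \emph{exactly} maximizing before penalizing. Take $\nu$ ergodic and supported on $K$; in the paper every ergodic measure on the orbit closure $Y_0$ lies in the prescribed weak$^*$ neighbourhood of $\mu$, so $\Lambda_F(\ph)-\int\ph\,d\nu<\eps$. Lemma \ref{close} then gives $g$ with $\|g-\ph\|_{C^0}<\eps$ and $\nu\in\mathcal{M}_{\max}(g)$. Every maximizer $m$ of $g-\eps' d(\cdot,K)$ then satisfies $\int d(\cdot,K)\,dm=0$, so $m$ is supported on $K$ and has entropy at most $h_{\text{top}}(K)<H+1/n$. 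Alternatively, add $\eps'$ times a function uniquely maximized by $\nu$ (Lemma \ref{thm:jen}). One $K$ suffices and no uniformity in $\delta$ is needed. The paper packages exactly this through the flow version of Morris's theorem (Proposition \ref{thm:morris}): it proves $O_n=\{\mu\in\overline{\mathcal{M}_F^e(X)}:h_\mu(F)<H+1/n\}$ is open and dense, then transfers this to $C(X,\RR)$.

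Your construction of $K$ also skips the place where the expansivity hypothesis is needed for density. The specification count only bounds $h(K,\eps_0/2)$. In the paper this is a single core $(x_0,c_k)\in\mathcal{G}^1$ repeated bi-infinitely, paying for the variable gap lengths and the gap segments, which is where $\liminf h_\eta(t)/\log t=0$ enters. Passing to $h_{\text{top}}(K)$ requires entropy expansiveness at scale $\eps_0$ under \ref{E1}. Under \ref{E2} it requires a contradiction argument: if $h(K)>H_0^\perp>h^\perp_{\exp}(\eps_0)$, then $h(K,\eps_0/2)=h(K)$. So under \ref{E2} you only get $h_{\text{top}}(K)\le H_0^\perp$ for some chosen $H_0^\perp\in(H^\perp,H+1/n)$, not arbitrarily small entropy. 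That suffices, but it is a second place where $H\ge H^\perp$ is used.
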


Definitions for orbit decomposition and weak controlled specification can be found in Definitions \ref{def:decomp} and \ref{speci} respectively. Compared to uniformly hyperbolic settings in which typically periodic optimization concerning periodicity of maximizing measures for generic Lipschitz or H\"older functions are obtained, Theorem \ref{thm:main} reveals that generic continuous functions have ``small-entropy'' maximization for many general non-uniformly hyperbolic systems, which parallels results in \cite{STY} for non-Markov shifts and partially generalizes of Morris's \cite{Morris2} result for systems with Bowen's specification property. 

As generic continuous functions have unique maximizing measure (see Proposition \ref{thm:con}), writing 
$$
\mathcal{U}_H:=\{\ph\in C(X,\RR): h_\mu(F)\le H \ \text{for the unique}\  \mu\in \mathcal{M}_{\max}(\ph)\}
$$ 
for all $H\geq 0$, we get the following immediately.  
\begin{thma} \label{coro:uniquedenseGdelta} 
 Under the setting of Theorem \ref{thm:main}, $\mathcal{U}_{H}$ is dense $G_\delta$ in $C(X,\RR)$ for all $H\in [H^{\perp},h)$.
\end{thma}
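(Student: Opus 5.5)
The plan is to obtain $\mathcal{U}_H$ as the intersection of two residual sets, namely $\mathcal{R}_H$ and the set of functions with a unique maximizing measure. Let
$$
\mathcal{E}:=\{\ph\in C(X,\RR): \mathcal{M}_{\max}(\ph) \text{ is a singleton}\}.
$$
By Proposition \ref{thm:con}, $\mathcal{E}$ is a dense $G_\delta$ subset of $C(X,\RR)$. By Theorem \ref{thm:main}, for every $H\in[H^{\perp},h)$ the set $\mathcal{R}_H$ is also a dense $G_\delta$ subset. Directly from the definitions, $\mathcal{U}_H=\mathcal{R}_H\cap\mathcal{E}$. Indeed, $\ph\in\mathcal{U}_H$ means that $\mathcal{M}_{\max}(\ph)=\{\mu\}$ with $h_\mu(F)\le H$. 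This says exactly that $\ph\in\mathcal{E}$ and that every maximizing measure of $\ph$ has entropy at most $H$.

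A countable intersection of $G_\delta$ sets is $G_\delta$, so $\mathcal{R}_H\cap\mathcal{E}$ is a $G_\delta$ set. For density I would invoke the Baire category theorem. The space $C(X,\RR)$ with the $C^0$-norm is a Banach space, hence a complete metric space. In such a space the intersection of two dense $G_\delta$ sets is again dense, being a countable intersection of dense open sets. Hence $\mathcal{U}_H$ is dense $G_\delta$ for every $H\in[H^{\perp},h)$.

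If Proposition \ref{thm:con} were only stated as genericity of $\mathcal{E}$, the $G_\delta$ property can be checked by hand. Write $\mathcal{E}=\bigcap_n E_n$, where $E_n$ is the set of $\ph$ with $\mathrm{diam}\,\mathcal{M}_{\max}(\ph)<1/n$ in a metric for the weak-* topology. Each $E_n$ is open by upper semicontinuity of $\ph\mapsto\mathcal{M}_{\max}(\ph)$, which follows from compactness of $\mathcal{M}_F(X)$. The hard parts of this argument all sit inside Theorem \ref{thm:main}; the present step itself has no real obstacle.
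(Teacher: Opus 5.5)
Your proposal is correct and matches the paper's argument. The paper deduces this result ``immediately'' from Theorem \ref{thm:main} and Proposition \ref{thm:con}: it writes $\mathcal{U}_H=\mathcal{R}_H\cap U(C(X,\RR))$ and intersects two dense $G_\delta$ sets in the Baire space $C(X,\RR)$, exactly as you do. Your by-hand check of the $G_\delta$ property is a harmless extra, since Proposition \ref{thm:con} already provides it.
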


The next theorem demonstrates continuous functions with maximizing measures of ``relatively large'' entropy are not isolated, but persistent and abundant. Moreover, such continuous functions' optimization is complex, characterized by uncountably many ergodic measures. 
\begin{thma} \label{thm:pathelogic}
    Suppose that $h_{\exp}^{\perp}<h$, and $X\times\mathbb{R}^+$ admits a decomposition $(\mathcal{P},\mathcal{G},\mathcal{S})$ as in Theorem \ref{thm:main}. Assume also that every Lipschitz function in $C(X,\mathbb{R})$ satisfies Bowen property along $\mathcal{G}$. Then for any $H\in [H^{\perp},h)$, $\ph\in C(X,\RR)\setminus \mathcal{R}_H$ and any neighborhood $U$ of $\ph$ in $C(X,\RR)$, there exists $\psi\in U$ such that the set $\{\mu\in \mathcal{M}_{\max}(\psi): h_\mu(F)>H\}$ contains uncountably many ergodic measures.
\end{thma}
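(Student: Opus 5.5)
We follow the strategy of \cite{STY}, adapted to flows through the Climenhaga--Thompson machinery. Fix $H\in[H^{\perp},h)$, $\ph\notin\mathcal{R}_H$ and a neighborhood $U$ of $\ph$.

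\textbf{Step 1: find a high-entropy maximizing measure near $\ph$.} Since $\ph\notin\mathcal{R}_H$, there is $\mu\in\mathcal{M}_{\max}(\ph)$ with $h_\mu(F)>H$. The set $\mathcal{M}_{\max}(\ph)$ is a face of $\mathcal{M}_F(X)$, and $h_\mu(F)$ is the integral of the entropies of the ergodic components. Hence we may replace $\mu$ by an ergodic $\mu\in\mathcal{M}_{\max}(\ph)$ with $h_\mu(F)>H$. Because $H\ge h_{\exp}^{\perp}$, the measure $\mu$ is almost expansive, so the entropy map is upper semicontinuous at measures with entropy above $H$.

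\textbf{Step 2: perturb to a Lipschitz potential.} Approximate $\ph$ in $C^0$ by a Lipschitz function $\ph_0$, and subtract a small continuous function that vanishes on $\supp\mu$ and is positive elsewhere. This can be arranged so that $\mu$ is close to maximizing for $\ph_0$. By hypothesis $\ph_0$, and hence $\alpha\ph_0$ for every $\alpha$, has the Bowen property along $\mathcal{G}$. The Climenhaga--Thompson theorem for flows, used under $h_{\exp}^{\perp}<h$, the specification hypothesis and the pressure gap condition, then gives a unique equilibrium state $\mu_\alpha$ for $\alpha\ph_0$ whenever the pressure gap $P([\mathcal{P}]\cup[\mathcal{S}],\alpha\ph_0)<P(\alpha\ph_0)$ holds. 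Each such $\mu_\alpha$ is ergodic and fully supported.

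\textbf{Step 3 (main obstacle): continuum of distinct high-entropy ergodic measures.} We want a family $\{\nu_s\}_{s\in I}$, with $I$ an interval, of distinct ergodic measures with $h_{\nu_s}>H$, all nearly maximizing for $\ph$. The difficulty is that the pressure gap is only guaranteed for bounded $\alpha$, and the entropy can drop as $\alpha\to\infty$. To handle this, take $\nu_s$ to be the unique equilibrium state of $s\ph_0$ for $s$ in a small interval $[\alpha_1,\alpha_2]$. The pressure gap persists there because it holds at $s=\alpha_1$ and is open in $s$. Distinctness follows from strict convexity of pressure: if the $\nu_s$ were not distinct, $\int\ph_0\,d\nu_s$ would be constant, forcing $\ph_0$ cohomologous to a constant, which can be avoided by a further small perturbation. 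Entropy stays above $H$ because $h_{\nu_s}$ is close to $h_{\mu}$ for suitable parameters, by upper semicontinuity and the variational principle. We then take $\psi$ from Step 4 and choose the family inside its maximizing set directly.

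\textbf{Step 4: realize them as maximizing measures of one $\psi$.} Following \cite{STY}, use the Jenkinson / Bishop--Phelps/Israel type lemma. For any compact convex set $K$ of invariant measures that is exposed, in the sense of being the maximizing set of some continuous function, and close to $\mu$ in the weak-* sense, there is a $C^0$-small perturbation $\psi$ of $\ph$ with $\mathcal{M}_{\max}(\psi)\supseteq K$. Concretely, we build $\psi=\ph-\mathrm{dist}(\cdot,Z)$-type corrections, where $Z$ is a compact invariant set on which the relevant measures live. Since fully supported measures obstruct this, an alternative is to use the entropy-dense horseshoe-like sets provided by specification on $\mathcal{G}$. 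We would construct a compact invariant subset $Z$, near $\supp\mu$ in the sense of measures, such that $(Z,F)$ carries a specification-type structure with $h(Z)>H$. Then $\psi=\ph_0-c\,d(\cdot,Z)$ with $\ph_0|_Z$ made cohomologous to a constant on $Z$, via a Tietze-type extension argument. This makes every invariant measure on $Z$ maximizing, and $Z$ supports uncountably many ergodic measures of entropy $>H$ by the variational principle and the specification property restricted to $Z$. Building $Z$ with $h(Z)>H$ while keeping $\psi\in U$ is the step I expect to be hardest. I would attempt it by concatenating $\mathcal{G}$-orbit segments generic for $\mu$ using weak controlled specification, with the gap condition $\liminf h^{\mathcal{G}}_\eta(t)/\log t=0$ ensuring that transition times are negligible for both entropy and Birkhoff averages.
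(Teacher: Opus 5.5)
\textbf{The proposal has a genuine gap.} The step the theorem is really about is never carried out: you do not produce uncountably many \emph{distinct} ergodic $\psi$-maximizing measures of entropy $>H$.

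\emph{Step~3 does not work as written.}
\begin{itemize}
\item For an arbitrary Lipschitz $\ph_0$ you have neither $P([\PPP]\cup[\SSS],s\ph_0)<P(s\ph_0)$ nor $P^{\perp}_{\exp}(s\ph_0)<P(s\ph_0)$ at any $s$ large enough for the equilibrium states to be nearly $\ph$-maximizing. Openness in $s$ only propagates a gap you never established. For small $s$, where the gap does hold, the equilibrium states lie near the measure of maximal entropy, not near $\mu$.
\item Nothing keeps their entropy above $H$.
\item Constancy of $\int\ph_0\,d\nu_s$ on an interval only means the pressure is affine there. Without analyticity of pressure, which is unavailable here, this does not force $\ph_0$ to be cohomologous to a constant.
\item Two smaller points: full support of these equilibrium states is not known in this generality, and Climenhaga--Thompson requires weak specification, whereas only weak controlled specification is assumed. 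That is why the paper invokes Theorem~\ref{thm:WW1}.
\end{itemize}

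\emph{Step~4 replaces Step~3, but its uncountability claim is unsupported.} The variational principle gives only \emph{one} ergodic measure on $Z$ with entropy $>H$. Moreover $Z$, being the orbit closure of shadowing points that live in $X$, inherits no specification property of its own. The formula $\psi=\ph_0-c\,d(\cdot,Z)$ also fails: $c$ must be large to make measures concentrated near $Z$ lose, which destroys $C^0$-closeness, and for small $c$ nothing forces $\Lambda_F(\psi)$ to be attained on $Z$. This last point can be repaired by a Hahn--Banach/minimax argument, but you do not give one.

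\textbf{The missing idea.} Use your $Z$ (the paper's $Y_1$) not as the maximizing set but to define the potential $\ph_1=-d(\cdot,Y_1)$. The paper builds $Y_1$ essentially as you suggest, with every invariant measure on $Y_1$ in a prescribed weak$^*$ neighborhood of $\mu$ and $H_0<h(Y_1)<h$. Since $\ph_1\le 0$ and $\ph_1$ vanishes on $Y_1$, for \emph{every} $\alpha\ge 0$ one has
\[
P(\alpha\ph_1)\ge h(Y_1)>\max\{h([\PPP]\cup[\SSS]),h^{\perp}_{\exp}(\eps_0)\}\ge\max\{P([\PPP]\cup[\SSS],\alpha\ph_1),P^{\perp}_{\exp}(\alpha\ph_1,\eps_0)\}.
\]
With the Bowen property of Lipschitz functions along $\mathcal{G}$, Theorem~\ref{thm:WW1} then gives unique, hence ergodic, equilibrium states $\mu_{\alpha\ph_1}$. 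These depend continuously on $\alpha$, accumulate on $\mathcal{M}_{\max}(\ph_1)=\mathcal{M}_F(Y_1)$ with entropies tending to $h(Y_1)>H$, and are injective on some interval $(\alpha_1,\alpha_2)$ by a convexity analysis of $\alpha\mapsto P(\alpha\ph_1)$. This is Theorem~\ref{thm:groundstates}.

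\textbf{How the paper concludes.}
\begin{itemize}
\item Push normalized Lebesgue measure on $[\alpha_1,\alpha_2]$ forward to a nonatomic measure $\hat m$ on these ergodic measures. Its barycenter $\mu_0$ is $\eps^2$-nearly $\ph$-maximizing.
\item Theorem~\ref{thm:tangentapproximation} gives $\psi$ with $\|\psi-\ph\|_{C^0}\le\eps$ and a $\psi$-maximizing $\mu$ with $\|\mu-\mu_0\|\le\eps$.
\item The isometry of the barycentre map forces $b_{\mu_0}(\supp(b_\mu))\ge 1-2\eps$.
\item Hence $\supp(b_\mu)\subset\mathcal{M}_{\max}(\psi)$ contains uncountably many of the $\mu_{\alpha\ph_1}$.
\end{itemize}
This route never needs all of $\mathcal{M}_F(Y_1)$ to be $\psi$-maximizing, nor any specification inside $Y_1$.
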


The readers may refer to \cite[Theorem A]{Shi18} and \cite[Theorem B]{STY} for the symbolic analog of Theorem \ref{thm:pathelogic}. In the proof of Theorem \ref{thm:pathelogic}, we use convex analysis approximation results to get desired continuous function and its corresponding maximizing measure. Since low temperature equilibrium states almost maximize the potentials, maximizing measure can be found in a small neighborhood of such equilibrium state. Abundance of equilibrium states allows to locate uncountably many  maximizing measures. The general technical result leading towards Theorem \ref{thm:pathelogic} has spirit in the following sense.

\begin{thma}(Ground States) \label{thm:groundstates}
Let $(X,F)$ be as in Theorem \ref{thm:pathelogic}. Then for each $\mu\in \mathcal{M}_F^e(X)$ with $h_\mu(F) > H^{\perp}$, the following statements hold: For any constant $H_0\in (H^{\perp},h_{\mu}(F))$, any open subset $U$ of $\mathcal{M}_F(X)$ that contains $\mu$, there exists a Lipschitz
continuous function $\ph: X\to \RR$ such that:
\begin{enumerate}
    \item For any $\alpha\ge 0$, there exists a unique equilibrium state for the potential $\alpha\ph$,  denoted by $\mu_{\alpha \ph}$.
\item  The map $\alpha\mapsto \mu_{\alpha \ph}\in \mathcal{M}_F(X)$ is continuous.
\item For all sufficiently large $\alpha>0$ we have $\mu_{\alpha \ph}\in U$, and 
$$\lim_{\alpha\to \infty}h_{\mu_{\alpha \ph}}(F) > H_0.$$
\item There exists a pair of constants $0<\alpha_1<\alpha_2<\infty$ such that the map $\alpha\mapsto \mu_{\alpha \ph}$
is injective on $(\alpha_1,\alpha_2)$, and $h_{\mu_{\alpha \ph}}(F)>H_0$ for all $\alpha\in (\alpha_1,\alpha_2)$.
\end{enumerate} 
\end{thma}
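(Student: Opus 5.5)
Given $\mu$ ergodic with $h_\mu(F)>H^{\perp}$, $H_0\in(H^\perp,h_\mu(F))$ and a weak-* open $U\ni\mu$, I would build $\ph$ as a Lipschitz function that is maximized, up to a small error, exactly near $\mu$. Concretely, shrink $U$ to a basic neighborhood $\{\nu:|\int g_i\,d\nu-\int g_i\,d\mu|<\eps,\ i=1,\dots,k\}$ with each $g_i$ Lipschitz (Lipschitz functions are dense in $C(X,\RR)$). Then, following the convex-analysis approach of \cite{STY}, take $\ph$ to be a Lipschitz approximation of a supporting functional: the entropy map is upper semicontinuous under \ref{E1}/\ref{E2} on measures of entropy $>h_{\exp}^\perp$, so the set $K=\{\nu: h_\nu(F)\ge H_0'\}$ (with $H_0<H_0'<h_\mu(F)$) is handled through the upper-semicontinuous concave function $\nu\mapsto h_\nu(F)$ near $\mu$. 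The Israel–Phelps/Jenkinson-type approximation lemma yields a Lipschitz $\ph$ and an ergodic $\tilde\mu\in U$ with $h_{\tilde\mu}(F)>H_0'$ that is a tangent (equilibrium) measure for $\ph$ and also lies, together with every maximizing measure of $\ph$, in $U\cap K$. I would simply invoke this approximation step; it is the main technical input, and it is where uniqueness must be arranged.

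For (1) and (2): since $\ph$ is Lipschitz, by hypothesis it has the Bowen property along $\mathcal G$, and so does $\alpha\ph$ for every $\alpha\ge0$. The Climenhaga–Thompson criterion for flows then gives a unique equilibrium state $\mu_{\alpha\ph}$, provided the pressure gap holds: $P([\PPP]\cup[\SSS],\alpha\ph)+$ (obstruction to expansivity term) $<P(\alpha\ph)$. This is automatic for $\alpha$ in any bounded range only if $\ph$ has small oscillation, so I would renormalize $\ph$ with a small coefficient and absorb it: more precisely, I would choose $\ph$ so that its maximizing measures have entropy $>H^\perp$. Then $P(\alpha\ph)\ge \alpha\Lambda_F(\ph)+h_{\tilde\mu}$ while $P([\PPP]\cup[\SSS],\alpha\ph)\le \alpha\Lambda_F(\ph)+H^\perp$ up to a $o(\alpha)$-free error, giving the gap for all $\alpha\ge0$. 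This uniform gap is the step I expect to be the main obstacle, since the bound on the bad-part pressure has to be linear in $\alpha$ with slope at most $\Lambda_F(\ph)$, which requires controlling $\sup\int\ph$ over measures essentially carried by $[\PPP]\cup[\SSS]$ and by the non-expansive set; I would first try bounding those pressures via the variational principle restricted to measures of entropy $\le H^\perp$. Continuity of $\alpha\mapsto\mu_{\alpha\ph}$ then follows by uniqueness: any weak-* limit of $\mu_{\alpha_n\ph}$ is an equilibrium state for $\alpha\ph$, using upper semicontinuity of entropy at high-entropy measures and continuity of pressure.

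For (3): accumulation points of $\mu_{\alpha\ph}$ as $\alpha\to\infty$ are maximizing measures of $\ph$ with maximal entropy among them, hence lie in $U\cap K$; so $\mu_{\alpha\ph}\in U$ for large $\alpha$. Moreover $\alpha\mapsto h_{\mu_{\alpha\ph}}$ is monotone nonincreasing (derivative of pressure is $\int\ph\,d\mu_{\alpha\ph}$, nondecreasing), so the limit exists and by upper semicontinuity is at least $\sup\{h_\nu:\nu\in\mathcal M_{\max}(\ph)\}\ge h_{\tilde\mu}>H_0$. For (4): $h_{\mu_{\alpha\ph}}>H_0$ for all large $\alpha$ by monotonicity. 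If $\alpha\mapsto\mu_{\alpha\ph}$ were constant on every interval of large $\alpha$, then $\int\ph\,d\mu_{\alpha\ph}$ would be constant, so $\mu_{\alpha\ph}=\nu$ maximizing, i.e.\ $\ph$ cohomologous-in-mean to a constant on equilibrium states; pick $\ph$ nonconstant on such (perturb by a small Lipschitz term within the approximation step). Strict convexity of $\alpha\mapsto P(\alpha\ph)$ on some interval $(\alpha_1,\alpha_2)$ then gives distinct $\int\ph\,d\mu_{\alpha\ph}$, hence injectivity.
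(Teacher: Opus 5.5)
Your proposal has a genuine gap, and it is the step you chose to black-box: the construction of $\ph$. Once a suitable $\ph$ exists, the rest of (1)--(3) is comparatively routine.

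\textbf{The construction of $\ph$.} No Israel--Phelps or Jenkinson-type result gives what you ask for.
\begin{itemize}
\item Theorem \ref{thm:tangentapproximation} and Lemma \ref{thm:jen} produce merely continuous functions. They say nothing about the entropy of the resulting maximizers.
\item You cannot repair regularity afterwards by a Lipschitz approximation. The map $\ph\mapsto\mathcal{M}_{\max}(\ph)$ is only upper semicontinuous, so a $C^0$-close Lipschitz function need not have any high-entropy maximizer.
\item Without Lipschitz regularity you lose the Bowen property along $\GGG$, and with it Theorem \ref{thm:WW1}.
\item In the paper, the Israel--Phelps step is used only in the proof of Theorem \ref{thm:pathelogic}, which takes Theorem \ref{thm:groundstates} as input.
\end{itemize}
Your requirement that all maximizers lie in $K=\{h_\nu\ge H_0'\}$ is also stronger than needed and typically false. $\mathcal{M}_{\max}(\ph)$ is a closed face of $\mathcal{M}_F(X)$ that usually contains low-entropy measures. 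By Proposition \ref{prop:zerotemperature}, only the maximizers of maximal entropy matter.

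\textbf{The missing idea} is the paper's explicit construction.
\begin{itemize}
\item Start from the separated family of good cores in $\GGG^1$ extracted from $\mu$-typical orbits (Lemma \ref{lem:shorttailentropy}), whose Birkhoff averages are controlled by \eqref{eq:givariationforG}.
\item Keep about $e^{c_kh_{1/2}}$ of these segments, where $h_{1/2}=(H_0+h_\mu(F))/2$.
\item Glue arbitrary bi-infinite sequences of them using Lemma \ref{lem:infinitespecification}.
\end{itemize}
This yields a compact invariant set $Y_1$ whose ergodic measures all lie in $U$ (Lemma \ref{lem:measureY1inU}), with $H_0<h(Y_1)<h$ (Proposition \ref{prop:entropyofY1}). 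Then $\ph=-d(\cdot,Y_1)$ is Lipschitz, $\ph\le 0=\Lambda_F(\ph)$, and $\mathcal{M}_{\max}(\alpha\ph)=\mathcal{M}_F(Y_1)$ for $\alpha>0$. This gives, for every $\alpha\ge0$,
\begin{itemize}
\item $P([\PPP]\cup[\SSS],\alpha\ph)\le H^\perp<h(Y_1)\le P(\alpha\ph)$, and
\item $P^\perp_{\exp}(\alpha\ph,\eps_0)\le h^\perp_{\exp}(\eps_0)<h(Y_1)$.
\end{itemize}
It also identifies the ground states with measures of maximal entropy on $Y_1$, whose entropy $h(Y_1)$ exceeds $H_0$.

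\textbf{Two smaller points.} First, the uniform pressure gap you single out as the main obstacle is actually easy once some maximizer $\tilde\mu$ has entropy above $\max\{H^\perp,h^\perp_{\exp}(\eps)\}$. By subadditivity, $\lim_{t\to\infty}\sup_x\frac1t\int_0^t\ph(f_sx)\,ds=\Lambda_F(\ph)$. Hence $P(\mathcal{C},\alpha\ph)\le h(\mathcal{C})+\alpha\Lambda_F(\ph)$ for every collection $\mathcal{C}$ and every $\alpha\ge0$. No variational principle on $[\PPP]\cup[\SSS]$ is needed, and it is not an invariant set anyway. In the paper, the normalization $\ph\le0=\Lambda_F(\ph)$ makes this trivial. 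Your monotonicity argument for $\alpha\mapsto h_{\mu_{\alpha\ph}}(F)$ is correct.

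Second, for (4) your dichotomy is not exhaustive. Failing to be injective on every interval does not force $\mu_{\alpha\ph}$ to be eventually constant. A $C^1$ convex $P(\alpha)=P(\alpha\ph)$ can be affine on a dense family of intervals, with $P'$ a devil's-staircase function, while being neither strictly convex on any interval nor affine on a tail. ``Perturbing $\ph$ within the approximation step'' is not an argument. The paper argues (4) by a case analysis on the convexity of $P$. It uses $P'(\alpha)=\int\ph\,d\mu_{\alpha\ph}$ and, in the eventually affine case, that $\alpha_0>0$ because $h(Y_1)<h$. Be aware that the staircase scenario must be excluded there too, and $C^1$-smoothness alone does not do it. Some additional input is needed in either approach, for example Gibbs-type estimates along $\GGG$.
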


\subsection{Application to geodesic flows}

As an application, we consider the geodesic flow over a closed rank one manifold of nonpositive curvature. 

Let $M$ be a closed rank one manifold of nonpositive curvature and $SM$ its unit tangent bundle. Let $G=\{g^t\}_{t\in \RR}:SM\to SM$ denote the geodesic flow. By Knieper \cite{Kn2} and Burns-Climenhaga-Fisher-Thompson \cite{BCFT}, we have $h_{\text{top}}(\text{Sing})<h_{\text{top}}(G)$, where $\text{Sing}$ denotes the singular set of the geodesic flow.

\begin{thma}\label{thm:bcft}
Let $M$ be a closed rank one manifold of nonpositive curvature, and $G=\{g^t\}_{t\in \RR}:SM\to SM$ the geodesic flow. Let $H\in [h_{\text{top}}(\text{Sing}), h_{\text{top}}(G))$. Then the following genericity results hold: 
\begin{enumerate}
\item The set $\mathcal{R}_H:=\{\ph\in C(SM,\RR): h_\mu(G)\le H \ \text{for all}\  \mu\in \mathcal{M}_{\max}(\ph)\}$ is dense $G_\delta$ in $C(SM,\RR)$.
\item For any $\ph\in C(SM,\RR)\setminus \mathcal{R}_H$ and any neighborhood $U$ of $\ph$ in $C(SM,\RR)$, there exists $\psi\in U$ such that the set 
$\{\mu\in \mathcal{M}_{\max}(\psi)\}$ contains uncountably many, fully supported ergodic measures.
\end{enumerate}
Furthermore, let $\mu\in \mathcal{M}_G^e(SM)$ with $h_\mu(G) > H$. Then for any open subset $U$ of $\mathcal{M}_G(SM)$ that contains $\mu$, there exists a Lipschitz continuous function $\ph: SM\to \RR$ such that:
\begin{enumerate}
    \item For any $\beta\ge 0$, there exists a unique equilibrium state for the potential $\beta\ph$, which has the weak Gibbs property, denoted by $\mu_{\beta \ph}$.
\item  The map $\beta\mapsto \mu_{\beta \ph}\in \mathcal{M}_G(SM)$ is continuous and injective.
\item For all sufficiently large $\beta>0$ we have $\mu_{\beta \ph}\in U$, and there exists $H_0 > H$ such that $\lim_{\beta\to \infty}h_{\mu_{\beta \ph}}(F) = H_0$.
\end{enumerate}
\end{thma}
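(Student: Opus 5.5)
The plan is to derive Theorem \ref{thm:bcft} from the general results, Theorems \ref{thm:main}, \ref{thm:pathelogic} and \ref{thm:groundstates}. This requires checking their hypotheses for the geodesic flow using the machinery of Burns--Climenhaga--Fisher--Thompson \cite{BCFT}.

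\emph{Expansivity and the decomposition.} The geodesic flow has no fixed points. Following \cite{BCFT}, non-expansive points lie in $\text{Sing}$, so $h_{\exp}^{\perp}\le h_{\text{top}}(\text{Sing})<h_{\text{top}}(G)$ by Knieper. Hence $G$ is of type \ref{E1} or \ref{E2}, and in either case the obstruction term is at most $h_{\text{top}}(\text{Sing})$.

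For the decomposition we use the \cite{BCFT} construction with the function $\lambda(v)=\min$ of the stable and unstable curvature eigenvalues. Fix $\eta>0$. Let $\mathcal{G}(\eta)$ consist of orbit segments $(v,t)$ with $\int_0^\tau\lambda(g^sv)ds\ge\eta\tau$ and $\int_0^\tau\lambda(g^{t-s}v)ds\ge \eta\tau$ for all $\tau\in[0,t]$. Let $\mathcal{P}=\mathcal{S}$ consist of segments with $\int_0^t\lambda<\eta t$. Then \cite{BCFT} gives:
\begin{enumerate}
\item specification for $\mathcal{G}(\eta)$ at every scale, with bounded gap, so the gap condition $\liminf h^{\mathcal{G}}_\eta(t)/\log t=0$ holds trivially;
\item the Bowen property along $\mathcal{G}(\eta)$ for H\"older, in particular Lipschitz, potentials;
\item $h([\mathcal{P}]\cup[\mathcal{S}])\to h_{\text{top}}(\text{Sing})$ as $\eta\to0$.
\end{enumerate}

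\emph{The main obstacle.} The hard step is arranging $H^{\perp}\le H$ for a \emph{given} $H\ge h_{\text{top}}(\text{Sing})$, since item (3) only gives a limit. I would take a sequence $\eta_n\to0$ with $H^{\perp}_n\downarrow h_{\text{top}}(\text{Sing})$.

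If $H>h_{\text{top}}(\text{Sing})$, choose $n$ with $H^{\perp}_n\le H$ and apply Theorem \ref{thm:main} to get (1).

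If $H=h_{\text{top}}(\text{Sing})$, write $\mathcal{R}_H=\bigcap_n\mathcal{R}_{H+1/n}$. Each set in the intersection is dense $G_\delta$, so the countable intersection is dense $G_\delta$ by Baire. This requires checking that the sets $\mathcal{R}_H$ decrease in $H$, which is immediate.

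Part (2) is handled the same way. For $\varphi\notin\mathcal{R}_H$ choose $H'>H$, close to $H$, with $\varphi\notin\mathcal{R}_{H'}$ and $H'\ge H^{\perp}_n$. Such an $H'$ exists because $\mathcal{R}_H=\bigcap\mathcal{R}_{H+1/n}$. Then apply Theorem \ref{thm:pathelogic}.

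\emph{Full support.} The uncountably many ergodic maximizing measures come from the equilibrium states $\mu_{\alpha\ph}$ of Theorem \ref{thm:groundstates}. By \cite{BCFT} these unique equilibrium states are weak Gibbs on $\mathcal{G}$, and therefore charge every open set, so they are fully supported. I would check that the proof of Theorem \ref{thm:pathelogic} produces the maximizing measures as such equilibrium states, or as limits in the $\mathcal{G}$-good part that inherit the lower Gibbs bound, and hence full support.

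\emph{The last statement.} This is Theorem \ref{thm:groundstates} with $H_0\in(H,h_\mu(G))$ and $\eta$ chosen so that $H^{\perp}<H_0$; the weak Gibbs property is supplied by \cite{BCFT}. Two upgrades over Theorem \ref{thm:groundstates} remain:
\begin{itemize}
\item \emph{Injectivity on all of $[0,\infty)$.} For Lipschitz $\ph$ not cohomologous to a constant, $\beta\mapsto\int\ph\,d\mu_{\beta\ph}$ is the derivative of the strictly convex pressure function. Distinct $\beta$ then give distinct integrals and hence distinct measures. If $\ph$ were cohomologous to a constant we would perturb it, since it is constructed freely.
\item \emph{Existence of the limit.} The entropy $h_{\mu_{\beta\ph}}$ converges as $\beta\to\infty$ by monotonicity of $\int\ph\,d\mu_{\beta\ph}$ together with upper semicontinuity of entropy, which holds because $h_{\exp}^\perp<h$. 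Its value exceeds $H$ by item (3) of Theorem \ref{thm:groundstates}.
\end{itemize}
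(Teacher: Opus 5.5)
Your overall route is the paper's. You verify the hypotheses of Theorems \ref{thm:main}, \ref{thm:pathelogic} and \ref{thm:groundstates} through \cite{BCFT}: the entropy gap for $\text{Sing}$, $h^{\perp}_{\exp}\le h_{\text{top}}(\text{Sing})$, the entropy gap for $[\mathcal{B}(\eta)]$, and the Bowen property of Lipschitz functions on $\mathcal{G}(\eta)$. You then take full support from Theorem \ref{bcftthm}. Your handling of the $\eta$-dependence of $H^{\perp}$ (a sequence $\eta_n\to 0$, and $\mathcal{R}_H=\bigcap_n\mathcal{R}_{H+1/n}$ at the endpoint) is more careful than the paper. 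The paper fixes one small $\eta$ and never addresses $H\in[h_{\text{top}}(\text{Sing}),H^{\perp})$.

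The full-support step needs no hedging. In the proof of Theorem \ref{thm:pathelogic}, $b_{\mu_0}$ is the push-forward of normalized Lebesgue measure on $[\alpha_1,\alpha_2]$, and $b_{\mu_0}(\supp(b_\mu))\ge 1-2\eps$. So uncountably many elements of $\supp(b_\mu)\subset\mathcal{M}_{\max}(\psi)$ are literally equilibrium states $\mu_{\alpha\psi_0}$. These are fully supported by Theorem \ref{bcftthm}, since $P(\text{Sing},\alpha\psi_0)\le h_{\text{top}}(\text{Sing})<h(Y_1)\le P(\alpha\psi_0)$. The weak Gibbs property on $\mathcal{G}$ alone would not give "charges every open set" directly.

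The limit of entropies also needs no semicontinuity. For $\beta_1<\beta_2$, the two variational inequalities give $h_{\mu_{\beta_1\ph}}-h_{\mu_{\beta_2\ph}}\ge\beta_1\bigl(\int\ph\,d\mu_{\beta_2\ph}-\int\ph\,d\mu_{\beta_1\ph}\bigr)\ge 0$. Proposition \ref{prop:zerotemperature} then identifies the limit as $h(Y_1)>H$, because $\mathcal{M}_{\max}(\ph)=\mathcal{M}_G(Y_1)$.

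The genuine gap is injectivity of $\beta\mapsto\mu_{\beta\ph}$ on all of $[0,\infty)$. Your reduction is fine: by uniqueness, $\mu_{\beta_1\ph}=\mu_{\beta_2\ph}$ if and only if $\beta\mapsto P(\beta\ph)$ is affine on $[\beta_1,\beta_2]$. The problem is the claim that pressure is strictly convex whenever $\ph$ is not cohomologous to a constant. That is a uniformly hyperbolic fact, resting on analyticity of pressure and positive asymptotic variance via coding or a spectral gap. Nothing of that kind is provided by \cite{BCFT} or by this paper. It is also false for general non-uniformly hyperbolic systems even when equilibrium states are unique. For the Manneville--Pomeau map with $\ph=-\log|f'|$, one has $P(\beta\ph)\equiv 0$ for $\beta>1$ with unique equilibrium state $\delta_0$, although $\ph$ is not cohomologous to a constant. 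Your fallback of perturbing $\ph$ is moot: $\ph=-d(\cdot,Y_1)$ is dictated by the construction and is never cohomologous to a constant.

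The paper's proof does not supply this step either. It only invokes Theorem \ref{thm:groundstates}, whose item (4) gives injectivity on some interval $(\alpha_1,\alpha_2)$, and that is all part (2) uses. To get injectivity everywhere you must rule out a common equilibrium state $\nu$ for $\beta_1\ph$ and $\beta_2\ph$. One possible route is to compare the two weak Gibbs estimates for $\nu$ on $\mathcal{G}$. This yields $|\Phi(x,t)-t\int\ph\,d\nu|\le C$ for all long $(x,t)\in\mathcal{G}$, while $\int\ph\,d\nu<0$ since $\nu$ is fully supported and $\ph<0$ off $Y_1$. You would then need arbitrarily long $\mathcal{G}$-segments starting in $Y_1$, where $\Phi\equiv 0$; these might come from a regular ergodic measure on $Y_1$ and a Pliss-type argument. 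That additional argument is absent from your proposal.
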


Combining with Proposition \ref{thm:con}, we have
\begin{thma}
 Let $M$ be a closed rank one manifold of nonpositive curvature, and $G=\{g^t\}_{t\in \RR}:SM\to SM$ the geodesic flow. 
 Then for any $H\in [h_{top}(\text{Sing}), h_{\text{top}}(G))$, the set
 $$\mathcal{U}_H:=\{\ph\in C(SM,\RR): h_\mu(G)\le H \ \text{for the unique}\  \mu\in \mathcal{M}_{\max}(\ph)\}$$ 
 is dense $G_\delta$ in $C(SM,\RR)$.   
\end{thma}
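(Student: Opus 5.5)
The statement follows from the geodesic-flow genericity result (Theorem \ref{thm:bcft}, part (1)) together with Proposition \ref{thm:con}. I will show that $\mathcal{U}_H$ is the intersection of two dense $G_\delta$ sets and then apply Baire's theorem in the complete metric space $(C(SM,\RR),\|\cdot\|_{C^0})$.

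First, let $\mathcal{E}:=\{\ph\in C(SM,\RR): \mathcal{M}_{\max}(\ph)\ \text{is a singleton}\}$. Proposition \ref{thm:con} says that this set is dense $G_\delta$; the only hypotheses needed are that $SM$ is a compact metric space and that the geodesic flow $G$ is continuous, and both hold. Second, Theorem \ref{thm:bcft}(1) says that $\mathcal{R}_H$ is dense $G_\delta$ for every $H\in[h_{top}(\text{Sing}),h_{top}(G))$. That range of $H$ is nonempty because $h_{top}(\text{Sing})<h_{top}(G)$, by Knieper and Burns--Climenhaga--Fisher--Thompson.

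Next I check that $\mathcal{U}_H=\mathcal{R}_H\cap\mathcal{E}$, reading $\mathcal{U}_H$ as the set of $\ph$ that have a unique maximizing measure $\mu$ with $h_\mu(G)\le H$.
\begin{itemize}
\item If $\ph\in\mathcal{R}_H\cap\mathcal{E}$, then $\ph$ has a unique maximizing measure, and every maximizing measure has entropy at most $H$. Hence $\ph\in\mathcal{U}_H$.
\item If $\ph\in\mathcal{U}_H$, then $\ph\in\mathcal{E}$. Since $\mathcal{M}_{\max}(\ph)$ consists of a single measure with entropy at most $H$, the condition defining $\mathcal{R}_H$ holds, so $\ph\in\mathcal{R}_H$.
\end{itemize}

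A finite intersection of $G_\delta$ sets is $G_\delta$. A countable intersection of dense open sets in a Baire space is dense. So $\mathcal{R}_H\cap\mathcal{E}$ is dense $G_\delta$, and therefore so is $\mathcal{U}_H$.

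No step is a real obstacle, since the substantial work sits in Theorem \ref{thm:bcft}. That result in turn comes from Theorem \ref{thm:main}, using the BCFT decomposition for rank one geodesic flows, in which $H^\perp$ reduces to $h_{top}(\text{Sing})$. The only point to watch is that the set $\mathcal{U}_H$ be read as above, so that the identity $\mathcal{U}_H=\mathcal{R}_H\cap\mathcal{E}$ is exact rather than merely an inclusion.
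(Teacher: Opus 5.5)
Your proposal is correct and follows essentially the same route as the paper, which obtains this statement by combining Theorem \ref{thm:bcft}(1) ($\mathcal{R}_H$ is dense $G_\delta$) with Proposition \ref{thm:con} (generic uniqueness of the maximizing measure, applied with $E=C(SM,\RR)$, a Baire space). Your identity $\mathcal{U}_H=\mathcal{R}_H\cap\mathcal{E}$ and the Baire-category step that the intersection of two dense $G_\delta$ sets is again dense $G_\delta$ are exactly what that combination requires.
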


We also consider the frame flow over a compact nonpositively curved rank one Riemannian manifold. The frame flow is a group extension of the geodeisc flow, which can be thought as a system with mixture of nonuniform hyperbolicity and partial hyperbolicity. If the manifold satisfies a type of bunched condition on curvature, we are also able to obtain results displayed in Theorem \ref{thm:bcft}. 
\begin{thma} \label{thm:frame} 
Let $M$ be a closed, oriented rank one $n$-manifold with bunched nonpositive curvature. Suppose that the frame flow $F=\{F^t\}_{t\in \RR}:FM\to FM$ is topologically transitive. Let $H\in [h_{\text{top}}(\text{Sing}), h_{\text{top}}(G))$. Then the following genericity results hold:
\begin{enumerate}
\item The set $\mathcal{R}_H:=\{\ph\in C(FM,\RR): h_\mu(F)\le H \ \text{for all}\  \mu\in \mathcal{M}_{\max}(\ph)\}$ is dense $G_\delta$ in $C(FM,\RR)$.
\item For any $\ph\in C(FM,\RR)\setminus \mathcal{R}_H$ and any neighborhood $U$ of $\ph$ in $C(FM,\RR)$, there exists $\psi\in U$ such that the set 
$\{\mu\in \mathcal{M}_{\max}(\psi)\}$ contains uncountably many fully supported ergodic measures.
\end{enumerate}
Furthermore, let $\mu\in \mathcal{M}_F^e(FM)$ with $h_\mu(F) > H$. Then for any open subset $U$ of $\mathcal{M}_F(FM)$ that contains $\mu$, there exists a Lipschitz continuous function $\ph: FM\to \RR$ such that:
\begin{enumerate}
    \item[(3)] For any $\beta\ge 0$, there exists a unique equilibrium state for the potential $\beta\ph$, denoted by $\mu_{\beta \ph}$.
\item[(4)]  The map $\beta\mapsto \mu_{\beta \ph}\in \mathcal{M}_F(FM)$ is continuous and injective.
\item[(5)] For all sufficiently large $\beta>0$ we have $\mu_{\beta \ph}\in U$, and there exists $H_0 > H$ such that $\lim_{\beta\to \infty}h_{\mu_{\beta \ph}}(F) = H_0$.
\end{enumerate}
\end{thma}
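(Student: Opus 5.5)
We reduce Theorem \ref{thm:frame} to the general results, Theorems \ref{thm:main}, \ref{thm:pathelogic} and \ref{thm:groundstates}. This requires checking their hypotheses for the frame flow $F$ on $FM$, with the constant $H^{\perp}$ bounded above by $h_{\text{top}}(\text{Sing})$.

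The natural input is the work on equilibrium states for frame flows over rank one manifolds, in the spirit of Climenhaga--Thompson and \cite{BCFT}. Write $\pi:FM\to SM$ for the projection to the first vector of the frame, so that $F$ is an $SO(n-1)$-extension of $G$. We lift the orbit decomposition of \cite{BCFT} for $G$ to $FM\times\RR^+$ by setting
\[
(\mathcal{P},\mathcal{G},\mathcal{S}) := \pi^{-1}(\mathcal{P}_\eta,\mathcal{G}_\eta,\mathcal{S}_\eta),
\]
where $\mathcal{P}_\eta,\mathcal{G}_\eta,\mathcal{S}_\eta$ are defined through the function $\lambda(v)$ measuring curvature along $v$.

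\textbf{Entropy estimates.} Because $\pi$ has compact-group fibres, it carries no fibre entropy. Hence $h_{\text{top}}(F)=h_{\text{top}}(G)$, and the entropies of $[\mathcal{P}]\cup[\mathcal{S}]$ and the obstruction entropy $h^{\perp}_{\exp}$ are bounded by the corresponding quantities for $G$. Those quantities are in turn bounded by $h_{\text{top}}(\text{Sing})$, after letting $\eta\to0$ as in \cite{BCFT}. Since the frame flow is not entropy expansive in general, we are in case \ref{E2}. This gives $H^{\perp}\le h_{\text{top}}(\text{Sing})<h$, so the ranges of $H$ are compatible.

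\textbf{Weak controlled specification.} This is the main obstacle, since specification for $\mathcal{G}$ at all scales $\eta>0$ requires controlling the fibre (frame) coordinate. Our first attempt is to use the bunching condition, which gives uniform H\"older holonomies along stable and unstable leaves over regular geodesics. Together with topological transitivity of $F$, which makes the Brin group all of $SO(n-1)$, this lets one close up fibre discrepancies in a uniformly bounded transition time using $su$-paths. The resulting gap function is bounded, so $\liminf_{t\to\infty} h^{\mathcal{G}}_\eta(t)/\log t=0$. If only a slowly growing gap can be obtained, that still satisfies the same condition.

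\textbf{Bowen property.} Lipschitz functions have the Bowen property along $\mathcal{G}$. This follows from uniform contraction and expansion along $\mathcal{G}$ in the geodesic direction, combined with the isometric action in the fibre direction.

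\textbf{Conclusion.} Theorem \ref{thm:main} now gives (1). Theorem \ref{thm:pathelogic} gives (2), with full support of the maximizing measures coming from the weak Gibbs property of the equilibrium states on $\mathcal{G}$. Theorem \ref{thm:groundstates} gives (3)--(5).

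For (4) we need injectivity on all of $[0,\infty)$, not only on an interval. Suppose $\mu_{\beta\ph}=\mu_{\beta'\ph}$ with $\beta\neq\beta'$. Then $\int\ph\,d\mu_{\beta\ph}$ maximizes both pressures, which forces $\ph$ to be cohomologous to a constant. We rule this out by choosing $\ph$ not cohomologous to a constant.

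For the limit in (5), we use monotonicity of $\beta\mapsto h_{\mu_{\beta\ph}}(F)$, which follows from convexity of the pressure. The limit is therefore $H_0$, and by Theorem \ref{thm:groundstates}(3) it exceeds $H$.
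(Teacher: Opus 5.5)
There is a genuine gap at the first step of your reduction: the frame flow does \emph{not} satisfy $h^{\perp}_{\exp}<h$. The flow $F^t$ commutes with the right $SO(n-1)$-action on frames, since parallel transport is linear. So for every $x\in FM$, a frame $y$ obtained from $x$ by a small rotation inside its fibre satisfies $d(F^tx,F^ty)<\eps$ for all $t\in\RR$. Thus $\Gamma_\eps(x)$ contains a small fibre ball around $x$, which lies in no orbit segment $f_{[-s,s]}(x)$. Hence $\text{NE}(\eps)=FM$ for every $\eps>0$ and $h^{\perp}_{\exp}=h$.

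Your sentence ``compact fibres carry no entropy, so $h^{\perp}_{\exp}$ is bounded by the corresponding quantity for $G$'' conflates entropy with expansiveness. The isometric fibres contribute no entropy, which is exactly why the frame flow \emph{is} entropy expansive, contrary to your claim. But they destroy expansiveness at every point. Consequences:
\begin{enumerate}
\item Part (1) has to go through case \ref{E1}, with $H^{\perp}=h([\tilde{\mathcal P}]\cup[\tilde{\mathcal S}])=h([\mathcal P]\cup[\mathcal S])$ by Lemma \ref{liftpressure}.
\item Theorems \ref{thm:pathelogic} and \ref{thm:groundstates} cannot be invoked at all. The hypothesis $h^{\perp}_{\exp}<h$ is used substantively there: for uniqueness via Theorem \ref{thm:WW1}, for upper semicontinuity in Lemma \ref{lem:usc}, Lemma \ref{lem:limites} and Proposition \ref{prop:zerotemperature}, and for fixing the scale at which $h(Y_1)$ is computed.
\item Your Bowen-property step fails for the same reason. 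Fibre displacements are preserved, not contracted. So the Birkhoff integrals of a Lipschitz function that varies along fibres need not stay within bounded distance of each other on Bowen balls around $\tilde{\mathcal G}$-segments. Only fibre-constant functions inherit the Bowen property from $SM$.
\end{enumerate}

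This is why the paper does not reduce (2)--(5) to the general theorems but re-runs their proofs with a fibre-constant potential:
\begin{enumerate}
\item It builds $Y_1\subset SM$ for the geodesic flow and sets $\tilde\ph:=-d(\pi(\cdot),Y_1)$.
\item It gets uniqueness of equilibrium states for $\alpha\tilde\ph$ from Theorem \ref{esframeflow}. That result rests on entropy expansiveness and the fixed-scale identity \eqref{eq:Ftscale}, not on $P^{\perp}_{\exp}<P$.
\item It gets the pressure gap and $h(\pi^{-1}Y_1)=h(Y_1)$ from Lemma \ref{liftpressure}.
\item It proves full support via the lower Gibbs bound \eqref{eq:lowergibbs} on $\tilde{\mathcal G}$, followed by \cite[Lemma 6.2, Proposition 6.3]{BCFT}.
\end{enumerate}
The semicontinuity inputs must then come from entropy expansiveness rather than almost expansiveness.

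Separately, your argument for injectivity on all of $[0,\infty)$ in (4) is not valid as written. The implication ``$\mu_{\beta\ph}=\mu_{\beta'\ph}$ forces $\ph$ to be cohomologous to a constant'' is a Liv\v{s}ic-type rigidity fact from uniform hyperbolicity, and nothing in this nonuniform setting supplies it. The difficulty is that implication, not the choice of $\ph$: the distance function is already not cohomologous to a constant. The argument you would be mimicking, in the proof of Theorem \ref{thm:groundstates}(4), only yields injectivity on some interval $(\alpha_1,\alpha_2)$, and it explicitly allows $P(\alpha)$ to become affine for large $\alpha$.
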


The main obstacle to state Theorem \ref{thm:frame} beyond specific examples is that in addition to have enough control of distortion for a Lipschitz function along $\mathcal{G}$, one needs to guarantee that the pressure for such functions to be reflected at some fixed scale. This is in general not always true, even in the entropy-expansive case. In the meantime, though it is generally not known that whether the equilibrium states derived in \cite{CT16} or \cite{WangWu} are fully supported, the arguments in these works provides us with the lower Gibbs property for the Bowen balls associated to orbit segments in $\mathcal{G}$, which in turn indicates that those measures in Theorem \ref{bcftthm}  and Theorem \ref{thm:frame} (2) are indeed fully supported due to the additional dynamical properties brought by their corresponding geometric structures.

\subsection{Some history}
For expanding transformations, Bousch \cite{Bo01} proves that for generic continuous functions the maximizing measure is fully supported. Bousch's theorem in the hyperbolic case is presented by Jenkinson in \cite{Jen06b}. 
Not only does this contrast with typically periodic optimization, but also it contrasts with intuition. Indeed an open problem is to exhibit constructively a continuous function over an expanding or Anosov system such that the unique maximizing measure is fully supported. Br\'emont  \cite{Bre08} shows that typical maximizing measures have zero entropy if the dynamical system has the property that measures supported on periodic orbits are dense in $\mathcal{M}_F(X)$ and if the entropy map is upper semi-continuous.

Morris \cite{Morris2} united the results of Bousch \cite{Bo01}, Jenkinson \cite{Jen06b} and Brémont  \cite{Bre08}, proving that for a system with the Bowen specification property and for a generic continuous function, the maximizing measure is unique, fully supported and has zero entropy. In contrast to Morris’ result, Shinoda \cite{Shi18} proved that for a dense set of continuous functions on a topologically mixing Markov shift, there exist uncountably many, fully supported ergodic maximizing measures with positive entropy. Morris' result \cite{Morris2}  and Shinoda's result \cite{Shi18} are unified and generalized to a wide class including non-Markov shifts in \cite{STY}.

Although rich structure of ergodic optimization has been demonstrated for continuous functions, typicality of periodic optimization can still be expected for more regular observables \cite{Con}. Recently, Huang et al. \cite{HLMXZ} have resolved the TPO conjecture for a broad class of dynamical systems including Axiom A attractors, Anosov diffeomorphisms and uniformly
expanding maps, and for observables with H\"older, Lipschitz and $C^1$ regularity respectively. For results toward a resolution of  TPO conjecture in  various settings, see \cite{Bo01, Con, Jen19,HLMXZ, GS1,GS2} and the references therein.

\subsection{Organization of the paper}
This paper is structured as follows. In Section \ref{Preliminaries}, we recall some fundamental concepts and results from topological dynamics and thermodynamic formalism. Particularly, we state orbit decomposition $(\mathcal{P},\mathcal{G},\mathcal{S})$ and the crucial thermodynamic results of Climenhaga–Thompson regarding  uniqueness of equilibrium states. Generic properties for maximizing measures for flows in the space of continuous functions have also been established. In Section \ref{Proofs for Theorem}, two key lemmas concerning entropy contribution of orbit segments with long bad prefixes/suffixes and remaining good segments are laid out first. The proof of Theorem \ref{thm:main} heavily relies on Proposition \ref{prop:entropysmalldense} where specification is applied on a carefully selected good orbit segment to construct a compact invariant set with low complexity. The proof of Theorem \ref{thm:groundstates} shares similar strategy. A compact invariant subset with desired entropy control (see Proposition \ref{prop:entropyofY1}) is built and the rest follows from thermodynamic results. The proof of Theorem \ref{thm:pathelogic} finalizes Section \ref{Proofs for Theorem}.
In Section \ref{Examples and applications}, we apply these results to geodesic flows and frame flows on rank one manifolds in nonpositive curvature. Technical proofs of some results for flows are provided in the Appendix for self-containedness.

\section{Preliminaries}\label{Preliminaries}

\subsection{Backgrounds on thermodynamical formalism}
Let $F=(f_t)_{t\in \mathbb{R}}$ be a continuous flow on a compact metric space $(X,d)$, and $\mathcal{M}_F(X)$ (resp. $\mathcal{M}_F^e(X))$ denote the set of all invariant (resp. ergodic) probability measures on $X$ under $F$. Given any $t>0$, the $t$-th Bowen metric is defined by 
 $$d_t(x,y)=d_t^F(x,y):=\max_{0\le s\le t}d(f_sx, f_sy), \quad \forall x,y\in X.$$
 The $t$-th Bowen ball is then the ball with respect to $d_t$. Precisely, given $\eps,t>0$, the $t$-th Bowen ball centered at $x\in X$ with radius $\eps$ is defined by
 $$B_t(x,\eps):=\{y\in X: d_t(x,y)<\eps\}.$$

A set $A\subset X$ is called $(t,\eps)$-separated, if for any distinct $x,y\in A$, $d_t(x,y)>\eps$. For any $\eps>0$, let $\Lambda_t(X,\eps)$ be the cardinality of maximally $(t,\eps)$-separated set for $X$. Topological entropy can be defined through exponential growth rate of cardinality of separated sets as follows,
$$
h_{\text{top}}(F) = \lim_{\eps \to 0} \limsup_{t \to \infty} \frac{1}{t} \log \Lambda_t(X, \eps).
$$

\subsubsection{Pressure and orbit decomposition}
Denote by $X\times \mathbb{R}^+$ the space of finite orbit segments for $(X,F)$. For any $\mathcal{C}\subset X\times \mathbb{R}^+$ and $t>0$, we write
\begin{equation*} 
    \mathcal{C}_t:=\{x\in X \ | \ (x,t)\in \mathcal{C}\}.
\end{equation*} 
Given a potential $\ph: X\to \mathbb{R}$ and time $t>0$, write 
$$
\Phi(x,t):=\int_0^t \ph(f_sx)ds.
$$
\begin{definition}
Given $\mathcal{C}\subset X\times \mathbb{R}^+$ and $t,\eps>0$, the (separated) partition function of $\ph$ is given by
$$
\Lambda_t(\mathcal{C},\ph,\eps):=\sup\Big\{\sum_{x\in E}e^{\Phi(x,t)}\ |\ E\subset \mathcal{C}_t \text{ is } (t,\eps)\text{-separated}\Big\}.
$$
The \emph{pressure of $\ph$ over $\mathcal{C}$ at scale $\eps$} is given by
$$
P(\mathcal{C},\ph,\eps)=\limsup_{t\to \infty}\frac{1}{t}\log \Lambda_t(\mathcal{C},\ph,\eps),
$$ 
then we define the \emph{pressure of $\ph$ over $\mathcal{C}$} as
$$
P(\mathcal{C},\ph)=\limsup_{\eps\to 0}P(\mathcal{C},\ph,\eps).
$$ 
\end{definition}
\begin{remark}
\begin{enumerate}
    \item If $\mathcal{C}=X\times \mathbb{R}^+$, we write $P(\varphi,\eps):=P(X\times \mathbb{R}^+,\varphi,\eps)$ and the \emph{pressure of $\ph$} is defined as 
$P(\varphi):=\lim\limits_{\eps\to 0}P(\varphi,\eps).$
\item When $\varphi=0$, we get the topological entropy
$$h(\mathcal{C},\eps):=P(\mathcal{C},0,\eps), \quad h(\mathcal{C}):=P(\mathcal{C},0), \quad h_{\text{top}}(F):= P(0).$$
For any subset $Y\subset X$, we also write $h(Y):=h(Y\times \mathbb{R}^+)$.
\end{enumerate}
\end{remark}

It is clear from the definition of $P(\mathcal{C},\ph)$ that $\forall \delta,\eps>0$, there exists some constant $C_{\delta}(\mathcal{C}, \eps)>0$ such that   
\begin{equation}\label{upper}
\Lambda_t(\mathcal{C},\ph,\eps)<C_{\delta}(\mathcal{C}, \eps)e^{t(P(\mathcal C, \ph)+\delta)}, \quad \forall t>0.   
\end{equation}
This estimation will be used in different scenarios in the rest of the paper.

\begin{definition} \label{def:decomp}
    A \emph{decomposition} $(\mathcal{P},\mathcal{G},\mathcal{S})$ for $\mathcal{D}\subset X\times \mathbb{R}^+$ consists of three collections $\mathcal{P},\mathcal{G},\mathcal{S} \subset X\times \mathbb{R}^+$ and functions $p,g,s:\mathcal{D}\to \mathbb{R}^+$ such that for every $(x,t)\in \mathcal{D}$, writing $p(x,t), g(x,t), s(x,t)$ as $p,g,s$ respectively, we have $t=p+g+s$, and
    $$
(x,p)\in \mathcal{P}, \qquad (f_p(x),g)\in \mathcal{G}, \qquad (f_{p+g}(x),s)\in \mathcal{S}.
    $$
    We always assume that $X\times \{0\}$ belongs to all collections $(\mathcal{P},\mathcal{G},\mathcal{S})$ by default.
\end{definition}

As in \cite[(2.9)]{CT16}, due to the frequent usage of discretization argument throughout the paper, given $\mathcal{C}\subset X\times \mathbb{R}^+$, we often deal with the following slightly larger collection of orbit segments
$$
[\mathcal{C}]:=\{(x,n)\in X\times \mathbb{N} \ | \ (f_{-s_1}(x),n+s_1+s_2)\in \mathcal{C} \ \text{ for some }s_1,s_2\in [0,1] \}.
$$
In this paper, we always assume that $X\times \mathbb{R}^+$ itself admits a decomposition $(\mathcal{P},\mathcal{G},\mathcal{S})$. Then for each element in $X\times \mathbb{R}^+$, by chopping off the head and tail corresponding to $[\mathcal{P}]$ and $[\mathcal{S}]$ respectively, the remainder is denoted by $\mathcal{G}^1$, which is defined as follows
$$
 \mathcal{G}^1:=\{(x,t)\in X\times \mathbb{R}^+ \ | \ (f_{s_1}(x),t-s_1-s_2)\in \mathcal{G} \ \text{ for some }s_1,s_2\in [0,1] \}
$$
\subsubsection{Distortions, specification and expansiveness}

Let $g$ be a positive function on $\mathbb{R}^+$, and $\varphi$ a potential on $X$. For any $\eps>0$ and $\mathcal{C}\in X\times \mathbb{R}^+$, we say $\varphi$ is $g$-distorted over $\mathcal{C}$ at scale $\eps$ if for any $(x,t)\in \mathcal{C}$, we have
$$
|\Phi(x,t)-\Phi(y,t)|\leq g(t) \quad \text{ for all } y\in B_t(x,\eps).
$$
Moreover, if $g$ is bounded from above, then we say $\varphi$ satisfies \emph{Bowen property} over $\mathcal{C}$ at scale $\eps$. We also say $\varphi$ satisfies Bowen property over $\mathcal{C}$ if it does at some scale.

\begin{definition}\label{speci}
Let $h$ be a non-decreasing function defined on $\mathbb{R}^+$, and $\eta>0$ be a constant. We say $\mathcal{C}\subset X\times \mathbb{R}^+$ satisfies \emph{weak controlled specification property} with gap function $h_{\eta}^{\mathcal{C}}=h$ at scale $\eta$ if for every $n\geq 2$ and $((x_i,t_i))_{i=1}^n\subset \mathcal{C}$, there exist
\begin{enumerate}
    \item a sequence $(\tau_i)_{i=1}^{n-1}$ with $\tau_i\leq \max\{h_{\eta}^{\mathcal{C}}(t_i),h_{\eta}^{\mathcal{C}}(t_{i+1})\}$ for each $i$,
    \item a point $y\in X$, for which we denoted by $\text{Spec}^{n,\eta}_{(\tau_i)}(((x_i,t_i)))$
\end{enumerate}
such that
$$
d_{t_i}(f_{\sum_{j=1}^{i-1}(t_j+\tau_j)}(y),x_i)<\eta, \quad \forall 1\le i\le n.
$$

If we can choose $h_{\eta}^{\mathcal{C}}$ to be a constant $\tau>0$, then we say $\mathcal{C}$ satisfies \emph{weak specification property} with gap $\tau$ at scale $\eta$. We say $\mathcal{C}$ satisfies \emph{weak specification property} if it satisfies weak specification property at every scale $\eta>0$.
\end{definition}

Throughout the paper, the gap function $h_{\eta}^{\mathcal{C}}$ is always assumed to be greater than $1$, for simplicity of stating some conditions.

For every $x\in X$ and $\eps>0$, consider the corresponding two-sided infinite Bowen ball given by
$$
\Gamma_{\eps}(x):=\{y\in X:d(f_t(x),f_t(y))\leq\eps \text{ for all }t\in \mathbb{R}\}.
$$
The size of $\{\Gamma_{\eps}(x)\}_{x\in X}$ is frequently used in characterizing non-expansiveness of $(X,F)$. Precisely, we consider two versions of weakened expansiveness in this paper. The first is known as ``entropy expansiveness'', which concerns the topological entropy of $\Gamma_{\eps}(x)$ and is defined as follows.
\begin{definition}
    Given $\eps>0$, we say $(X,F)$ is \emph{entropy expansive at scale $\eps$} if for every $x\in X$, we have $h(\Gamma_{\eps}(x))=0$. $(X,F)$ is \emph{entropy expansive} if it is so at some scale. 
\end{definition}

The second is defined using so-called obstruction to expansiveness, which is a measure-theoretic analog of expansiveness in asking for almost expansiveness for all measures with large entropy. Given $\eps>0$, let the set of \emph{non-expansive points at scale $\eps$} for $F$ be 
$$
\text{NE}(\eps):=\{x\in X:\Gamma_{\eps}(x)\nsubseteq f_{[-s,s]}(x)\text{ for any }s>0\}.
$$
The following quantity introduced in \cite[$\mathsection 2.5$]{CT16} captures the largest information of a non-expansive ergodic measure.
\begin{definition}\label{def:entropyexpansive}
    Given a potential function $\ph\in C(X, \mathbb{R})$ and $\eps>0$, the \emph{pressure of obstructions to expansiveness at scale $\eps$} is defined as
    $$
P^{\perp}_{\exp}(\ph,\eps):=\sup_{\mu\in \mathcal{M}_F^e(X)}\Bigl \{h_{\mu}(f_1)+\int \ph d\mu:\mu(\text{NE}(\eps))=1  \Bigl \},
    $$
    and
    $$
P^{\perp}_{\exp}(\ph)=\lim_{\eps\to 0}P^{\perp}_{\exp}(\ph,\eps).
    $$
When $\ph=0$, we write $h_{\exp}^{\perp}:=P^{\perp}_{\exp}(0)$.
\end{definition}

Notice that if $\mu\in \mathcal{M}_F^e(X)$ satisfies $h_{\mu}(f_1)+\int \ph d\mu>P^{\perp}_{\exp}(\ph,\eps)$, then $\mu(\text{NE}(\eps))=0$. In other words, $\mu$ is \emph{almost expansive} at scale $\eps$. In particular, it follows from the continuity of the flow and compactness of $X$ that $\mu$ is \emph{almost entropy expansive}, meaning that $h(\Gamma_{\eps}(x))=0$ for $\mu$-a.e. $x$. It is also clear that when $(X,F)$ is entropy expansive, every $\mu\in \mathcal{M}_F(X)$ is almost entropy expansive.

\subsubsection{Uniqueness of equilibrium states}
We are ready to state Climenhaga and Thompson's theorem from \cite{CT16}.
\begin{theorem}(\cite[Theorem A]{CT16}) \label{thm:CT}
  Let $(X, F)$ be a continuous flow on a compact metric space, and $\varphi: X\to \RR$ a continuous potential function. Suppose that $P^{\perp}_{\exp}(\varphi) <P(\varphi)$ and that $X\times \RR^+$ admits 
a decomposition $(\PPP, \GGG, \SSS)$ with the following properties:
\begin{enumerate}
    \item $\GGG$ has the weak specification property;
    \item $\varphi$ has the Bowen property on $\GGG$; 
    \item  $P([\PPP]\cup [\SSS], \varphi)<P(\varphi).$
\end{enumerate}
  Then $\varphi$ has a unique equilibrium state.     
\end{theorem}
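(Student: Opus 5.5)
This statement is quoted from \cite[Theorem A]{CT16}, so the paper takes it as a black box. If I were to prove it, I would follow the Bowen--Climenhaga--Thompson scheme: build a candidate equilibrium state with a Gibbs-type lower bound on good orbit segments, then show that any other equilibrium state would contradict that bound.

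\textbf{Step 1: pressure on $\mathcal G$.} Using hypothesis (3), I would first show that the partition functions of $\mathcal G$ carry the full pressure. Concretely, split every orbit segment $(x,t)$ as $p+g+s$ and count separated sets on the prefix and suffix pieces. Since $[\mathcal P]\cup[\mathcal S]$ has strictly smaller pressure, summing over all choices of $p$ and $s$ gives a geometric series. Hence there are $C>0$ and arbitrarily small $\eps$ such that
\[
\Lambda_t(\mathcal G,\varphi,\eps)\ge C e^{tP(\varphi)}
\]
for a large set of times. Combining this with weak specification on $\mathcal G$ and the Bowen property, concatenating good segments gives near-supermultiplicativity. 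This yields two-sided bounds $\Lambda_t(X\times\RR^+,\varphi,\eps)\asymp e^{tP(\varphi)}$, up to constants.

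\textbf{Step 2: construct $\mu$ with a Gibbs property.} I would take weak-* limits of the measures
\[
\mu_t=\frac1t\int_0^t (f_s)_*\nu_t\,ds,\qquad \nu_t=\frac{\sum_{x\in E_t} e^{\Phi(x,t)}\delta_x}{\Lambda_t},
\]
where $E_t$ is a maximal separated set. The hypothesis $P^\perp_{\exp}(\varphi)<P(\varphi)$ gives almost expansivity at a fixed scale $\eps$. This lets the scale-$\eps$ pressure equal $P(\varphi)$ and makes the entropy computations work, so the limit $\mu$ is an equilibrium state. Specification plus the Bowen property then give the lower Gibbs bound
\[
\mu\bigl(B_t(x,\rho)\bigr)\ge K e^{-tP(\varphi)+\Phi(x,t)}\quad\text{for }(x,t)\in\mathcal G.
\]
I would also show that $\mu$ is ergodic, by applying specification to pairs of sets of positive measure.

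\textbf{Step 3: uniqueness.} Suppose $\nu\perp\mu$ is another ergodic equilibrium state. Choose a set $D$ with $\nu(D)$ near $1$ and $\mu(D)$ near $0$. Approximate $D$ by unions of Bowen balls, using almost expansivity of $\nu$ at scale $\eps$. Since $\nu$ has full pressure, there is a separated set inside $D$ whose weighted count is at least $c\,e^{tP(\varphi)}$. By Step 1 a definite proportion of its points have long $\mathcal G$-cores, so the Gibbs bound forces $\mu(D)$ to be at least some fixed positive amount. This is a contradiction.

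The main obstacle is the flow-specific bookkeeping: discretizing times via $[\mathcal C]$, losing the gaps $\tau\le$ const in specification, and handling the time-shift ambiguity in non-expansive points. Expansiveness alone does not fix the scale, which is why the obstruction-entropy hypothesis is needed.
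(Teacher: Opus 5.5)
Your outline follows the Climenhaga--Thompson argument of \cite{CT16}, which the paper only cites without proof: partition-sum estimates from specification, the Bowen property and the pressure gap; a limit measure with a lower Gibbs bound on good segments; and uniqueness by testing a mutually singular ergodic equilibrium state against adapted partitions, with the obstruction-to-expansiveness hypothesis fixing the scale and giving $h_\nu(f_t,\mathcal{A}_t)=h_\nu(f_t)$. The one slip is the order in Step 1: the uniform lower bound $\Lambda_t(\mathcal{G},\varphi,\eps)\ge Ce^{tP(\varphi)}$ cannot come from the gap alone, because two estimates must be proved first. These are the specification-based upper bound $\Lambda_t(\mathcal{G},\varphi,\eps)\lesssim e^{tP(\varphi)}$, which together with the gap yields $\Lambda_t(X\times\RR^+,\varphi,\eps)\lesssim e^{tP(\varphi)}$, and the fixed-scale lower bound $\Lambda_t(X\times\RR^+,\varphi,\eps)\ge e^{tP(\varphi)}$ coming from $P(\varphi,\eps)=P(\varphi)$.
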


Observe that the condition $P^{\perp}_{\exp}(\varphi) <P(\varphi)$ will generally not hold when the system admits a neutral direction that is not along the flow. To address this issue and make Climenhaga-Thompson's criteria still applicable, motivated by Pavlov's work in \cite{Pav19}\footnote{The idea traces back to one of Climenhaga’s math blogs online; see
https://vaughnclimenhaga.wordpress.com/2015/06/12/unique-mmes-with-specification-an-alternate-proof/}, the second \& third author \cite{WangWu} propose a general criteria in characterizing thermodynamic formalism for entropy-expansive systems assumed with a more delicately controlled specification. 

\begin{theorem}(\cite[Theorem A]{WangWu})\label{WW}
 Let $(X,F)$ be a continuous flow on a compact metric space, and $\ph:X\to \mathbb{R}$ be a continuous potential. Suppose there exists some $\eps>0$ at which $(X,F)$ is entropy expansive, and  $X\times\mathbb{R}^+$ admits a decomposition $(\mathcal{P},\mathcal{G},\mathcal{S})$ satisfying the following conditions:
    \begin{enumerate}
        \item For every $\eta>0$, $\mathcal{G}$ satisfies weak controlled specification at scale $\eta$ with gap function $h_{\eta}^{\mathcal{G}}$;
        \item $\ph$ is $g$-distorted over $X\times \mathbb{R}^+$ at scale $\eps$ (see \cite[$\mathsection 2.2$]{WangWu});
        \item For every $\eta>0$, $h_{\eta}$ and $g$ satisfy $\liminf_{t\to \infty}\frac{h_{\eta}^{\mathcal{G}}(t)+g(t)}{\log t}=0$;
        \item $\lim_{\eta\to 0}P([\mathcal{P}]\cup [\mathcal{S}],\ph,\eta,\eps)<P(\ph)$.
    \end{enumerate}
    Then $(X,F,\ph)$ has a unique equilibrium state.    
\end{theorem}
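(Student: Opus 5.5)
The plan is to split the statement into existence and uniqueness, and to prove uniqueness by a Pavlov-type counting argument in the spirit of \cite{Pav19}, rather than via Gibbs measures as in \cite{CT16}.

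\emph{Existence.} Since $(X,F)$ is entropy expansive at scale $\eps$, Bowen's theorem applies. It gives that $\mu\mapsto h_\mu(F)$ is upper semicontinuous on $\mathcal{M}_F(X)$, and that $h_\mu(F)$ and $P(\ph)$ are both computed at the fixed scale $\eps$. Explicitly, $P(\ph)=P(\ph,\eps)$, and for ergodic $\mu$ the Katok-type formula at scale $\eps$ holds. Upper semicontinuity yields an equilibrium state. By ergodic decomposition it then suffices to show that there cannot be two distinct ergodic equilibrium states $\mu\neq\nu$.

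\emph{Setting up the contradiction.} Suppose $\mu\neq\nu$ are distinct ergodic equilibrium states. Fix a continuous $\psi$ and $\gamma>0$ with $\int\psi\,d\mu>\int\psi\,d\nu+3\gamma$. For large $t$, consider the following collections of orbit segments:
\[
\mathcal{A}_t:=\Big\{(x,t): \Big|\frac1t\Psi(x,t)-\int\psi\,d\mu\Big|<\gamma\Big\},
\]
and $\mathcal{B}_t$ defined in the same way with $\nu$ in place of $\mu$.

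By the Katok entropy formula at scale $\eps$ and the Birkhoff theorem, both $\mathcal{A}_t$ and $\mathcal{B}_t$ carry $(t,\eps)$-separated sets $E$ with
\[
\sum_{x\in E}e^{\Phi(x,t)}\ge e^{t(P(\ph)-\delta)}.
\]
Condition (4) says the bad collection $[\mathcal{P}]\cup[\mathcal{S}]$ has pressure strictly below $P(\ph)$ at small scales. Using this, I discard segments whose prefix or suffix is longer than $\rho t$. Counting then shows that a definite fraction of the weight survives with good cores in $\mathcal{G}$ of length at least $(1-2\rho)t$. This step uses the bound \eqref{upper} and the fact that $\ph$ is $g$-distorted, so that replacing a segment by its core costs only $e^{O(\rho t)+g(t)}$.

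\emph{Gluing.} For a fixed number $k$ of blocks, I glue good cores chosen from either $\mathcal{A}$ or $\mathcal{B}$, according to an arbitrary word $w\in\{A,B\}^k$, using weak controlled specification at scale $\eta\ll\eps$. The glued orbit segments have length about $k(t+h_\eta^{\mathcal{G}}(t))$, with gaps $\tau_i\le h^{\mathcal{G}}_\eta(t)$. The key estimate is a lower bound on the partition function at scale $\eps/2$ along the glued segments of the form
\[
\Lambda\ \gtrsim\ \frac{\#\{\text{words}\}\,\big(e^{t(P(\ph)-\delta)}e^{-g(t)}\big)^k}{h^{\mathcal{G}}_\eta(t)^k\, e^{k\,h^{\mathcal{G}}_\eta(t)\|\ph\|}}.
\]
Here the factor $h^{\mathcal{G}}_\eta(t)^k$ accounts for the unknown gap lengths and is handled by pigeonholing on the gap vector. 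Distinct words $w$ yield $(\cdot,\eps/2)$-separated points, because their Birkhoff averages of $\psi$ differ on some block.

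Choosing $t$ along the subsequence where $(h^{\mathcal{G}}_\eta(t)+g(t))/\log t\to0$ makes every loss term $e^{o(\log t)}$ per block. The extra factor $2^k$ from the choice of words then pushes the growth rate of the partition function above $P(\ph)$. More precisely, comparing with \eqref{upper} and letting $k\to\infty$ first and then $t\to\infty$ gives a rate $\ge P(\ph)+\frac{\log2}{t}-o(\frac{\log t}{t})$. This must be amplified by instead mixing at many block lengths, or by using a positive-density set of words, to get a genuine contradiction with the subexponential upper bound $\Lambda_T\le C e^{T(P+\delta)}$ at fixed scale.

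I expect this last step to be the main obstacle. The gain $\log 2$ per block is only of order $1/t$ per unit time, while the gap and distortion losses are of order $\log t/t$. This is why the hypothesis requires the liminf of $(h^{\mathcal{G}}_\eta(t)+g(t))/\log t$ to be $0$ rather than merely finite. I would try first to follow Pavlov's refinement: use that a $\mu$-typical block and a $\nu$-typical block differ in $\psi$-average by $3\gamma t$, so that the separation between words persists under the gap shifts. I would then compare the glued count directly against the upper Gibbs-free bound $\Lambda_T(X,\ph,\eps)\le e^{TP(\ph)+o(T)}$, which is available at the fixed scale by entropy expansiveness, and let $\eta\to0$ using condition (4).
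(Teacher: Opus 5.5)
\textbf{Verdict: the proposal has a genuine gap, and it is not confined to the last step you flag.} For reference, the paper does not reprove this statement. It reads it off from Theorem \ref{thm:weprove} with $\mathcal{D}=X\times\mathbb{R}^+$, so that $\mathcal{D}^c=\emptyset$ and condition (4) is unchanged. Entropy expansiveness at scale $\eps$ supplies the two fixed-scale hypotheses $P(\ph,\eta)=P(\ph)$ and $h_\mu(f_t,\mathcal{A}_t)=h_\mu(f_t)$, and $g$-distortion on $X\times\mathbb{R}^+$ gives it on $\mathcal{G}$.

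You instead rebuild the uniqueness engine, and two things fail.
\begin{enumerate}
\item \emph{Exponential losses.} Your per-block lower bound $e^{t(P(\ph)-\delta)}$ from the Katok formula is an exponential loss, with $\delta$ fixed before $t\to\infty$. So is trimming to cores at cost $e^{O(\rho t)}$. No counting gain of size $\log 2$, or even $\log t$, per block can absorb these. Your final rate $P(\ph)+\frac{\log 2}{t}-o(\frac{\log t}{t})$ silently drops the $-\delta$. The upper bound $\Lambda_T\le e^{TP(\ph)+o(T)}$ you compare against is also far too weak.
\item \emph{Binary words cannot win.} Even with sharp block bounds, gluing $k$ blocks along words in $\{A,B\}^k$ gains $\log 2$ per junction. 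Each junction costs roughly $h_\eta(t)(|P(\ph)|+\|\ph\|_{C^0})+g(t)+\log h_\eta(t)$. This need not be below $\log 2$ even when $h_\eta$ and $g$ are bounded, i.e.\ in Bowen's uniform setting. The a priori upper bound is itself proved by $k$-block gluing, so $2^k$ is a constant-per-block effect of the same order as these costs. It gives no contradiction in any regime. Mixing block lengths or using positive-density word sets keeps the gain linear in the number of junctions, so it does not help.
\end{enumerate}

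\textbf{The missing idea is a $\log t$ gain paid for by a single junction, supported by bounds with sub-logarithmic error.}
\begin{enumerate}
\item[(i)] \emph{Sharp upper bound.} Prove $\Lambda_t(X,\ph,\eps')\le e^{tP(\ph)+E(t)}$ with $E(t)=O(h_\eta(t)+g(t)+\log h_\eta(t))$. Do this by gluing arbitrarily many $\mathcal{G}$-segments and pigeonholing the discretized gap vector. Then pass to growth rates, using $P(\ph,\eta)=P(\ph)$, and absorb prefixes and suffixes via condition (4).
\item[(ii)] \emph{Entropy inequality instead of Katok.} Take a partition $\mathcal{A}_t$ adapted to a maximal $(t,\eps)$-separated set; here $h_\mu(f_t,\mathcal{A}_t)=h_\mu(f_t)$ is used. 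Then $tP(\ph)\le H_\mu(\mathcal{A}_t)+\int\Phi(\cdot,t)\,d\mu$. Splitting off a collection of $\mu$-measure at least $\alpha$ forces its partition sum to be at least $e^{tP(\ph)-\alpha^{-1}O(E(t)+1)}$. Combined with the pressure gap, this also shows that prefixes and suffixes longer than a constant multiple of $E(t)$ carry little $\mu$-mass. Trimming to $\mathcal{G}$-cores then costs only $e^{O(E(t))}$.
\item[(iii)] \emph{Glue once.} Concatenate a $\mu$-typical core of length about $j$ with a $\nu$-typical core of length about $t-j$. Require that all suffixes of the first piece and all prefixes of the second of length at least $t_0$ have $\psi$-averages near $\int\psi\,d\mu$, resp.\ $\int\psi\,d\nu$. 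This holds uniformly in $j$ by Birkhoff forward and backward plus invariance. Switch positions spaced by $t_0$, plus the $O(E(t))$ trimmed lengths, then give separated orbits. The resulting partition sum is of order at least $(t/t_0)\,e^{tP(\ph)-O(E(t))}$. This contradicts (i) along any sequence with $E(t)=o(\log t)$.
\end{enumerate}
The positional $\log t$ gain against the cost of one junction is exactly why hypothesis (3) is stated at the scale $\log t$.
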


\begin{theorem}(\cite[Theorem C]{WangWu})\label{thm:WW1}
 Let $(X,F)$ be a continuous flow on a compact metric space, and $\ph:X\to \mathbb{R}$ be a continuous potential. Suppose that $P_{\exp}^\perp(\ph)<P(\ph)$, and $X\times\mathbb{R}^+$ which admits a decomposition $(\mathcal{P},\mathcal{G},\mathcal{S})$ satisfying conditions (1), (3) and (4) in Theorem \ref{WW}, as well as
 $$
\ph \text{ is }g\text{-distorted over }\mathcal{G} \text{ at scale }\eps.
 $$
    Then $(X,F,\ph)$ has a unique equilibrium state.    
\end{theorem}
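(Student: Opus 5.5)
The proof I would give adapts the Climenhaga–Thompson scheme behind Theorem \ref{thm:CT}, in the form of Pavlov's alternate uniqueness argument as used for Theorem \ref{WW}, with a measure-theoretic replacement for entropy expansiveness. \textbf{Step 1} deals with expansiveness. Fix $\eps$ so small that $P^{\perp}_{\exp}(\ph,\eps)<P(\ph)$ and so that $\ph$ is $g$-distorted over $\mathcal G$ at scale $\eps$. Any ergodic equilibrium state $\mu$ satisfies $h_\mu(f_1)+\int\ph\,d\mu=P(\ph)>P^{\perp}_{\exp}(\ph,\eps)$. As remarked after Definition \ref{def:entropyexpansive}, this forces $\mu(\mathrm{NE}(\eps))=0$, so $\mu$ is almost entropy expansive at scale $\eps$. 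By the Bowen/Misiurewicz argument, $h_\mu(f_1)$ is then computed by any partition of diameter less than $\eps$. In Katok's form: for every set $E$ with $\mu(E)>\gamma$, the $(t,\eps)$-separated sets in $E$ carry weight roughly $e^{t(P(\ph)-\delta)}$. This step replaces the global entropy-expansiveness hypothesis of Theorem \ref{WW}, and it is the only place where $P^{\perp}_{\exp}(\ph)<P(\ph)$ is used.

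\textbf{Step 2} moves typical orbit segments into $\mathcal G$. Hypothesis (4) gives $P([\mathcal P]\cup[\mathcal S],\ph,\eta,\eps)<P(\ph)$ for small $\eta$. Combining this with the upper bound \eqref{upper} and the Katok estimate from Step 1, I would show the following for each ergodic equilibrium state $\mu$ and each $\gamma>0$: there is $M$ such that, for large $t$, a set of $\mu$-measure at least $1-\gamma$ consists of points $x$ with $p(x,t),s(x,t)\le M$. The reason is that segments with long prefixes or suffixes carry exponentially smaller pressure. Consequently a $(t,\eps)$-separated subset of $\mathcal G^M_t$ (good cores with bounded tails) has partition sum at least $e^{t(P(\ph)-\delta)}$, with the measure simultaneously concentrated near $\mu$ in the weak-$*$ sense.

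\textbf{Step 3} is the contradiction. Suppose $\mu\ne\nu$ are ergodic equilibrium states, and pick a continuous $\psi$ and $c>0$ such that $\int\psi\,d\mu>\int\psi\,d\nu+3c$. By Step 2, choose $(t,\eps)$-separated families $E^\mu_t,E^\nu_t\subset\mathcal G_t$ of segments whose Birkhoff averages of $\psi$ are $c$-close to $\int\psi\,d\mu$ and $\int\psi\,d\nu$ respectively, each with weight at least $e^{t(P-\delta)}$. For $k$ blocks and each word $w\in\{\mu,\nu\}^k$, glue a block from $E^{w_i}_t$ via weak controlled specification at scale $\eta\ll\eps$. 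The gaps satisfy $\tau_i\le h_\eta^{\mathcal G}(t)$, and I would also record the gap lengths as data. Different words give $(kt,\eps/2)$-separated orbits, because $\psi$-averages on the differing blocks separate them. Different block choices are separated because of separation within blocks. By $g$-distortion, each block loses at most $g(t)+h_\eta^{\mathcal G}(t)\|\ph\|_{C^0}$ in the exponent. This yields
$$\Lambda_{k(t+h^{\mathcal G}_\eta(t))}(\ph,\eps/2)\ \ge\ 2^k\,e^{k t(P-\delta)}\,e^{-k(g(t)+C h_\eta^{\mathcal G}(t))}.$$

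\textbf{The main obstacle} is to make the gain beat the losses. The gain from the two-letter alphabet is only $\log 2$ per block, while the losses $g(t)+Ch_\eta^{\mathcal G}(t)$ are merely $o(\log t)$ along a subsequence (hypothesis (3)), not bounded. My first attempt would be to enlarge the alphabet to polynomially many letters. I would split each block of length $t$ into subblocks and record where $\mu$-type pieces switch to $\nu$-type pieces, producing about $t^{a}$ distinguishable choices per block, which gives a gain of $a\log t$. I would then pass to $t\to\infty$ along the subsequence where $(g+h_\eta^{\mathcal G})/\log t\to0$. This should give $P(\ph)\ge P(\ph)-\delta+(a\log t-o(\log t))/(t+h(t))$. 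That by itself is not a contradiction, since the right side tends to $P-\delta$. So the sharper route, following Pavlov, is to compare against the upper bound $\Lambda_t(\mathcal G,\ph,\eps)\le Ce^{tP}$, up to polynomial factors, obtained from the gluing itself. Once the $\delta$ terms are absorbed polynomially rather than exponentially, the word-counting gain contradicts that upper bound. Getting this polynomial (rather than $e^{\delta t}$) control is the delicate point, and it is exactly where the $\liminf/\log t=0$ hypothesis is used.
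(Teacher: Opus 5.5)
You have a genuine gap: Step 3 does not close, and the obstruction is structural rather than bookkeeping. Every lower bound you feed into the gluing comes from Step 1 in Katok form, i.e.\ weights of order $e^{t(P(\ph)-\delta)}$ with $\delta>0$ fixed while $t\to\infty$. Each glued block therefore costs a factor $e^{-\delta t}$. No gain of size $\log 2$, or even $a\log t$, per block can beat that, and nothing in Steps 1--2 turns Brin--Katok/Katok asymptotics into bounds of polynomial precision. The sentence ``once the $\delta$ terms are absorbed polynomially'' is therefore exactly the missing proof, not a technicality, and the $t^{a}$-choices heuristic does not touch this loss. Step 2 already has the same defect: Katok-type estimates only let you discard prefixes and suffixes of length $\ge\kappa t$ (compare Lemma \ref{lem:longtailentropy}), not obtain a uniform bound $M$.

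The missing idea is to use the partition property from Step 1 \emph{exactly} rather than asymptotically. If $\nu$ is ergodic and almost entropy expansive at scale $\eps$, and $\mathcal{A}_t$ is a finite partition of $d_t$-diameter $<\eps$, then $t\,h_\nu(f_1)=h_\nu(f_t,\mathcal{A}_t)\le H_\nu(\mathcal{A}_t)$ for \emph{every} $t$, with no error term. Combine this with control of partition sums at a fixed scale; that is where $P(\ph,\eta)=P(\ph)$ for small $\eta$ enters (\cite[Proposition 3.7]{CT16}), and your plan never uses it. A Bowen-type entropy-splitting argument, of the kind in \cite[\S4.6]{CT16} and, for unbounded gaps, in \cite{Pav19}, then bounds from below the $\ph$-weight of any family carrying $\nu$-measure $\ge\gamma$. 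The loss is only subexponential, governed by $h_\eta(t)$ and $g(t)$, instead of $e^{-\delta t}$. Only then can the hypothesis $\liminf_{t\to\infty}(h_\eta(t)+g(t))/\log t=0$ do its job.

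This is why Theorem \ref{thm:weprove} carries two extra hypotheses: $P(\ph,\eta)=P(\ph)$ for $\eta<\eps/2$, and $h_\mu(f_t,\mathcal{A}_t)=h_\mu(f_t)$ for almost entropy expansive $\mu$. The paper's proof simply applies Theorem \ref{thm:weprove} with $\mathcal{D}=X\times\mathbb{R}^+$, so $\mathcal{D}^c=\emptyset$ and condition (4) reduces to that of Theorem \ref{WW}. The assumption $P^{\perp}_{\exp}(\ph)<P(\ph)$ supplies both extra hypotheses via \cite[Propositions 3.4 and 3.7]{CT16}. Your Step 1 recovers the second of these but then discards its exact form.
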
 

Both theorems are immediate applications of \cite[Theorem D]{WangWu}. 

\begin{theorem} (\cite[Theorem D]{WangWu})\label{thm:weprove}
     Let $(X,F)$ be a continuous flow on a compact metric space, and $\ph:X\to \mathbb{R}$ a continuous potential. Suppose there exists some $\eps>0$ 
     such that
     $$
P(\ph,\eta)=P(\ph) \text{ for all }\eta\in (0,\eps/2)
     $$
     and for every $t>0$, any finite partition $\mathcal{A}_t$ with $\text{Diam}_t(\mathcal{A}_t)<\eps$, and any $\mu\in \mathcal{M}_F^e(X)$ being almost entropy expansive at scale $\eps$, we have
     $$
h_{\mu}(f_t,\mathcal{A}_t)=h_{\mu}(f_t).
     $$
     Assume further that there exists some $\mathcal{D}\subset X\times\mathbb{R}^+$ admitting a decomposition $(\mathcal{P},\mathcal{G},\mathcal{S})$ satisfying the following conditions:
    \begin{enumerate}
        \item For every $\eta>0$, $\mathcal{G}$ satisfies weak controlled specification at scale $\eta$ with gap function $h_{\eta}^{\mathcal{G}}$;
        \item $\ph$ is $g$-distorted over $\mathcal{G}$ at scale $\eps$;
        \item For every $\eta>0$, $h_{\eta}$ and $g$ satisfy $\liminf_{t\to \infty}\frac{h_{\eta}^{\mathcal{G}}(t)+g(t)}{\log t}=0$;
        \item $\lim_{\eta\to 0}P(\mathcal{D}^c\cup [\mathcal{P}]\cup [\mathcal{S}],\ph,\eta,\eps)<P(\ph)$.
    \end{enumerate}
    Then $(X,F,\ph)$ has a unique equilibrium state.
\end{theorem}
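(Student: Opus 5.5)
The plan is to follow the Climenhaga–Thompson scheme, with the gluing step replaced by Pavlov's counting argument so that gaps and distortions of size $o(\log t)$ can be absorbed. There are three steps: existence, a reduction to the good collection $\mathcal{G}$, and uniqueness by contradiction.

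\emph{Existence.} Fix $\eta\in(0,\eps/2)$. By hypothesis $P(\ph,\eta)=P(\ph)$, so we can take maximal $(t,\eta)$-separated sets $E_t$ with $\frac1t\log\sum_{x\in E_t}e^{\Phi(x,t)}\to P(\ph)$ along a sequence $t_k\to\infty$. Let $\sigma_t$ be the normalized weighted atomic measures on $E_t$, and let $\mu_t=\frac1t\int_0^t (f_s)_*\sigma_t\,ds$. Take a weak-$*$ limit $\mu$ of $\mu_{t_k}$. Misiurewicz's argument, run with a partition $\mathcal{A}_t$ of $d_t$-diameter less than $\eps$ and $\mu$-null boundaries, gives
$$
h_\mu(f_t,\mathcal{A}_t)+\int\ph\,d\mu\ge P(\ph).
$$
Passing to ergodic components, a component that is not almost entropy expansive at scale $\eps$ is handled as in the hypotheses: under (E1) this never happens, and under (E2) the pressure gap rules it out. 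For the remaining components, the assumed identity $h_\mu(f_t,\mathcal{A}_t)=h_\mu(f_t)$ turns this into $h_\mu(F)+\int\ph\,d\mu\ge P(\ph)$, so $\mu$ is an equilibrium state.

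\emph{Reduction to $\mathcal{G}$.} Let $\mu$ be any ergodic equilibrium state. Condition (4) says that the pressure of $\mathcal{D}^c\cup[\mathcal{P}]\cup[\mathcal{S}]$ is strictly less than $P(\ph)$. A Katok-type entropy argument, using the same partition identity, then shows the following. For every $\gamma>0$ there is a constant $L$ such that for large $t$ the set of $x$ with $(x,t)\in\mathcal{D}$ and prefix and suffix lengths $p,s\le L$ has $\mu$-measure at least $1-\gamma$. Covering this set by Bowen balls and using the upper bound (\ref{upper}) on bad collections, we obtain $(t,\eta)$-separated sets $E_t^\mu\subset\mathcal{G}_t$ (after shifting by at most $L$) with two properties: $\sum_{E^\mu_t}e^{\Phi}\ge e^{t(P(\ph)-\delta)}$, and their Bowen $\eta$-neighbourhoods carry $\mu$-mass at least $1-\gamma$.

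\emph{Uniqueness.} Suppose $\mu\neq\nu$ are two ergodic equilibrium states. Choose a continuous $\psi$ with $\int\psi\,d\mu\ne\int\psi\,d\nu$. By Birkhoff, the sets $E^\mu_t$ and $E^\nu_t$ from the previous step can be taken to consist of orbit segments whose $\psi$-averages are near $\int\psi\,d\mu$ and $\int\psi\,d\nu$ respectively. In particular $E^\mu_t$ and $E^\nu_t$ are disjoint and remain $(t,\eta)$-separated from each other at a slightly smaller scale. Now pick times $T$ along the sequence realizing $\liminf (h_{\eta}^{\mathcal{G}}(T)+g(T))/\log T=0$, so that $h_{\eta}^{\mathcal{G}}(T)+g(T)\le\delta\log T$.

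Using weak controlled specification, concatenate $k$ segments, each drawn from $E^\mu_T\cup E^\nu_T$. The resulting points are $(k(T+h_{\eta}^{\mathcal{G}}(T)),\eta/2)$-separated. By the $g$-distortion on $\mathcal{G}$, their total weight is at least
$$
\bigl(2e^{T(P-\delta)}e^{-g(T)-|P|h_{\eta}^{\mathcal{G}}(T)}\bigr)^k .
$$
To make this exceed what (\ref{upper}) allows for total length $k(T+h_{\eta}^{\mathcal{G}}(T))$, I will follow Pavlov and not rely on the bare factor $2$. Instead I would glue segments of \emph{varying} lengths $T_1,\dots,T_k$ drawn from a range $[T,2T]$, and I would also exploit the freedom in where the switches between $\mu$-blocks and $\nu$-blocks occur. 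This gives a number of admissible configurations growing like $T^{ck}$ for some $c>0$ independent of $\delta$. Against this, the gap-and-distortion losses are only $T^{O(\delta)k}$, so the total weight would beat $e^{(P+\delta')\cdot\text{length}}$ and contradict the definition of $P(\ph)$. Hence $\mu=\nu$, and combined with existence this gives a unique equilibrium state.

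I expect the main obstacle to be this last step. One has to produce a gain that is polynomial in $T$ per block, which beats the merely $o(\log T)$ losses. At the same time one must keep distinct configurations separated: the varying block lengths shift later blocks, so separation has to be recovered from the distinct $\psi$-statistics of $\mu$- and $\nu$-blocks rather than from the separated sets themselves.
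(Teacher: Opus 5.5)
The paper does not prove this statement; it quotes it from \cite{WangWu}, which adapts Pavlov's argument \cite{Pav19}. So I am judging your outline on its own, and its decisive counting step does not close.

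\textbf{The per-block estimate loses too much.} Your bound $\sum_{E^\mu_T}e^{\Phi}\ge e^{T(P(\ph)-\delta)}$ comes from a Katok/Shannon--McMillan--Breiman argument. It therefore loses a factor $e^{-T\delta}$ per block, for $T\ge T_0(\delta)$. After gluing $k$ blocks this is $e^{-kT\delta}$, and no combinatorial gain $T^{ck}$ can offset it: you would need $c\log T>T\delta$. Your bookkeeping ``the losses are only $T^{O(\delta)k}$'' drops this term.

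A Pavlov-type argument needs estimates whose error is $o(\log T)$ rather than $o(T)$. Two ingredients supply this:
\begin{itemize}
\item an upper bound of the form $\Lambda_t(X,\ph,\eta)\le e^{tP(\ph)+O(h_\eta(t)+g(t))}$. You get it by gluing $\mathcal{G}$-segments with weak controlled specification at a fixed scale and using condition (4) to pass to all orbit segments. This is where $P(\ph,\eta)=P(\ph)$ enters.
\item the exact inequality $\frac1m H_\mu\bigl(\bigvee_{j=0}^{m-1}f_t^{-j}\mathcal{A}_t\bigr)\ge h_\mu(f_t,\mathcal{A}_t)=h_\mu(f_t)$, valid for every $m$ by subadditivity. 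The equality is where the partition hypothesis is really used.
\end{itemize}
That hypothesis is not needed for existence, where $h_\mu(f_t)\ge h_\mu(f_t,\mathcal{A}_t)$ suffices; (E1)/(E2) are not hypotheses of this theorem anyway. It is also only assumed for measures that are almost entropy expansive at scale $\eps$, so you must say why your $\mu$ and $\nu$ are.

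Combining the two estimates, any union $D$ of partition elements with $\mu(D)\ge\gamma$ carries weight at least $e^{tP(\ph)-(O(h_\eta(t)+g(t))+\log 2)/\gamma}$. Your ``fixed $L$'' claim needs the same sharpness. A Katok-type argument only bounds bad prefixes and suffixes by $\theta t$ (compare Lemma \ref{lem:longtailentropy}). The sharp estimates give $O_\gamma(h_\eta(t)+g(t))$, which is what the gluing can afford.

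\textbf{The $T^{ck}$ gain is not available in your configuration.} Varying the lengths of consecutive $\mu$-blocks cannot by itself produce a gain. If it did, the same count would contradict the definition of $P(\ph)$ in any system with a unique equilibrium state. So any gain must come from where the $\mu/\nu$ switches sit.

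When all blocks have length comparable to $T$, typicality defined through $\psi$-averages pins a switch down only to within $\kappa T$. That is a gain of $O(\log(1/\kappa))$ per switch, independent of $T$. Against it stands a per-gluing loss $O(h_\eta(T)+g(T))$, which is unbounded in general.

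Drawing lengths from $[T,2T]$ also breaks the gap control. Only $\liminf_t h_\eta(t)/\log t=0$ is assumed and $h_\eta$ is non-decreasing, so $h_\eta(2T)$ need not be small when $h_\eta(T)$ is. Since gaps are bounded by $\max\{h_\eta(t_i),h_\eta(t_{i+1})\}$, every glued piece must be no longer than the chosen good time.

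One way to get a gain of logarithmic size per gluing is an asymmetry of scales:
\begin{itemize}
\item Take good times $n\ll L$, say $L\ge n^2$.
\item Concatenate a $\mu$-typical $\mathcal{G}$-piece of length $i$, a $\nu$-typical piece of length $n$, and a $\mu$-typical piece of length about $L-i-n$.
\item Do this for about $L/(\kappa n)$ values of $i$ that the $\psi$-statistics can tell apart, and compare with the sharp upper bound at length about $L$.
\end{itemize}
The gain $\log(L/(\kappa n))\ge\frac12\log L-\log(1/\kappa)$ then beats the loss $O(h_\eta(L)+g(L))=o(\log L)$. This requires the sharp estimates to hold uniformly for all piece lengths up to $L$.
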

\begin{remark} \label{rmk:heta}
    As mentioned in \cite[Remark 2.6]{WangWu}, weak controlled specification over $\mathcal{G}$ can be safely transferred to which over $\mathcal{G}^1$. Precisely, when $\mathcal{G}$ satisfies weak controlled specification at scale $\eta$ with gap function $h_{\eta}^{\mathcal{G}}$, so does $\mathcal{G}^1$ at scale $\eta'$ with gap function $h_{\eta}^{\mathcal{G}}$, where 
    $$\eta':=\max\{d(f_s(x),f_s(y)):d(x,y)\leq \eta, s\in [0,1]\}.$$ In particular, $\mathcal{G}$ satisfying weak controlled specification at all scales is equivalent to which for $\mathcal{G}^1$. In this case, as we always implement weak controlled specification over $\mathcal{G}^1$ throughout the paper, we will abbreviate  $h_{\eta}^{\mathcal{G}^1}$, the gap function over $\mathcal{G}^1$ at scale $\eta$, as $h_{\eta}$. It is also clear that a similar equivalence holds for Bowen property of $\varphi$ over $\mathcal{G}$ and $\mathcal{G}^1$, and we refer the interested readers to \cite[Remark 2.6]{WangWu} without further explanations as that is indeed simpler.
\end{remark}

\subsection{Lemmas that come in handy}

\begin{lemma}[Bi-infinite specification] \label{lem:infinitespecification}
    Suppose that $\mathcal{G}^1$ satisfies weak controlled specification at scale $\eta$ with gap function $h_{\eta}$. Then for any bi-infinite sequence of orbit segments $\{(x_i,t_i)\}_{i=-\infty}^{\infty}$ from $\mathcal{G}^1$ and $\eta>0$, there exist $y\in X$ and a sequence $\{h_i\}_{i=-\infty}^{\infty}$ with $h_i\in [0,\max\{h_\eta(t_i),h_\eta(t_{i+1})\}]$ such that the following holds true: writing $m_0=0$, $m_i:=\sum_{j=0}^{i-1}(t_j+h_j)$, and $m_{-i}:=-\sum_{j=-i}^{-1}(t_j+h_j)$ for all $i\in \mathbb{Z}$, we must have $d_{t_{i}}(x_{i},f_{m_i}(y))\leq \eta$.
\end{lemma}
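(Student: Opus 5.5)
The plan is a finite specification followed by a compactness (diagonal) limit. For each $N\in\NN$, apply the weak controlled specification of $\mathcal{G}^1$ at scale $\eta$ (see Definition \ref{speci} and Remark \ref{rmk:heta}) to the finite family $(x_{-N},t_{-N}),\dots,(x_N,t_N)$. This gives a point $y_N\in X$ and gaps $h^{(N)}_i\in[0,\max\{h_\eta(t_i),h_\eta(t_{i+1})\}]$ for $-N\le i\le N-1$. Write $s_N:=\sum_{j=-N}^{-1}(t_j+h^{(N)}_j)$ for the time at which the shadowing of $(x_0,t_0)$ begins, and set $z_N:=f_{s_N}(y_N)$. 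Then $z_N$ shadows $x_0$ on $[0,t_0]$. More generally, with $m^{(N)}_i$ defined from the $h^{(N)}_j$ exactly as in the statement, we have
$$d_{t_i}(x_i,f_{m^{(N)}_i}(z_N))<\eta \quad\text{for all } |i|\le N.$$

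Next we pass to the limit. Every gap is confined to a fixed compact interval $I_i:=[0,\max\{h_\eta(t_i),h_\eta(t_{i+1})\}]$, and $X$ is compact. By a Cantor diagonal argument we extract a subsequence $N_k\to\infty$ along which three things hold: $z_{N_k}\to y$, and for every $i\in\mathbb{Z}$ the gap $h^{(N_k)}_i$ converges to some $h_i\in I_i$. For each fixed $i$, $m^{(N_k)}_i$ is a finite sum of terms $t_j+h^{(N_k)}_j$, so $m^{(N_k)}_i\to m_i$, where $m_i$ is built from the limit gaps $h_i$ by the formulas in the statement.

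It remains to check the shadowing inequality for $y$. Fix $i$ and $s\in[0,t_i]$. The map $(\tau,z)\mapsto f_\tau(z)$ is jointly continuous, so $f_{m^{(N_k)}_i+s}(z_{N_k})\to f_{m_i+s}(y)$. Passing to the limit in $d(f_s x_i, f_{m^{(N_k)}_i+s}(z_{N_k}))<\eta$ yields $d(f_s x_i,f_{m_i+s}(y))\le\eta$. Taking the maximum over $s\in[0,t_i]$ gives $d_{t_i}(x_i,f_{m_i}(y))\le\eta$. This is exactly why the conclusion has the non-strict inequality $\le\eta$: the strict inequality does not survive the limit.

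The main obstacle is re-centering. Specification anchors the point at the start of the first segment, which here is $-N$ and drifts away as $N$ grows. Shifting to $z_N$ so that segment $0$ always begins at time $0$ fixes this, and the boundedness of each individual gap is what makes the diagonal extraction work. No uniform bound on the total gap sum is needed, since each $m_i$ involves only finitely many gaps.
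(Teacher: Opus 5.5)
Your proof is correct and is essentially the paper's own argument: the paper only says the lemma follows from weak controlled specification and compactness of $X$ ``by a standard diagonal argument.'' Your write-up fills in the details the paper leaves implicit. These are recentering at the start of segment $0$, compactness of each gap interval (which uses the implicit nonnegativity of the transition times in Definition \ref{speci}), and joint continuity of the flow to obtain the non-strict inequality $\le\eta$ in the limit.
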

\begin{proof}
The lemma follows immediately from Definition \ref{speci} of weak controlled specification and compactness of $X$, by a standard diagonal argument.
\end{proof}

The following lemma aims at providing enough upper semi-continuity of the entropy map in the case of $h_{\exp}^{\perp}<h$.
\begin{lemma} \label{lem:usc}
    Let $\eps>0$ and $\{\mu_n\}_{n\in \mathbb{N}}$ be a sequence of elements in $\mathcal{M}_{F}(X)$ such that 
    \begin{itemize}
        \item each $\mu_n$ is almost expansive at scale $\eps$,
        \item $\mu_n\to \mu$ as $n\to \infty$ in the weak*-topology.
    \end{itemize}
    Then $\limsup_{n\to \infty}h_{\mu_n}(F)\leq h_{\mu}(F)$.
\end{lemma}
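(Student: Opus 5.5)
Fix the scale $\eps$ at which every $\mu_n$ is almost expansive. The plan is to run the standard Misiurewicz argument for upper semicontinuity of entropy, using a partition of diameter less than $\eps$ for the time-one map, and to invoke almost expansiveness to show that such a partition already generates the full entropy of each $\mu_n$. Since $h_\nu(F)=h_\nu(f_1)$ for flows, it suffices to work with the homeomorphism $f_1$.

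First, I choose a finite Borel partition $\mathcal{A}$ of $X$ with $\text{Diam}(\mathcal{A})<\eps$ and $\mu(\partial A)=0$ for every $A\in\mathcal{A}$. This is possible because, around any point, all but countably many radii give balls whose boundaries are $\mu$-null.

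Second, I show $h_{\mu_n}(f_1)=h_{\mu_n}(f_1,\mathcal{A})$ for every $n$. As remarked after Definition \ref{def:entropyexpansive}, a measure that is almost expansive at scale $\eps$ is almost entropy expansive, so $h(\Gamma_\eps(x))=0$ for $\mu_n$-a.e.\ $x$. Now apply Bowen's estimate $h_{\nu}(f_1)\le h_\nu(f_1,\mathcal{A})+\sup_x h(\Gamma_\eps(x))$, in its measure-theoretic version where the supremum is replaced by an essential supremum over $\nu$-typical $x$ (Ledrappier's or Downarowicz's formulation of the tail/conditional entropy bound). Let $\mathcal{A}^{(\infty)}$ be the two-sided refinement $\bigvee_{k\in\mathbb{Z}} f_{k}^{-1}\mathcal{A}$. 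Each of its atoms lies in $\Gamma_{\eps'}(x)$ for the time-one map, and by continuity of the flow this set is contained in $\Gamma_{\eps''}(x)$ for the flow. Since $\mu_n$ is almost expansive, $\Gamma(x)\subset f_{[-s,s]}(x)$ for $\mu_n$-a.e.\ $x$, so the atoms are essentially pieces of the orbit and carry zero conditional entropy. Hence $h_{\mu_n}(f_1\mid\mathcal{A}^{(\infty)})=0$, which gives the claimed equality. This is the step I expect to be the main obstacle. The difficulty is matching scales: the Bowen ball for $f_1$ versus the flow's $\Gamma_\eps$ needs an adjustment via uniform continuity on $[0,1]$, so I would shrink the diameter of $\mathcal{A}$ accordingly, and the conditional entropy bound must be justified almost everywhere. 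Once this is settled, the hypothesis of Theorem \ref{thm:weprove} shows the paper treats the equality $h_\mu(f_t,\mathcal{A}_t)=h_\mu(f_t)$ as a standard consequence.

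Third, I run the Misiurewicz argument. For fixed $N$, $h_{\mu_n}(f_1,\mathcal{A})\le \frac1N H_{\mu_n}\bigl(\bigvee_{k=0}^{N-1}f_1^{-k}\mathcal{A}\bigr)$. Every atom of the join has boundary of $\mu$-measure zero, so weak$^*$ convergence yields $H_{\mu_n}(\cdot)\to H_\mu(\cdot)$. Therefore
$$\limsup_n h_{\mu_n}(F)\le \tfrac1N H_\mu\Bigl(\bigvee_{k=0}^{N-1}f_1^{-k}\mathcal{A}\Bigr),$$
and letting $N\to\infty$ gives $\limsup_n h_{\mu_n}(F)\le h_\mu(f_1,\mathcal{A})\le h_\mu(F)$. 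Note that no almost expansiveness of $\mu$ is needed; it is required only for the $\mu_n$.
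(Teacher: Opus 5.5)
Your proposal is correct and follows the paper's proof. The paper gets $h_{\mu_n}(f_1,\mathcal{A})=h_{\mu_n}(f_1)$ for any partition of $d_1$-diameter at most $\varepsilon$ directly from \cite[Proposition 3.4]{CT16}, which is exactly your argument that the atoms lie in short orbit segments, so the detour through almost entropy expansiveness and an essential-supremum Bowen inequality is unnecessary; it then runs the same Walters/Misiurewicz upper semicontinuity argument with a partition whose boundary is $\mu$-null, as your Step 3 does.
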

\begin{proof}
    Given any partition $\zeta$ for $X$ such that $\text{diam}(\zeta)\leq \eps$ in the metric $d_1$, by \cite[Proposition 3.4]{CT16}, we have $h_{\mu_n}(F,\zeta)=h_{\mu_n}(F)$ for all $n$. Given any $\gamma>0$, and any partition $\xi$ with $\text{diam}(\xi)\leq \eps$ in the metric $d_1$, let $N\in \mathbb{N}$ be such that
    $$
\frac{1}{N}H_{\mu}(\bigvee_{j=0}^{N-1}f_1^{-j}\xi)<h_{\mu}(f_1,\xi)+\frac{\gamma}{2}.
    $$
 By following the argument in the proof of \cite[Theorem 8.2]{Wal82}, we can construct a corresponding partition $\xi'=\xi'(\xi,N)$ satisfying $\text{diam}(\xi')\leq \eps$ and $\mu(\partial \xi')=0$ such that
    $$
\limsup_{n\to \infty}h_{\mu_n}(F)=\limsup_{n\to \infty}h_{\mu_n}(f_1,\xi')<\frac{1}{N}H_{\mu}(\bigvee_{j=0}^{N-1}f_1^{-j}\xi)+\frac{\gamma}{2}<h_{\mu}(f_1,\xi)+\gamma,
    $$
    which concludes the lemma by making $\gamma\to 0$.
\end{proof}
As an immediate corollary, we have
\begin{corollary} \label{coro:entropyusc}
    Let $\{\varphi_n\}_{n\in \mathbb{N}}$ be a sequence of potentials satisfying 
    $$P_{\exp}^{\perp}(\varphi_n,\eps)<P(\varphi_n) \text{\ for some\ } \eps>0.$$ 
    If $(\mu_n)_{n\in \mathbb{N}}$ is a sequence of equilibrium states for $\ph_n$ respectively, that converges to some $\mu\in \mathcal{M}_F(X)$ as $n\to \infty$ in weak*-topology. Then 
    $$\limsup_{n\to \infty}h_{\mu_n}(F)\leq h_{\mu}(F).$$
\end{corollary}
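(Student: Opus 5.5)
The plan is to reduce the corollary to Lemma \ref{lem:usc}. Once we know that each equilibrium state $\mu_n$ is almost expansive at the common scale $\eps$, the lemma applies directly to the convergent sequence $\mu_n\to\mu$ and gives $\limsup_{n\to\infty}h_{\mu_n}(F)\le h_\mu(F)$. The whole task is therefore to verify that hypothesis, i.e.\ $\mu_n(\text{NE}(\eps))=0$ for every $n$.

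First we reduce to ergodic measures. Fix $n$. The set of equilibrium states of $\ph_n$ is a face of $\mathcal{M}_F(X)$: the functional $\nu\mapsto h_\nu(f_1)+\int\ph_n\,d\nu$ is affine, so almost every ergodic component of $\mu_n$ is again an equilibrium state. We will use that this functional is affine along the ergodic decomposition (Jacobs' theorem for $h_\nu(f_1)$) and that $h_\nu(F)=h_\nu(f_1)$. Given this, if $\nu$ is an ergodic component with $h_\nu(f_1)+\int\ph_n\,d\nu<P(\ph_n)$ on a set of positive weight, the average would fall below $P(\ph_n)$. That contradicts the variational principle, which identifies $P(\ph_n)$ with the supremum of this functional.

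Next we apply the defining property of the obstruction pressure. Let $\nu$ be an ergodic component of $\mu_n$ that is an equilibrium state. Then
$$h_\nu(f_1)+\int\ph_n\,d\nu=P(\ph_n)>P^{\perp}_{\exp}(\ph_n,\eps).$$
By the remark following Definition \ref{def:entropyexpansive}, an ergodic measure whose free energy exceeds $P^{\perp}_{\exp}(\ph_n,\eps)$ cannot give full mass to $\text{NE}(\eps)$. Since $\text{NE}(\eps)$ is $F$-invariant and $\nu$ is ergodic, $\nu(\text{NE}(\eps))\in\{0,1\}$, so $\nu(\text{NE}(\eps))=0$. Integrating over the ergodic decomposition gives $\mu_n(\text{NE}(\eps))=0$, so $\mu_n$ is almost expansive at scale $\eps$.

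Finally, Lemma \ref{lem:usc} applied to $\mu_n\to\mu$ yields the stated inequality. The only points that need care are the measurability and invariance of $\text{NE}(\eps)$, which are standard (cf.\ \cite{CT16}), and the use of the same $\eps$ for all $n$, which the hypothesis provides. No step is a real obstacle; the corollary is a bookkeeping consequence of the lemma.
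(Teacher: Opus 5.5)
Your proposal is correct and is the argument the paper leaves implicit when it calls this an immediate corollary of Lemma \ref{lem:usc}. The pressure gap at the common scale $\eps$ forces each equilibrium state $\mu_n$ to satisfy $\mu_n(\text{NE}(\eps))=0$, after which the lemma applies directly; your ergodic-decomposition step usefully supplies the detail the paper omits, namely passing from ergodic to possibly non-ergodic equilibrium states.
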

In the mean time, we have
\begin{corollary} \label{coro:Uhopen}
    If $h_{\exp}^{\perp}<h$, given any $H\in (h_{\exp}^{\perp},h)$, the set 
    $$U_h:=\{\mu\in \overline{\mathcal{M}_F^e(X)}:h_{\mu}(F)\in [0,H)\}$$ 
    is an open subset of $\overline{\mathcal{M}_F^e(X)}$.
\end{corollary}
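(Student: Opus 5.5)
The plan is to show that the complement
$$\overline{\mathcal{M}_F^e(X)}\setminus U_h=\{\mu\in \overline{\mathcal{M}_F^e(X)}: h_\mu(F)\ge H\}$$
is closed. Equivalently, if $\mu_n\in\overline{\mathcal{M}_F^e(X)}$, $h_{\mu_n}(F)\ge H$ and $\mu_n\to\mu$ in the weak$^*$-topology, then $h_\mu(F)\ge H$. The tool is Lemma \ref{lem:usc}, so everything reduces to producing the almost expansiveness that the lemma requires.

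First I fix the scale. Since $H>h_{\exp}^{\perp}=\lim_{\eps\to0}P^{\perp}_{\exp}(0,\eps)$, there is $\eps>0$ with $P^{\perp}_{\exp}(0,\eps)<H$. The set $\text{NE}(\eps)$ is $F$-invariant, so every ergodic $\nu$ gives it measure $0$ or $1$. Hence every ergodic $\nu$ with $h_\nu(F)\ge H>P^{\perp}_{\exp}(0,\eps)$ satisfies $\nu(\text{NE}(\eps))=0$, i.e.\ it is almost expansive at scale $\eps$. This is exactly the remark following Definition \ref{def:entropyexpansive}. Consequently, in the case where the $\mu_n$ are ergodic, Lemma \ref{lem:usc} gives $H\le \limsup_n h_{\mu_n}(F)\le h_\mu(F)$, which is the desired closedness.

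For general $\mu_n$ in the closure, which need not be ergodic, I would use the ergodic decomposition. Write
$$\mu_n=a_n\mu_n'+(1-a_n)\nu_n,$$
where $\mu_n'$ collects the ergodic components with $\text{NE}(\eps)$-measure $0$, so it is almost expansive at scale $\eps$, and $\nu_n$ collects the components supported on $\text{NE}(\eps)$. Each component of $\nu_n$ has entropy at most $P^{\perp}_{\exp}(0,\eps)$, so $h_{\nu_n}\le P^{\perp}_{\exp}(0,\eps)$. Affinity of entropy then gives
$$H\le a_n h_{\mu_n'}+(1-a_n)P^{\perp}_{\exp}(0,\eps).$$
Next I pass to a subsequence with $a_n\to a$, $\mu_n'\to\mu'$ and $\nu_n\to\nu$, so that $\mu=a\mu'+(1-a)\nu$. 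Applying Lemma \ref{lem:usc} to $(\mu_n')$ bounds $\limsup_n h_{\mu_n'}$ by $h_{\mu'}$, and $h_\mu\ge a\,h_{\mu'}$.

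The main obstacle is this last step. The inequality obtained only yields $h_\mu\ge H-(1-a)P^{\perp}_{\exp}(0,\eps)$, which is short of $H$ whenever $a<1$. To close the gap I would try to bound $h_\nu$ from below, or to show $a=1$. Failing that, I would approximate each $\mu_n$ by ergodic measures of entropy close to $h_{\mu_n}$ (for instance via a diagonal argument inside $\overline{\mathcal{M}_F^e(X)}$) so as to reduce to the ergodic case already settled. If neither works, the statement should be read, and applied later in the paper, for ergodic limits approximating the closure, where the argument of the second paragraph is complete.
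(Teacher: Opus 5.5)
\textbf{Verdict.} Your proposal does not prove the corollary for non-ergodic measures in the closure. The gap you flag there is genuine, and the paper's own proof does not address it either.

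\textbf{Comparison with the paper.} Your ergodic-case argument is exactly the paper's entire proof. The paper fixes $\eps$ with $P^{\perp}_{\exp}(0,\eps)<H$. It then declares every $\mu_n$ in the complement almost expansive at scale $\eps$ because $h_{\mu_n}(F)\ge H$, and applies Lemma \ref{lem:usc}. It does this for arbitrary $\mu_n\in\overline{\mathcal{M}_F^e(X)}$, which is precisely the step you correctly decline to take. $P^{\perp}_{\exp}$ is a supremum over ergodic measures, and the $0$--$1$ law for the invariant set $\text{NE}(\eps)$ needs ergodicity. So for non-ergodic $\mu_n$, the inequality $h_{\mu_n}(F)\ge H$ says nothing about $\mu_n(\text{NE}(\eps))$.

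\textbf{Why your three repairs fail.} None of them can work from the hypothesis $h^{\perp}_{\exp}<h$ alone.
\begin{itemize}
\item Showing $a=1$ is false in general. Elements of $\overline{\mathcal{M}_F^e(X)}$ may have ergodic components both on and off $\text{NE}(\eps)$, for instance whenever ergodic measures are dense in $\mathcal{M}_F(X)$.
\item A lower bound $h_\nu(F)\ge\limsup_n h_{\nu_n}(F)$ is upper semicontinuity of entropy along measures carried by $\text{NE}(\eps)$. That is exactly what is unavailable there.
\item Approximating $\mu_n$ by ergodic measures whose entropies converge to $h_{\mu_n}(F)$ is entropy density, which is an extra hypothesis.
\end{itemize}

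\textbf{How the statement can fail.} Your estimate shows the mechanism. Suppose there are ergodic $\nu_n\to\nu$ with $h_{\nu_n}(F)=c>0=h_\nu(F)$; by Lemma \ref{lem:usc}, no subsequence of them is almost expansive at a common scale. Let $\mu'$ be ergodic with $a\,h_{\mu'}(F)>h^{\perp}_{\exp}$ for some $a\in(0,1)$.
\begin{itemize}
\item Then $\mu_n:=a\mu'+(1-a)\nu_n$ has entropy $a\,h_{\mu'}(F)+(1-a)c$.
\item Its limit $a\mu'+(1-a)\nu$ has entropy only $a\,h_{\mu'}(F)$.
\item Any $H$ strictly between these two values lies in $(h^{\perp}_{\exp},h)$.
\end{itemize}
Such an $H$ violates closedness of the complement as soon as the $\mu_n$ lie in $\overline{\mathcal{M}_F^e(X)}$, and nothing in $h^{\perp}_{\exp}<h$ rules this out.

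\textbf{What can be salvaged.} The corollary as stated needs an extra hypothesis, such as entropy expansiveness (type \ref{E1}, where entropy is upper semicontinuous on all of $\mathcal{M}_F(X)$) or entropy density. Alternatively it must be weakened to what your second paragraph actually proves: a weak$^*$ limit of \emph{ergodic} measures with entropy $\ge H$ has entropy $\ge H$. Your closing ``reading'' is this weaker statement, not a proof of the stated one. The weaker statement does suffice for the $G_\delta$ part of Theorem \ref{thm:main}. The reason is that a $\ph$-maximizing measure with entropy $\ge H'$ has an ergodic $\ph$-maximizing component with entropy $\ge H'$, by ergodic decomposition and Lemma \ref{lem:barycentre support}.
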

\begin{proof}
Let $\eps>0$ be such that $h_{\exp}^{\perp}(\eps)<H$. It suffices to show that $\overline{\mathcal{M}_F^e(X)}\setminus U_h$ is closed in $\overline{\mathcal{M}_F^e(X)}$. To see this, for any sequence $\{\mu_n\}_{n\in \mathbb{N}}$ in $\overline{\mathcal{M}_F^e(X)}\setminus U_h$ such that $\mu_n\to \mu$ as $n\to \infty$ in the weak*-topology, by noticing that $h_{\mu_n}(F)\geq H>h_{\exp}^{\perp}(\eps)$, we know each $\mu_n$ is almost expansive at scale $\eps$, which by Lemma \ref{lem:usc} implies that $h_{\mu}(F)\geq H$. It is also clear that $\mu\in \overline{\mathcal{M}_F^e(X)}$. Consequently, we know $\mu\in \overline{\mathcal{M}_F^e(X)}\setminus U_h$, concluding $U_h$ being open in $\overline{\mathcal{M}_F^e(X)}$.
\end{proof}

\subsection{Limit behavior of equilibrium states}
    For a dynamical system $(X,F)$ with potential $\ph\in C(X,\mathbb{R})$, if $\alpha \ph$ has an equilibrium state $\mu_{\alpha \ph}$, we call any weak-*limit of $(\mu_{\alpha_n \ph})_{n\in \mathbb{N}}$ with $\alpha_n\uparrow \infty$ a \emph{ground state} for $(X,F,\ph)$. Such limit behavior has rich physical intuitions behind its name. In fact, as $\alpha$ is often interpreted as the inverse of temperature, the process of making $\alpha$ grow to infinity can be viewed as an approximation to zero temperature limit. 

    The following results concern how the entropy of the ground states behave in our situation.
    \begin{proposition} \label{prop:zerotemperature}
        Let $(X,F)$ be a continuous flow on a compact metric space. Let $\ph\in C(X,\mathbb{R})$, $(\alpha_n)_{n\in \mathbb{N}}$ be a sequence growing to infinity, and $(\mu_{\alpha_n \ph})_{n\in \mathbb{N}}$ be a sequence of equilibrium states for $(\alpha_n \ph)_{n\in \mathbb{N}}$ respectively, that converges to some $\mu\in \mathcal{M}_F(X)$ in weak-* topology as $n\to \infty$. In addition, each $\mu_{\alpha_n \ph}$ is almost expansive at some given scale $\eps>0$. Then 
        \begin{enumerate}
            \item $\mu\in \mathcal{M}_{\max}(\ph)$;
            \item $h_{\mu}(F)=\sup\{h_{\nu}(F):\nu\in \mathcal{M}_{\max}(\ph)\}$;
            \item $\lim_{n\to \infty}h_{\mu_{\alpha_n\ph}}(F)=h_{\mu}(F)$.
        \end{enumerate}
    \end{proposition}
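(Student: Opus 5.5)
The plan is to work with the variational principle for the potentials $\alpha_n\ph$ and exploit two facts: upper semi-continuity of entropy along almost expansive sequences (Lemma \ref{lem:usc}), and the homogeneity $P(\alpha\ph)/\alpha\to\Lambda_F(\ph)$. Write $\mu_n:=\mu_{\alpha_n\ph}$.

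\emph{Step 1: $\mu\in\mathcal{M}_{\max}(\ph)$.} Fix any $\nu\in\mathcal{M}_{\max}(\ph)$; one exists by compactness and weak-* continuity of $\nu\mapsto\int\ph\,d\nu$. Since $\mu_n$ is an equilibrium state,
$$h_{\mu_n}(F)+\alpha_n\int\ph\,d\mu_n\ \ge\ h_\nu(F)+\alpha_n\Lambda_F(\ph)\ \ge\ \alpha_n\Lambda_F(\ph).$$
Because the flow has no fixed points, $h_{\mu_n}(F)\le h_{\text{top}}(F)<\infty$. Dividing by $\alpha_n$ gives $\int\ph\,d\mu_n\ge \Lambda_F(\ph)-h_{\text{top}}(F)/\alpha_n$. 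Letting $n\to\infty$ and using weak-* convergence, $\int\ph\,d\mu\ge\Lambda_F(\ph)$, which proves (1).

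\emph{Step 2: (2) and (3).} Take any $\nu\in\mathcal{M}_{\max}(\ph)$. The equilibrium inequality, together with $\int\ph\,d\mu_n\le\Lambda_F(\ph)=\int\ph\,d\nu$, gives
$$h_{\mu_n}(F)\ \ge\ h_\nu(F)+\alpha_n\Big(\Lambda_F(\ph)-\int\ph\,d\mu_n\Big)\ \ge\ h_\nu(F).$$
Hence $\liminf_n h_{\mu_n}(F)\ge\sup\{h_\nu(F):\nu\in\mathcal{M}_{\max}(\ph)\}$. Since every $\mu_n$ is almost expansive at scale $\eps$, Lemma \ref{lem:usc} yields $\limsup_n h_{\mu_n}(F)\le h_\mu(F)$. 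By Step 1, $\mu$ is itself maximizing, so $h_\mu(F)\le\sup\{h_\nu(F):\nu\in\mathcal{M}_{\max}(\ph)\}$. Chaining these inequalities gives
$$\sup_\nu h_\nu\le\liminf h_{\mu_n}\le\limsup h_{\mu_n}\le h_\mu\le\sup_\nu h_\nu,$$
so all quantities coincide. This gives (2) and (3) together.

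\emph{Main obstacle.} The only nontrivial point is the upper semi-continuity in Step 2, which fails in general for non-expansive flows. The hypothesis that each $\mu_n$ is almost expansive at a uniform scale $\eps$ is exactly what Lemma \ref{lem:usc} needs: it makes a single partition of $d_1$-diameter $\le\eps$ generating for every $\mu_n$, via \cite[Proposition 3.4]{CT16}. I would check that the lemma applies with measures that need not be ergodic; this holds if almost expansiveness is read as $\mu_n(\text{NE}(\eps))=0$, as in the lemma's hypothesis. A minor point is the finiteness of $h_{\text{top}}(F)$, which holds for continuous flows on compact metric spaces, since it equals $h_{\text{top}}(f_1)$, and this time-one map entropy is finite in the settings considered.
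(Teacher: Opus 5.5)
Your proof is correct and takes essentially the same approach as the paper, using the equilibrium-state inequality, the maximizing property, and Lemma \ref{lem:usc}. The paper proves (2) by contradiction and uses $h_{\mu_n}(F)\ge h_\nu(F)$ only with $\nu=\mu$, for (3); you get (2) and (3) together from that inequality for every $\nu\in\mathcal{M}_{\max}(\ph)$ and a single liminf/limsup chain.

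One correction: the bound on $h_{\mu_n}(F)$ in Step 1 has nothing to do with the absence of fixed points, which is not assumed in this proposition. It follows from the hypothesis itself: almost expansiveness at scale $\eps$ gives $h_{\mu_n}(F)=h_{\mu_n}(f_1,\zeta)\le\log\#\zeta$ for a fixed finite partition $\zeta$ of small $d_1$-diameter. The paper also uses this bound, though only implicitly.
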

\begin{proof}
    For any $n\in \mathbb{N}$, since $\mu_{\alpha_n \ph}$ is an equilibrium state for $\alpha_n \ph$, for any $\nu\in \mathcal{M}_F(X)$, we have 
    $$
h_{\mu_{\alpha_n\ph}}(F)+\alpha_n\int \ph d\mu_{\alpha_n\ph}\geq h_{\nu}(F)+\alpha_n\int \ph d\nu.
    $$
    Dividing both sides by $\alpha_n$ and making $n\to \infty$, since $\alpha_n\to \infty$ and $\mu_{\alpha_n \ph}$ converges to $\mu$, we have $\int \ph d\mu\geq \int \ph d\nu$, which concludes (1) by the arbitrary choice of $\nu$.

    To prove (2), assume by contradiction that there exists some $\nu\in \mathcal{M}_{\max}(\ph)$ such that $h_{\nu}(F)>h_{\mu}(F)$. It then follows from Lemma \ref{lem:usc} that $h_{\nu}(F)>h_{\mu_{\alpha_n\ph}}(F)$ for all sufficiently large $n$. Since $\nu$ is $\ph$-maximizing, we also have $\int\ph d\nu\geq \int\ph d\mu_{\alpha_n \ph}$. Consequently, we have
    $$
h_{\nu}(F)+\alpha_n\int\ph d\nu>h_{\mu_{\alpha_n \ph}}(F)+\alpha_n\int\ph d\mu_{\alpha_n \ph}
    $$
   for all sufficiently large $n$, contradicting to the assumption that $\mu_{\alpha_n \ph}$ is an equilibrium state for $\alpha_n\ph$.

    Finally, to prove (3), notice that by Lemma \ref{lem:usc}, we only need to prove 
    \begin{equation} \label{eq:liminfentropy}
        \liminf_{n\to \infty} h_{\mu_{\alpha_n \ph}}(F)\geq h_{\mu}(F).
    \end{equation}
    Observe that $h_{\mu}(F)+\alpha_n\int \ph d\mu\leq h_{\mu_{\alpha_n \ph}}(F)+\alpha_n\int\ph d\mu_{\alpha_n \ph}$ for all $n$, which together with $\mu\in \mathcal{M}_{\max}(\ph)$ implies that $h_{\mu}(F)\leq h_{\mu_{\alpha_n \ph}}(F)$ for all $n$. This implies \eqref{eq:liminfentropy} immediately.
\end{proof}

\begin{lemma} \label{lem:limites}
    Let $F=(f_t)_{t\in \mathbb{R}}$ be a continuous flow over a compact metric space $X$, and let $\ph\in C(X,\mathbb{R})$ be a potential function with a unique equilibrium state $\mu_{\ph}$. Suppose $(\ph_n)_{n\in \mathbb{N}}$ is a sequence in $C(X,\mathbb{R})$, each with a unique equilibrium state $\mu_{\ph_n}$ such that
    \begin{itemize}
        \item there exists some $\eps>0$ such that each $\mu_{\ph_n}$ is almost expansive at scale $\eps$,
        \item $\lim_{n\to \infty}||\ph_n-\ph||_{C^0}=0$.
    \end{itemize}
    Then $(\mu_{\ph_n})_n$ converges to $\mu_{\ph}$ as $n\to \infty$ in weak-* topology.
\end{lemma}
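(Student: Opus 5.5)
The plan is a standard compactness argument. Since $\mathcal{M}_F(X)$ is compact in the weak-* topology, it suffices to show that every weak-* accumulation point of $(\mu_{\ph_n})_n$ equals $\mu_\ph$. So I would pass to a subsequence with $\mu_{\ph_{n_k}}\to\nu$ for some $\nu\in\mathcal{M}_F(X)$, and then show that $\nu$ is an equilibrium state for $\ph$. Uniqueness then forces $\nu=\mu_\ph$, which gives convergence of the whole sequence.

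\textbf{Variational inequality at the limit.} Because $\mu_{\ph_n}$ is an equilibrium state for $\ph_n$,
$$
h_{\mu_{\ph_n}}(F)+\int\ph_n\,d\mu_{\ph_n}=P(\ph_n).
$$
Pressure is $1$-Lipschitz in the $C^0$ norm, i.e.\ $|P(\ph_n)-P(\ph)|\le\|\ph_n-\ph\|_{C^0}$, by the variational principle or directly from the partition functions. Hence $P(\ph_n)\to P(\ph)$. Next,
$$
\Big|\int\ph_n\,d\mu_{\ph_n}-\int\ph\,d\nu\Big|\le\|\ph_n-\ph\|_{C^0}+\Big|\int\ph\,d\mu_{\ph_n}-\int\ph\,d\nu\Big|,
$$
and both terms tend to $0$ along the subsequence, the second by weak-* convergence. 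Together these give
$$
\lim_k h_{\mu_{\ph_{n_k}}}(F)=P(\ph)-\int\ph\,d\nu.
$$

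\textbf{Upper semicontinuity.} By hypothesis each $\mu_{\ph_n}$ is almost expansive at the common scale $\eps$. Lemma~\ref{lem:usc} therefore applies and yields $h_\nu(F)\ge\limsup_k h_{\mu_{\ph_{n_k}}}(F)$, so $h_\nu(F)+\int\ph\,d\nu\ge P(\ph)$. The variational principle gives the reverse inequality, so $\nu$ is an equilibrium state for $\ph$, and hence $\nu=\mu_\ph$.

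\textbf{Main obstacle.} Since the flow need not be entropy expansive, upper semicontinuity of the entropy map is not automatic. This is exactly where Lemma~\ref{lem:usc} and the uniform scale $\eps$ are needed. A minor point is that Lemma~\ref{lem:usc} is stated for measures that are almost expansive at scale $\eps$, not necessarily ergodic ones. That matches the hypothesis here, since the equilibrium states $\mu_{\ph_n}$ may a priori fail to be ergodic.
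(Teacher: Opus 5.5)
Your proposal is correct and follows essentially the same route as the paper: pass to a weak-* convergent subsequence, use the $1$-Lipschitz dependence of pressure on the potential together with Lemma~\ref{lem:usc} at the common scale $\eps$ to show the limit is an equilibrium state for $\varphi$, and conclude by uniqueness. One aside: the caveat in your last paragraph is moot, because a unique equilibrium state is automatically ergodic (its ergodic components are themselves equilibrium states).
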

\begin{proof}
    Let $\mu'$ be an arbitrary  weak-* limit point of $(\mu_{\ph_n})_n$. It suffices to show that $\mu'$ must be an equilibrium state for $\ph$. To see this, letting $(\mu_{n_k})_{k\in \mathbb{N}}$ be the subsequence of $(\mu_{\ph_n})_n$ converging to $\mu'$ as $k\to \infty$ in weak-* topology, we have from Lemma \ref{lem:usc} that 
    $$
h_{\mu'}(F)\geq \limsup_{k\to \infty}h_{\mu_{n_k}}(F).
    $$
    Meanwhile, we have
\begin{equation*}
\begin{aligned}
&\lim_{n\to \infty}\Big|\int \ph_{n_k}d\mu_{n_k}-\int \ph d\mu'\Big|\\
\leq &\lim_{n\to \infty}\Big(\Big|\int \ph_{n_k}d\mu_{n_k}-\int \ph d\mu_{n_k}\Big|+\Big|\int \ph d\mu_{n_k}-\int \ph d\mu'\Big|\Big)=0.    
\end{aligned}
\end{equation*}
    Consequently, we have
    $$
h_{\mu'}(F)+\int \ph d\mu'\geq \limsup_{k\to \infty}(h_{\mu_{n_k}}(F)+\int \ph_{n_k}d\mu_{n_k})=\limsup_{k\to \infty} P(\ph_{n_k})=P(\ph),
    $$
    where the last equality follows from the general fact $|P(\psi_1)-P(\psi_2)|\leq ||\psi_1-\psi_2||_{C^0}$. This gives that $\mu'=\mu_\ph$ by our assumption on $\ph$ having a unique equilibrium state. 
\end{proof}

\subsection{Functional analysis}

For a continuous flow $F$ on a compact metric space $X$, the maximum  functional $\Lambda_F$ defined in \eqref{max} is convex and continuous on $C(X,\RR)$. Below we recall an approximation theorem about tangent functionals to convex functionals. 

For a Banach space $V$ with norm $\|\cdot\|$,  its corresponding space $V^*$ of all real-valued bounded functionals on $V$ can be normed naturally by associating each element $\mu \in V^*$ with a number
  $$ 	
  \|\mu\|=\sup\{|\mu(f)|: f \in V, \|f\|=1\}.
  $$

Notice that for $\mu, \nu \in V^*$ we call:
\begin{itemize}
\item $\mu$ is \emph{convex} if $\mu(tf+(1-t)g) \le t\mu(f)+(1-t)\mu(g)$ for all $ f, g \in V$;
\item $\mu$ is \emph{bounded} by $\nu$ if $\mu(f) \le \nu(f), \forall f \in V$;
\item $\mu$ is \emph{tangent} to $\nu$ at $f \in V$ if $\mu(g)\le \nu(f+g)-\nu(f)$ for all $g\in V$.
\end{itemize}

\begin{theorem}(\cite[Theorem 3.1]{STY} \cite[Theorem V.1.1]{Isr}) \label{thm:tangentapproximation}
Let V be a Banach space  and let $ \Lambda \in V^*$  be continuous and convex. For any $\mu_0 \in V^*$ bounded by $\Lambda$, $f_0 \in V$ and $ \eps >0$, there exist $\mu \in V^*$ and $f\in V$ such that $\mu$ is tangent to $\Lambda$ at $f$ and 
   $$
   \|\mu-\mu_0\| \le \eps \,, \mbox{and}\,\,  \|f-f_0\| \le \frac{1}{\eps}(\Lambda(f_0)-\mu_0(f_0)+s),  
   $$
where $s=\sup\{\mu_0(g)-\Lambda(g): g\in V\} \le 0$.
\end{theorem}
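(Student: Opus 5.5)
The plan is to obtain this as a Bishop--Phelps type statement from Ekeland's variational principle followed by a Hahn--Banach sandwich argument. Here $\mu_0$ is linear and continuous, and $\Lambda$ is a continuous convex functional (not necessarily linear). Consider the auxiliary functional
$$G(g):=\Lambda(g)-\mu_0(g),\qquad g\in V.$$
It is convex and continuous on $V$. By definition of $s$ we have $\inf_{g\in V}G(g)=-s$, so $G$ is bounded below, and $-s\ge 0$ because $\mu_0$ is bounded by $\Lambda$. Set
$$\delta:=G(f_0)-\inf G=\Lambda(f_0)-\mu_0(f_0)+s\ \ge 0 .$$

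\emph{Step 1 (Ekeland).} Apply Ekeland's variational principle on the complete metric space $V$ to the lower semicontinuous, bounded-below function $G$, with tolerance $\delta$ and parameter $\eps$. If $\delta=0$, simply take $f=f_0$. Otherwise Ekeland produces $f\in V$ with
$$\|f-f_0\|\le \frac{\delta}{\eps}=\frac{1}{\eps}\big(\Lambda(f_0)-\mu_0(f_0)+s\big)$$
such that $f$ is a strict $\eps$-minimizer:
$$G(g)\ge G(f)-\eps\|g-f\|\quad\text{for all } g\in V.$$
This already gives the required bound on $\|f-f_0\|$.

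\emph{Step 2 (sandwich).} Define $\psi(h):=G(f+h)-G(f)$. It is convex, continuous, satisfies $\psi(0)=0$, and by Step 1 satisfies $\psi(h)\ge -\eps\|h\|$. The function $-\eps\|\cdot\|$ is concave and continuous, so the open set $\{(h,r):r<-\eps\|h\|\}$ is a nonempty convex open set disjoint from the epigraph of $\psi$. The geometric Hahn--Banach theorem (the sandwich theorem) then yields a continuous linear functional $\ell\in V^*$ with
$$-\eps\|h\|\le \ell(h)\le \psi(h)\quad\text{for all } h\in V .$$
The separating hyperplane may be taken through the origin, since $\psi(0)=0$ and both functions vanish there; it is non-vertical because $-\eps\|\cdot\|$ is finite and continuous everywhere. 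The left inequality, applied to $\pm h$, gives $\|\ell\|\le\eps$.

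\emph{Step 3 (conclusion).} Put $\mu:=\mu_0+\ell$. Then $\|\mu-\mu_0\|=\|\ell\|\le\eps$. Moreover, for every $g\in V$,
$$\mu(g)=\mu_0(g)+\ell(g)\le \mu_0(g)+G(f+g)-G(f)=\Lambda(f+g)-\Lambda(f),$$
which says exactly that $\mu$ is tangent to $\Lambda$ at $f$.

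The only delicate point is Step 2: the sandwich requires a nonempty interior, which is supplied by the continuity of $\eps\|\cdot\|$ (or, alternatively, of $\psi$), and one must check that the separating hyperplane is non-vertical. Once this is verified, the remaining steps are routine.
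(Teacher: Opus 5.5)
Your proof is correct. Ekeland's principle applied to the continuous function $\Lambda-\mu_0$ (bounded below by $0$ because $\mu_0$ is bounded by $\Lambda$) yields $f$ with $\|f-f_0\|\le \frac{1}{\eps}(\Lambda(f_0)-\mu_0(f_0)+s)$. The sandwich theorem, whose affine part has its constant forced to be $0$, then gives $\ell$ with $\|\ell\|\le\eps$ such that $\mu_0+\ell$ is tangent to $\Lambda$ at $f$.

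The paper does not prove this statement itself; it only cites Israel and Shinoda--Takahasi--Yamamoto. Your route is essentially the standard Bishop--Phelps/Br{\o}ndsted--Rockafellar argument behind that citation: a maximal-point construction on the epigraph, of which Ekeland's principle is the functional form, followed by a Hahn--Banach separation.
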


The following lemma is well-known in the homeomorphism case; see \cite[Lemma 2.3]{Bre08}. The flow case shares the same idea. We include a proof in the Appendix for reader's convenience.

\begin{lemma} \label{lem:tangentmeasure}
    Let $F=(f_t)_{t\in \mathbb{R}}$ be a continuous flow over a compact metric space $X$, and let $\ph\in C(X,\mathbb{R})$. Then $\mu\in C(X,\mathbb{R})^*$ is tangent to $\Lambda_F$ at $\ph$ if and only if $\mu\in \mathcal{M}_F(X)$ and is $\ph$-maximizing. 
\end{lemma}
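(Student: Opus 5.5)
We prove the two implications separately, following the homeomorphism argument of \cite[Lemma 2.3]{Bre08}. The only flow-specific point is that invariance must be obtained under every time-$t$ map $f_t$, not under a single map.

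\emph{($\Leftarrow$).} Let $\mu\in\mathcal{M}_F(X)$ be $\ph$-maximizing and let $g\in C(X,\RR)$. Since $\mu$ is an admissible measure in the supremum defining $\Lambda_F(\ph+g)$, we have
\[
\mu(g)=\int(\ph+g)\,d\mu-\int\ph\,d\mu\le \Lambda_F(\ph+g)-\Lambda_F(\ph).
\]
This is exactly tangency of $\mu$ to $\Lambda_F$ at $\ph$.

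\emph{($\Rightarrow$).} Suppose $\mu(g)\le\Lambda_F(\ph+g)-\Lambda_F(\ph)$ for all $g\in C(X,\RR)$. We extract the required properties of $\mu$ in four steps, each by choosing a suitable test function $g$.

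\begin{enumerate}
\item \emph{Positivity.} If $g\le 0$, then $\Lambda_F(\ph+g)\le\Lambda_F(\ph)$, so $\mu(g)\le 0$. Hence $\mu$ is a positive functional, and by the Riesz representation theorem it is a finite positive Borel measure.
\item \emph{Normalization.} Take $g\equiv c$ for a constant $c\in\RR$. Then $\Lambda_F(\ph+c)=\Lambda_F(\ph)+c$, so $c\,\mu(1)\le c$. Using $c=\pm1$ gives $\mu(1)=1$, so $\mu$ is a probability measure.
\item \emph{Invariance.} Fix $t\in\RR$ and $h\in C(X,\RR)$, and put $g=\lambda(h\circ f_t-h)$ with $\lambda\in\RR$. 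Every $\nu\in\mathcal{M}_F(X)$ satisfies $\int g\,d\nu=0$, so $\Lambda_F(\ph+g)=\Lambda_F(\ph)$. Tangency then gives $\lambda\,\mu(h\circ f_t-h)\le 0$ for all $\lambda\in\RR$, which forces $\mu(h\circ f_t)=\mu(h)$. Since $t$ and $h$ are arbitrary, $\mu\in\mathcal{M}_F(X)$.
\item \emph{Maximization.} Take $g=-\ph$. Since $\Lambda_F(0)=0$, tangency gives $-\int\ph\,d\mu\le-\Lambda_F(\ph)$, that is, $\int\ph\,d\mu\ge\Lambda_F(\ph)$. The reverse inequality holds by definition of $\Lambda_F$ because $\mu\in\mathcal{M}_F(X)$. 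So $\mu$ is $\ph$-maximizing.
\end{enumerate}

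The only step needing any care is invariance. The key observation is that coboundaries of the form $h\circ f_t-h$ leave $\Lambda_F$ unchanged for every real $t$, since invariance under the flow means invariance under each $f_t$. Once this is seen, the argument is routine.
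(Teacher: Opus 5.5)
Your proof is correct and follows essentially the same route as the paper's: you test the tangency inequality against constants, flow coboundaries $h\circ f_t-h$, and $\pm\ph$. Your positivity step ($g\le 0\Rightarrow \mu(g)\le 0$) also supplies something the paper's argument omits. The paper only derives $\mu(1)=1$, $\mu(\ph)=\Lambda_F(\ph)$ and $\mu(h\circ f_t)=\mu(h)$, which by themselves show only that $\mu$ is an invariant signed measure of total mass one, not a probability measure.
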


The ergodic decomposition theorem says that for every $\mu\in \mathcal{M}_F(X)$, $\mu=\int_{\mathcal{M}_F^e(X)} \nu d b_\mu(\nu)$, where $b_\mu$ is a Borel probability measure on $\mathcal{M}_F^e(X)$. The next lemma asserts that the barycenter map $\mu \mapsto b_\mu$ is isometric. 

\begin{lemma} \label{lem:barycentre map}(\cite[Corollary IV.4.2]{Isr})
Let $F$ be a continuous flow on compact metric space $X$. For any $\mu, \nu \in \mathcal{M}_F(X)$, we have 
	$$
	\|b_\mu-b_\nu\|=\|\mu-\nu\|.
	$$
\end{lemma}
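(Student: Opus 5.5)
The statement is Lemma \ref{lem:barycentre map}: the barycenter map $\mu\mapsto b_\mu$ is an isometry for the dual norm, $\|b_\mu-b_\nu\|=\|\mu-\nu\|$. I would prove the two inequalities separately, using the ergodic decomposition and the fact that distinct ergodic measures are mutually singular.

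\emph{The inequality $\|\mu-\nu\|\le\|b_\mu-b_\nu\|$.} This follows from the barycenter formula. For $\ph\in C(X,\RR)$ with $\|\ph\|_{C^0}=1$, set $\hat\ph(m):=\int\ph\,dm$ on $\mathcal{M}_F^e(X)$. This function is Borel and satisfies $|\hat\ph|\le 1$. Then
$$\int\ph\,d\mu-\int\ph\,d\nu=\int\hat\ph\,d(b_\mu-b_\nu)\le\|b_\mu-b_\nu\|.$$
Here $\|b_\mu-b_\nu\|$ is the total variation norm, which is the dual norm against bounded Borel functions. Taking the supremum over $\ph$ gives the claim.

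\emph{The inequality $\|b_\mu-b_\nu\|\le\|\mu-\nu\|$.} This is the main step. Write $b_\mu-b_\nu=\beta^+-\beta^-$ as a Jordan decomposition, with $\beta^\pm$ mutually singular, supported on disjoint Borel sets $A^\pm\subset\mathcal{M}_F^e(X)$, and of equal mass $c=\frac12\|b_\mu-b_\nu\|$. Set $\rho^\pm:=\int m\,d\beta^\pm(m)$. Then $\mu-\nu=\rho^+-\rho^-$.

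The key claim is that $\rho^+\perp\rho^-$, which gives $\|\mu-\nu\|=\|\rho^+\|+\|\rho^-\|=2c$. To prove it, take the set of generic points $G_m$ for each ergodic $m$, defined via a countable dense family in $C(X,\RR)$. These sets are pairwise disjoint Borel sets with $m(G_m)=1$. Let $B^\pm:=\bigcup_{m\in A^\pm}G_m$. This set is Borel, being the preimage of $A^\pm$ under the Borel map $x\mapsto$ the empirical limit measure, defined on the Borel set of generic points. Then $\rho^\pm(B^\pm)=c$, while $B^+\cap B^-=\emptyset$, so $\rho^+(B^-)=0$.

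Hence the total variation norm, and therefore the dual norm on $C(X,\RR)^*$ (which equals total variation by the Riesz theorem), satisfies $\|\mu-\nu\|=2c=\|b_\mu-b_\nu\|$. Combining the two inequalities proves the lemma.

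\emph{Main obstacle.} The delicate point is the measurability needed for the generic-point map $x\mapsto\lim\frac1T\int_0^T\delta_{f_tx}\,dt$. The limit must exist on a Borel set and be a Borel map into $\mathcal{M}_F(X)$, so that $B^\pm$ are measurable. I would handle this by noting that the map is a pointwise limit of continuous maps evaluated along a countable dense set of test functions.
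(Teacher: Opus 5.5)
Your proof is correct. It takes a different route from the paper, which gives no argument of its own and simply imports the statement from Israel's book, where it belongs to the Choquet-theoretic treatment of decompositions of invariant states.

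The natural abstract mechanism in that framework is the simplex property of $\mathcal{M}_F(X)$. The Jordan parts of an invariant signed measure are again invariant, so invariant signed measures form a vector lattice normed by total variation. The map $\beta\mapsto\int m\,d\beta(m)$, from signed measures on $\mathcal{M}_F^e(X)$ to this lattice, is linear, positive and mass-preserving. Uniqueness of the ergodic decomposition makes it bijective with positive inverse. It is therefore a lattice isomorphism, which forces it to preserve total variation.

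You instead construct the Hahn decomposition of $\mu-\nu$ by hand. The sets $B^\pm$, built from generic points over the Hahn sets $A^\pm$ of $b_\mu-b_\nu$, show directly that $\rho^\pm=(\mu-\nu)^\pm$. The abstract route needs only uniqueness of the decomposition and avoids the measurability bookkeeping for the empirical-limit map. Yours is self-contained, rests only on Birkhoff's theorem for flows, and yields explicit disjoint carriers.

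Three minor remarks:
\begin{enumerate}
\item Your second step already gives the equality $\|\mu-\nu\|=2c=\|b_\mu-b_\nu\|$, so the first inequality is redundant.
\item $\hat\ph$ is in fact weak$^*$-continuous on $\mathcal{M}_F(X)$, not merely Borel.
\item For the measurability you flag, it suffices to take limits along integer times. The map $x\mapsto\frac1n\int_0^n g(f_tx)\,dt$ is continuous, and passing from $\lfloor T\rfloor$ to $T$ changes the average by at most $O(\|g\|_{C^0}/T)$.
\end{enumerate}
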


Without difficulty, \cite[Lemma 3.3]{STY} can easily be generalized to the flow case. We simply cite the lemma below and omit the proof. 

\begin{lemma} \label{lem:barycentre support}(\cite[Lemma 3.3]{STY})
Let $F$ be a continuous flow on compact metric space $X$,  $\mu$ an invariant probability measure and $b_\mu$ just defined. We have 
\begin{itemize}
\item	If there exists a constant $C \ge 0$ such that $h(\nu) \le C$ for all $v\in supp(b_\mu)$, then $h(\mu) \le C$;
\item If $\mu \in \mathcal{M}_{\text{max}}(\ph)$, then $\text{supp}(b_\mu)$ is contained in  $\mathcal{M}_{\text{max}}(\ph)$.
\end{itemize}
\end{lemma}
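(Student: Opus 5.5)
We must prove Lemma \ref{lem:barycentre support}: (i) if $h(\nu)\le C$ for all $\nu\in\supp(b_\mu)$, then $h(\mu)\le C$; (ii) if $\mu\in\mathcal{M}_{\max}(\ph)$, then $\supp(b_\mu)\subset\mathcal{M}_{\max}(\ph)$.

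For (i) we will use the Jacobs-type formula for the entropy of the time-one map, $h_\mu(f_1)=\int_{\mathcal{M}_F^e(X)} h_\nu(f_1)\,db_\mu(\nu)$, together with $h_\mu(F)=h_\mu(f_1)$. Two points need care.

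The first is that the ergodic decomposition of $\mu$ with respect to the flow $F$ is generally coarser than the decomposition with respect to $f_1$. An $F$-ergodic $\nu$ need not be $f_1$-ergodic. However, $\nu$ is $f_1$-invariant, and its $f_1$-ergodic components $\nu'$ have the form $(f_s)_*\nu''$ for a fixed $\nu''$ and $s\in[0,1)$. All such components have the same $f_1$-entropy because $f_s$ commutes with $f_1$, so $h_\nu(f_1)$ equals that common value. Applying Jacobs' theorem for $f_1$ to the two-step decomposition (first $b_\mu$, then the $f_1$-decomposition of each $\nu$) gives the integral formula over $b_\mu$. Equivalently, since entropy is affine on invariant measures, the formula follows from the standard extension of affinity to integrals. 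This works because $\nu\mapsto h_\nu(f_1)$ is a Borel map: it is a countable limit of the upper semicontinuous functions $\nu\mapsto h_\nu(f_1,\xi_k)$ for refining partitions.

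The second point is that, since $b_\mu(\supp b_\mu)=1$, we get $h_\mu(F)\le\int_{\supp b_\mu} C\,db_\mu=C$.

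For (ii), suppose $\nu_0\in\supp(b_\mu)$ but $\int\ph\,d\nu_0<\Lambda_F(\ph)$. By continuity of $\nu\mapsto\int\ph\,d\nu$ in the weak-* topology, there is an open neighborhood $W$ of $\nu_0$ and some $\delta>0$ with $\int\ph\,d\nu<\Lambda_F(\ph)-\delta$ for all $\nu\in W$. Since $\nu_0$ lies in the support, $b_\mu(W)>0$. Every $\nu$ satisfies $\int\ph\,d\nu\le\Lambda_F(\ph)$. Therefore
$$\int\ph\,d\mu=\int\Big(\int\ph\,d\nu\Big)db_\mu(\nu)\le \Lambda_F(\ph)-\delta\, b_\mu(W)<\Lambda_F(\ph),$$
which contradicts $\mu\in\mathcal{M}_{\max}(\ph)$. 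Here $\supp(b_\mu)$ is taken in $\mathcal{M}_F(X)$, that is, inside the closure of $\mathcal{M}_F^e(X)$, and the argument applies there unchanged.

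The main obstacle is the entropy integral formula in (i) for flows, specifically the mismatch between the $F$-ergodic and $f_1$-ergodic decompositions. I would handle it with the commuting-$f_s$ observation above, or alternatively by directly citing the affine-integral property of entropy.
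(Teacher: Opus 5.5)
Your proof is correct and is essentially the argument the paper defers to via \cite[Lemma 3.3]{STY}, adapted to flows. Part (i) is Jacobs' formula $h_\mu(f_1)=\int h_\nu(f_1)\,db_\mu(\nu)$, which you obtain correctly by applying Jacobs for $f_1$ to $\mu$ and to each $\nu$ and using uniqueness of the $f_1$-ergodic decomposition (the commuting-$f_s$ remark is not needed), and part (ii) is the standard argument that a neighborhood of positive $b_\mu$-mass would strictly lower $\int\ph\,d\mu$. Keep that route and drop the ``equivalently'' justification: $\nu\mapsto h_\nu(f_1,\xi)$ is upper semicontinuous only at measures not charging $\partial\xi$, and Borel measurability plus affinity does not by itself imply the integral formula.
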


\subsection{Some generic results for flows}

We also need the flow version of two generic results  \cite[Theorem 1.1]{Con} and \cite[Theorem 1.1]{Morris2}. The proofs are provided in the Appendix.

\begin{proposition} (Cf. \cite[Theorem 1.1]{Con})\label{thm:con}
Let $(X,F)$ be a continuous flow on a compact metric space, and $E$ be a topological vector space which is densely and continuously embedded in $C(X,\RR)$. 
Write
$$U(E):=\{\ph \in E: \text{there is a unique $\ph$-maximizing measure}\}.$$ 
Then $U(E)$ is a countable intersection of open and dense sets in $E$. If moreover $E$ is a Baire space, then $U(E)$ is dense in $E$.
\end{proposition}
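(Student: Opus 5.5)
We follow Conze--Guivarc'h's argument, adapted to flows, and use Lemma \ref{lem:tangentmeasure} to identify maximizing measures with tangent functionals. Fix a countable set $\{g_k\}_{k\in\NN}$ that is dense in $C(X,\RR)$; it exists because $X$ is compact metric. For $\ph\in C(X,\RR)$ and $k,m\in\NN$, we say that $(k,m)$ \emph{separates} $\mathcal{M}_{\max}(\ph)$ if there are $\mu,\nu\in\mathcal{M}_{\max}(\ph)$ with
$$\int g_k\,d\mu-\int g_k\,d\nu\ge 1/m .$$
Set
$$A_{k,m}:=\{\ph\in E:\ (k,m)\text{ separates }\mathcal{M}_{\max}(\ph)\},$$
and $O_{k,m}:=E\setminus A_{k,m}$. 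Two distinct invariant measures differ on some $g_k$ by a positive amount, so $U(E)=\bigcap_{k,m}O_{k,m}$. It therefore suffices to show that each $O_{k,m}$ is open and dense in $E$. The Baire statement then follows at once.

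\emph{Openness.} We show $A_{k,m}$ is closed in $E$. Suppose $\ph_n\to\ph$ in $E$, so also in $C^0$ since the embedding is continuous. Take witnesses $\mu_n,\nu_n\in\mathcal{M}_{\max}(\ph_n)$ and pass to weak-* convergent subsequences $\mu_n\to\mu$, $\nu_n\to\nu$. Invariance is preserved under weak-* limits. Since $\Lambda_F$ is $1$-Lipschitz in $C^0$, we get $\int\ph\,d\mu=\lim\int\ph_n\,d\mu_n=\lim\Lambda_F(\ph_n)=\Lambda_F(\ph)$, so $\mu\in\mathcal{M}_{\max}(\ph)$, and likewise for $\nu$. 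The inequality for $g_k$ passes to the limit, so $\ph\in A_{k,m}$.

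\emph{Density.} This is the main step. Fix $\ph\in E$ and a neighborhood of $\ph$ in $E$. Since $E$ is dense in $C(X,\RR)$, choose $e\in E$ with $\|e-g_k\|_{C^0}<1/(4m)$. Consider the maps $\beta\mapsto\ph+\beta e$ for $\beta\in\RR$. The function $c(\beta):=\Lambda_F(\ph+\beta e)$ is convex in $\beta$, so it is differentiable except at countably many $\beta$. At a point of differentiability $\beta$, every $\mu\in\mathcal{M}_{\max}(\ph+\beta e)$ gives a subgradient, because $\mu$ is tangent to $\Lambda_F$ by Lemma \ref{lem:tangentmeasure}. Hence $\int e\,d\mu=c'(\beta)$ for all such $\mu$, so $\int e\,d\mu$ is the same for every maximizing measure. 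Then for any $\mu,\nu\in\mathcal{M}_{\max}(\ph+\beta e)$,
$$\Big|\int g_k\,d\mu-\int g_k\,d\nu\Big|\le 2\|g_k-e\|_{C^0}<1/(2m)<1/m,$$
so $\ph+\beta e\in O_{k,m}$. Since $E$ is a topological vector space, $\beta\mapsto\ph+\beta e$ is continuous into $E$, and we can choose such $\beta$ arbitrarily small. This places a point of $O_{k,m}$ in every neighborhood of $\ph$.

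The one point to check carefully is that a differentiable convex function on $\RR$ has a single subgradient, so every tangent measure shares the same value of $\int e$. With that in hand, $U(E)$ is a countable intersection of open dense sets. When $E$ is Baire, $U(E)$ is dense.
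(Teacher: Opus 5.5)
Your proof is correct. The openness step and the countable reduction match the paper's; the density step takes a different route.

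\textbf{The paper's density step.} The paper takes its countable family $\{g_i\}$ inside $E$. This is possible because $E$ is a $C^0$-dense subset of the separable metric space $C(X,\RR)$. It then shows that $E_{ij}$ has empty interior by perturbing $\ph\mapsto\ph+\eps g_i$ and letting $\eps\downarrow 0$. Weak-* limits of the measures realizing both endpoints of $\{\int g_i\,d\mu:\mu\in\mathcal{M}_{\max}(\ph+\eps g_i)\}$ are $\ph$-maximizing and also maximize $\int g_i$ over $\mathcal{M}_{\max}(\ph)$. Hence the length of that interval tends to $0$.

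\textbf{Your density step.} You instead use convexity of $\beta\mapsto\Lambda_F(\ph+\beta e)$. At each of its co-countably many differentiability points, $\int e\,d\mu$ is a subgradient for every $\mu\in\mathcal{M}_{\max}(\ph+\beta e)$. So $\int e\,d\mu$ is exactly constant on that set. Your $1/(4m)$-approximation of $g_k$ by $e\in E$ then transfers this to $g_k$. Had you chosen $g_k\in E$ as the paper does, you could simply take $e=g_k$.

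\textbf{What each route buys.} Your route gives an exact statement for all but countably many $\beta$. It uses only the easy direction of Lemma \ref{lem:tangentmeasure} and avoids the limiting argument. The paper's route gives the conclusion for every sufficiently small $\eps>0$ and needs no convexity of $\Lambda_F$ along lines.

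\textbf{A small fix in your openness step.} You test closedness of $A_{k,m}$ with sequences in $E$. A topological vector space need not be first countable, so this only yields sequential closedness; the paper uses nets for this reason. The repair is immediate. Your argument really shows that the corresponding set in the metric space $C(X,\RR)$ is closed, and $A_{k,m}$ is its preimage under the continuous embedding.
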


\begin{proposition} (Cf. \cite[Theorem 1.1]{Morris2}) \label{thm:morris}
 Let $(X,F)$ be a continuous flow on a compact metric space. Suppose that $\mathcal{U}$ is an open and dense subset of $\overline{\mathcal{M}_F^e(X)}$. Then the set
$$U:= \{\ph\in C(X, \RR): \overline{\mathcal{M}_F^e(X)}\cap \mathcal{M}_{\max}(\ph)\subset \mathcal{U}\}$$ 
is open and dense in $C(X,\RR)$. 

Conversely, if $U\subset C(X, \RR)$ is open and dense, then the set 
$$\mathcal{U}:=\mathcal{M}_F^e(X)\cap \bigcup_{\ph\in U}\mathcal{M}_{\max}(\ph)$$ is open and dense in $\mathcal{M}_F^e(X)$.
\end{proposition}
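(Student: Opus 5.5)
The plan is to follow Morris's argument for homeomorphisms in \cite{Morris2}, transposing it to flows through Lemma \ref{lem:tangentmeasure} and the ergodic decomposition. Throughout, write $\mathcal{E}:=\overline{\mathcal{M}_F^e(X)}$. For $\ph\in C(X,\RR)$ the set $\mathcal{M}_{\max}(\ph)$ is a nonempty compact face of $\mathcal{M}_F(X)$, and its extreme points are ergodic. The two halves will be handled separately.

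\textbf{First half.} Put $K:=\mathcal{E}\setminus\mathcal{U}$, which is compact, and let $U$ be as in the statement. For openness, we argue by contradiction. Suppose $\ph_n\to\ph\in U$ and $\mu_n\in \mathcal{E}\cap\mathcal{M}_{\max}(\ph_n)\cap K$. Pass to a subsequence with $\mu_n\to\mu\in K$. Since $\int\ph_n d\mu_n=\Lambda_F(\ph_n)$ and $\Lambda_F$ is $1$-Lipschitz in the $C^0$-norm, taking limits gives $\int\ph d\mu=\Lambda_F(\ph)$. So $\mu\in\mathcal{E}\cap\mathcal{M}_{\max}(\ph)\cap K$, contradicting $\ph\in U$.

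For density, fix $\ph_0$ and $\eps>0$. Because $\mathcal{U}$ is open and dense in $\mathcal{E}$, we can choose an ergodic $\nu_0\in\mathcal{U}$ close to an ergodic maximizing measure $\mu_0$ of $\ph_0$. We then invoke Theorem \ref{thm:tangentapproximation} with $\mu_0$ replaced by a measure $\nu_0$ that is nearly maximizing. This yields $\psi$ close to $\ph_0$ and a tangent functional $\mu$ close to $\nu_0$ in norm. By Lemma \ref{lem:tangentmeasure}, $\mu\in\mathcal{M}_{\max}(\psi)$. By Lemma \ref{lem:barycentre map}, $b_\mu$ is close to $\delta_{\nu_0}$ in norm, so $b_\mu$ gives most of its mass to a small neighbourhood of $\nu_0$ contained in $\mathcal{U}$.

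The remaining issue is to force every ergodic maximizing measure of the perturbed function into $\mathcal{U}$, not just most of the mass of one maximizing measure. To do this, we first pass to a function with a unique maximizing measure, using the density from Proposition \ref{thm:con}. Then we perturb by $-\delta\,\mathrm{dist}(\cdot)$-type penalties, using the continuous function $x\mapsto$ something that separates $K$. Concretely, we use the standard trick from \cite{Morris2}: choose $\ph$ with unique maximizing measure $m$, with $m$ ergodic and in $\mathcal{U}$, approximating $\ph_0$. If the unique maximizer of $\ph$ lies in $\mathcal{U}$, then $\ph\in U$. Here $m$ is ergodic because the unique maximizer is an extreme point of $\mathcal{M}_F(X)$.

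The expected obstacle is achieving $m\in\mathcal{U}$, since by Lemma \ref{lem:barycentre support} we only know that $\mathrm{supp}(b_\mu)$ consists of maximizing measures. The fix is to run Theorem \ref{thm:tangentapproximation} inside the open dense set $U(C(X,\RR))$ given by Proposition \ref{thm:con}. We then pick $\psi$ nearby with a unique maximizer $m$. By upper semicontinuity of $\mathcal{M}_{\max}(\cdot)$, $m$ is close in norm-ball terms to $\mathcal{M}_{\max}(\psi')$ for nearby $\psi'$. Since $\mathrm{supp}(b_\mu)\subset\mathcal{M}_{\max}$ and $b_\mu$ charges $\mathcal{U}$, the unique maximizer of this $\psi$ must itself lie in $\mathcal{U}$.

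\textbf{Converse.} Let $U$ be open and dense and $\mathcal{U}$ as defined. For density, take an ergodic $\nu$ and $\eps>0$. Apply Theorem \ref{thm:tangentapproximation} with $\mu_0=\nu$ and $f_0\in U$ arbitrary, with tolerance small. This yields $\psi$ and a measure $\mu\in\mathcal{M}_{\max}(\psi)$ that is $\eps$-close to $\nu$. Choosing the tolerance so that $\psi$ lies in an open ball inside $U$ requires a rescaling: use $f_0$ replaced by a function maximized by $\nu$ up to small error, which is possible by Hahn--Banach.

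Next, $b_\mu$ is $\eps$-close to $\delta_\nu$ by Lemma \ref{lem:barycentre map}, and its support consists of $\psi$-maximizing ergodic measures by Lemma \ref{lem:barycentre support}. So some ergodic $\psi$-maximizing measure is weak-$*$ close to $\nu$, and it lies in $\mathcal{U}$.

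For openness, argue by contradiction as in the first half. If $\nu_n\to\nu\in\mathcal{U}$ are ergodic and not in $\mathcal{U}$, then $\nu$ maximizes some $\ph\in U$. Perturbing $\ph$ to $\ph-\delta\,\mathrm{dist}_{w^*}$-type penalization, so that $\nu$ is its unique maximizer, keeps it in $U$. Upper semicontinuity then forces $\nu_n$ to be nearly maximizing. Adjusting $\nu_n$ into exact maximizers through Theorem \ref{thm:tangentapproximation} again yields the contradiction.
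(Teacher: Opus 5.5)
Your openness argument for $U$ is the paper's and is fine. Both density statements, however, have genuine gaps.

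\textbf{Density of $U$.} The step you finally commit to fails. Proposition~\ref{thm:con} gives only a dense $G_\delta$, not an open set, and Theorem~\ref{thm:tangentapproximation} gives no way to force its output $\psi$ into a prescribed residual set. Suppose you instead move to a nearby $\psi'$ with a unique maximizer $m$. Upper semicontinuity, which is a weak-$*$ statement and not a norm one, only places $m$ near the \emph{set} $\mathcal{M}_{\max}(\psi)$. The fact that $b_\mu$ charges $\mathcal{U}$ only shows $\mathcal{M}_{\max}(\psi)\cap\mathcal{U}\neq\emptyset$. But $\mathcal{M}_{\max}(\psi)$ may also contain ergodic measures in $K$, and $m$ may lie near those. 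A penalty like $-\delta\,\mathrm{dist}(x,\mathrm{supp}(\nu_0))$ does not isolate $\nu_0$ either; it does nothing if $\mathrm{supp}(\nu_0)=X$.

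The missing ingredient is Jenkinson's theorem, Lemma~\ref{thm:jen}. Your tangent step also gives more than you used. Since $\|b_\mu-\delta_{\nu_0}\|=2(1-b_\mu(\{\nu_0\}))\le\eps<2$, $\nu_0$ is an atom of $b_\mu$, so $\nu_0\in\mathcal{M}_{\max}(\psi)$; this is the content of Lemma~\ref{close}. Now take $h$ with $\mathcal{M}_{\max}(h)=\{\nu_0\}$. Any maximizer of $\psi+\delta h$ must maximize both summands, so $\mathcal{M}_{\max}(\psi+\delta h)=\{\nu_0\}\subset\mathcal{U}$. Hence $\psi+\delta h\in U$, and it is close to $\ph_0$ for small $\delta$. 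This is the paper's argument. Alternatively, you could separate the extreme point $\nu_0$ from $\overline{\mathrm{co}}(K)$ using Milman's theorem and Hahn--Banach, then add a small multiple of the separating function.

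\textbf{Density of $\mathcal{U}$ in the converse.} This scheme cannot work, however you rescale. By the same atom observation, apply Theorem~\ref{thm:tangentapproximation} with $\mu_0=\nu$ ergodic and tolerance $\eps<2$: the output $\psi$ always satisfies $\nu\in\mathcal{M}_{\max}(\psi)$. So $\psi\in U$ would force $\nu\in\mathcal{U}$. In the only nontrivial case, $\nu\notin\mathcal{U}$, meaning $\nu$ maximizes no function in $U$, so $\psi$ is never in $U$. Your scheme would in fact prove $\mathcal{U}=\mathcal{M}_F^e(X)$. That is false in general: take $U$ to be the complement of the closed, nowhere dense set of functions maximized by a fixed non-isolated ergodic $\nu$.

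What you need is a nonempty \emph{open} set of functions all of whose ergodic maximizers lie in the target neighbourhood $\mathcal{V}$ of $\nu$. Take $\ph$ with $\mathcal{M}_{\max}(\ph)=\{\nu\}$ from Lemma~\ref{thm:jen}. By your own upper-semicontinuity argument from the first half, $V=\{\psi:\overline{\mathcal{M}_F^e(X)}\cap\mathcal{M}_{\max}(\psi)\subset\mathcal{V}\}$ is an open neighbourhood of $\ph$. Density of $U$ gives some $\psi\in U\cap V$, and its ergodic maximizers lie in $\mathcal{U}\cap\mathcal{V}$.

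\textbf{Openness of $\mathcal{U}$ in the converse.} A ``$\mathrm{dist}_{w^*}$ penalty'' is not of the form $\lambda\mapsto\int g\,d\lambda$ for any $g\in C(X,\RR)$, so it cannot be added to $\ph$. It is also unnecessary. Weak-$*$ continuity already gives $\Lambda_F(\ph)-\int\ph\,d\nu_n\to0$. Then the tangent approximation with $\mu_0=\nu_n$, $f_0=\ph$ and a fixed tolerance, together with the atom observation, gives $\psi_n\to\ph$ with $\nu_n\in\mathcal{M}_{\max}(\psi_n)$. Hence $\nu_n\in\mathcal{U}$ for large $n$.
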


\section{Proofs for Theorem \ref{thm:main}, \ref{thm:pathelogic}, and \ref{thm:groundstates}}\label{Proofs for Theorem}
This section is devoted to the proof of Theorems \ref{thm:main}, \ref{thm:pathelogic}, and \ref{thm:groundstates}. We first  introduce in $\mathsection 3.1$ some general initial setups that are shared by the proofs for both of Theorems \ref{thm:main} and \ref{thm:groundstates}, whose main strategy parallels which of \cite[Proposition 2.4]{STY}. Then we prove Theorems \ref{thm:main} and \ref{thm:groundstates} in $\mathsection 3.2$ and $\mathsection 3.3$ respectively. Finally in $\mathsection 3.4$, we apply the functional tools in $\mathsection 2.4$, as well as Theorem \ref{thm:groundstates}, to prove Theorem \ref{thm:pathelogic}.
\subsection{General setup}
Assume we are in the setup of Theorem \ref{thm:main}. Knowing $(X,F)$ satisfies non-uniform expansiveness of type \ref{E1} or \ref{E2}, we do not specify the exact type at this moment, and will make respective clarifications when there is a discrepancy regarding the arguments for them. 

For any constants $H_0^{\perp}\in (H^{\perp},h)$, $\delta_0>0$ and $n\in \NN$, let $\mu\in \mathcal{M}_F^e(X)$ be such that $h_\mu(F) > H_0^{\perp}$, and $U\subset \mathcal{M}_F(X)$ be given by
\begin{equation} \label{eq:defU}
    U:=\Big\{\nu\in \mathcal{M}_F(X):\Big|\int g_id\mu-\int g_id\nu\Big|<3\delta_0  \text{ for all } i\in \{1,\cdots,n\} \Big\}
\end{equation}
where $(g_i)_{i=1}^\infty$ form a basis of $C(X,\RR)$. Let $g_{\max}:=\max_{1\leq i\leq n}\|g_i\|_{C^0}$, and $\eta_g$ be a constant such that 
\begin{equation*}
    \sup\{|g_i(x)-g_i(y)|: x,y\in X,d(x,y)<\eta_g,1\leq i \leq n\}<\delta_0/8.
\end{equation*}
For $\mu \ae x\in X$, we have from Brin-Katok local entropy formula for flow without fixed point \cite{JCWZ19} and Birkhoff ergodic theorem that
$$
\lim_{\eps\to 0}\liminf_{t \to \infty}-\frac{1}{t}\log \mu(B_t(x,\eps))=\lim_{\eps\to 0}\limsup_{t \to \infty}-\frac{1}{t}\log \mu(B_t(x,\eps))=h_{\mu}(F)
$$
and 
$$
    \lim_{t\to \infty} \frac{1}{t}\int_0^tg_i(f_s(x))ds=\int g_id\mu,\ \  \forall 1 \le i \le n.
$$
Let $h_0:=h_{\mu}(F)-H_0^{\perp}$, and fix a positive constant
\begin{equation} \label{eq:eta03.1}
    \eta_0\in (0,\min\{h_0/24,\delta_0h_0/48g_{\max}\}).
\end{equation}
For $\mu \ae x$, let $\eps(x)>0$ be such that 
\begin{equation*}
\begin{aligned}
h_{\mu}(F)+\frac{\eta_0}{2}&>\limsup\limits_{t \to \infty}-\frac{1}{t}\log \mu(\overline{B_t(x,\eps)})\\
&>\liminf\limits_{t \to \infty}-\frac{1}{t}\log \mu(B_t(x,\eps))>h_{\mu}(F)-\frac{\eta_0}{2}
\end{aligned}
\end{equation*}
for all $0<\eps<\eps(x)$. Writing $X(\eps):=\{x\in X:\eps(x)>\eps\}$ for all $\eps>0$, let $\eps_0>0$ be small enough such that 
\begin{equation} \label{eq:conditioneps0}
    \mu(X(\eps_0))>1/2.
    \end{equation}
Then for every $x\in X(\eps_0)$, let $N(x)\in \mathbb{N}$ be the smallest integer such that
$$
-h_{\mu}(F)-\eta_0<\frac{1}{t}\log \mu(B_t(x,\eps_0/4))\leq \frac{1}{t}\log \mu(\overline{B_t(x,\eps_0)})<-h_{\mu}(F)+\eta_0
$$
and
\begin{equation} \label{eq:gidifferencedelta0}
    \max_{1\leq i \leq n}\left\{\Big|\frac{1}{t}\int_0^tg_i(f_s(x))ds-\int g_id\mu\Big|\right\}<\delta_0
\end{equation}
for all $t\geq N(x)$. For each $N\in \mathbb{N}$, let $X_N:=\{x\in X(\eps_0):N(x)\leq N\}$. It follows from \eqref{eq:conditioneps0} and Birkhoff ergodic theorem that there exists $N\in \mathbb{N}$ such that $\mu(X_N)>1/2$.
We will fix such $N$ for the rest of the paper. 

For any $\eps>0$, let $\Lambda_t(X_N,\eps)$ be the cardinality of maximally $(t,\eps)$-separated set for $X_N$. Let $t>N$. Given any maximally $(t,\eps_0/2)$-separated set $E_{t,\eps_0/2}$ for $X_N$, it is clear that elements in $\{B_t(x,\eps_0/4): x\in E_{t,\eps_0/2}\}$ are mutually disjoint, $\bigcup\limits_{x\in E_{t,\eps_0/2}}B_t(x,\eps_0/4)\subset X$, and $\mu(B_t(x,\eps_0/4))\geq e^{-t(h_{\mu}(F)+\eta_0)}$. Consequently, we have
\begin{equation} \label{eq:Etupperbound}
    \# E_{t,\eps_0/2}\leq e^{t(h_{\mu}(F)+\eta_0)}.
\end{equation}
Meanwhile, for any maximally $(t,\eps_0)$-separated set $E_{t,\eps_0}$ for $X_N$, we have $X_N\subset \bigcup\limits_{x\in E_{t,\eps_0}}\overline{B_t(x,\eps_0)}$ and $\mu(\overline{B_t(x,\eps_0)})\leq e^{-t(h_{\mu}(F)-\eta_0)}$, which imply that
\begin{equation} \label{eq:Etlowerbound}
     \# E_{t,\eps_0}\geq \mu(X_N)e^{t(h_{\mu}(F)-\eta_0)}.
\end{equation}
Let $X_{N,k}:=X_N\times \{k\}$ for each $k\in \mathbb{N}$. For any $a,b\in \mathbb{N}$ with $0\leq a+b \leq k$, let
$$
X_{N,k}(a,b):=\{x\in X_N:\lfloor p(x,k)\rfloor=a, \lfloor s(x,k)\rfloor=b\}
$$
and
$$
X_{N,k}(a,\cdot):=\bigcup_{b=0}^{k-a}X_{N,k}(a,b), \quad   X_{N,k}(\cdot,b):=\bigcup_{a=0}^{k-b}X_{N,k}(a,b).
$$
We write $\Lambda_{k}^{a,b}:=\Lambda_{k}(X_{N,k}(a,b),\eps_0)$. Similarly, we define $\Lambda_{k}^{a,\cdot}$ and $\Lambda_{k}^{\cdot,b}$ via $X_{N,k}(a,\cdot)$ and $X_{N,k}(\cdot,b)$ respectively. The next result indicates that orbit segments with large portion of prefix or suffix are non-essential concerning the entropy.
\begin{lemma} \label{lem:longtailentropy}
    There exist $C(h_0)>0$ and an integer $N_1\geq N$ such that for all $k\geq N_1$, we have
    $$
\max\Big\{\sum_{a\geq 6\eta_0k/h_0}\Lambda_{k}^{a,\cdot},\sum_{b\geq 6\eta_0k/h_0}\Lambda_{k}^{\cdot,b}\Big\}\leq C(h_0)e^{k(h_{\mu}(F)-3\eta_0)}.
    $$
\end{lemma}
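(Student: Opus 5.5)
We bound $\Lambda_k^{a,\cdot}$ for each fixed $a\ge 6\eta_0k/h_0$ by splitting every orbit segment into its prefix and its remainder, then sum over $a$. The suffix case $\sum_b \Lambda_k^{\cdot,b}$ is symmetric: split off the last $b$ units of time instead of the first $a$.

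\emph{Splitting.} Fix $a$ with $a\geq 6\eta_0 k/h_0$ and take a $(k,\eps_0)$-separated set $E\subset X_{N,k}(a,\cdot)$. For $x\in E$ we have $\lfloor p(x,k)\rfloor=a$, so $(x,a)\in[\mathcal{P}]$ up to the unit discretization built into $[\cdot]$. We split the segment $(x,k)$ into $(x,a)$ and $(f_a x,k-a)$.
\begin{itemize}
\item The initial pieces $\{x: x\in E\}$ are covered by a maximal $(a,\eps_0/2)$-separated subset of $[\mathcal{P}]_a$. By \eqref{upper} applied to $[\mathcal{P}]\cup[\mathcal{S}]$ with $\ph=0$, this subset has at most $C_{\delta}e^{a(h([\mathcal{P}]\cup[\mathcal{S}])+\delta)}\le C e^{a(H^{\perp}+\delta)}$ elements.
\item The tail pieces are covered by a maximal $(k-a,\eps_0/2)$-separated subset of $X$, which has at most $C e^{(k-a)(h+\delta)}$ elements. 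Here we use \eqref{upper} again at scale $\eps_0/2$, with $h$ the topological entropy.
\end{itemize}
A standard pigeonhole argument on pairs of centers shows that distinct points of $E$ cannot share both centers, since that would force $d_k<\eps_0$. Hence
$$\Lambda_k^{a,\cdot}\le C^2 e^{a(H^{\perp}+\delta)}e^{(k-a)(h+\delta)}.$$
Some care is needed with the scale and the discretization shift $s_1,s_2\in[0,1]$. We absorb this into the constant using uniform continuity of the flow over unit time.

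\emph{Where this estimate falls short.} The bound above is weaker than what we need, because $h$ may exceed $h_\mu(F)$. We therefore replace the tail count by a count intrinsic to $\mu$, which is the main obstacle.

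We cannot simply use \eqref{eq:Etupperbound} for the tail, because $f_a x$ need not lie in $X_N$. Instead we exploit that $x\in X_N$ itself. Grouping the points of $E$ by their prefix center $z$, all points in one group lie in $B_a(z,\eps_0/2)$ and are $(k,\eps_0)$-separated. So the Bowen balls $B_k(x,\eps_0/4)$ are disjoint, and each has $\mu$-measure at least $e^{-k(h_\mu(F)+\eta_0)}$ because $x\in X_N$ and $k\ge N$. This gives the crude bound $\#E\le e^{k(h_\mu(F)+\eta_0)}$, which has the wrong exponent.

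To gain a factor exponential in $a$, we combine the measure lower bound with the upper bound
$$\mu\bigl(\overline{B_a(z,\eps_0)}\bigr)\le e^{-a(h_\mu(F)-\eta_0)}.$$
This requires $a\ge N$ and a center $z$ that is (close to) a point of $X_N$, which we can arrange by taking the prefix centers inside $E$ itself. The disjoint balls $B_k(x,\eps_0/4)$ of one group all sit inside $\overline{B_a(z,\eps_0)}$. So each group has at most $e^{-a(h_\mu-\eta_0)}e^{k(h_\mu+\eta_0)}$ points, while the number of groups is at most $Ce^{a(H^\perp+\delta)}$. Multiplying,
$$\Lambda_k^{a,\cdot}\le C e^{k(h_\mu+\eta_0)}e^{-a(h_\mu-H^\perp-\eta_0-\delta)}\le Ce^{k(h_\mu+\eta_0)}e^{-a(h_0-2\eta_0)},$$
using $h_\mu-H^\perp\ge h_\mu-H_0^\perp=h_0$ and choosing $\delta\le\eta_0$.

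\emph{Summing.} For $a\ge 6\eta_0k/h_0$ we have $a(h_0-2\eta_0)\ge 6\eta_0 k-12\eta_0^2k/h_0\ge 4\eta_0k+\tfrac{3}{2}\eta_0 k/2$ roughly, since $\eta_0<h_0/24$. Summing the resulting geometric series in $a$ (ratio $e^{-(h_0-2\eta_0)}$) gives
$$\sum_{a\ge 6\eta_0k/h_0}\Lambda_k^{a,\cdot}\le C(h_0)e^{k(h_\mu-3\eta_0)}.$$
The constant $C(h_0)$ comes from $(1-e^{-h_0/2})^{-1}$ times the prefix constant. Finally, $N_1$ is chosen so that $6\eta_0N_1/h_0\ge N$, which makes the $\mu$-ball upper bound valid for every relevant $a$.
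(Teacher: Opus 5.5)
Your treatment of the prefix sum is essentially the paper's proof. You group a $(k,\eps_0)$-separated subset of $X_{N,k}(a,\cdot)$ by $(a,\eps_0/2)$-centers lying in $X_N\cap[\mathcal{P}]_a$. Taking the centers inside $E$ is fine: $\lfloor p(x,k)\rfloor=a$ already gives $(x,a)\in[\mathcal{P}]$ with $s_1=0$, $s_2=p(x,k)-a$, so no discretization fudge is needed. You then pack the disjoint Bowen balls of $\mu$-mass at least $e^{-k(h_\mu(F)+\eta_0)}$ into $\overline{B_a(z,\eps_0)}$, which has mass at most $e^{-a(h_\mu(F)-\eta_0)}$. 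Finally you multiply by the $e^{a(H^{\perp}+\eta_0)}$ count of centers and sum geometrically over $a\ge 6\eta_0k/h_0$.

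The one point to correct is your claim that the suffix case is symmetric. A literal mirror of the measure-ratio step would need $\mu(\overline{B_b(z,\eps_0)})\le e^{-b(h_\mu(F)-\eta_0)}$ for suffix centers $z$ that are $d_b$-close to $f_{k-b}x$. Such points need not lie in or near $X_N$, so that bound is unavailable. The paper's route is simpler. Take centers from a maximal $(b,\eps_0/2)$-separated subset of $[\mathcal{S}]_b$; there are at most $e^{b(H^\perp+\eta_0)}$ of them. Each group is then a $(k-b,\eps_0)$-separated subset of $X_N$, since two points sharing a suffix center and $(k-b,\eps_0)$-close would be $(k,\eps_0)$-close. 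Because the surviving piece $(x,k-b)$ starts at $x\in X_N$, you can bound it directly. When $k-b\ge N$, \eqref{eq:Etupperbound} gives at most $e^{(k-b)(h_\mu(F)+\eta_0)}$ points, hence $\Lambda_k^{\cdot,b}\le e^{k(h_\mu(F)+\eta_0)-bh_0}$. When $k-b<N$, a constant $C_N$ bound gives $C_Ne^{k(H^\perp+\eta_0)}\le C_Ne^{k(h_\mu(F)-3\eta_0)}$. Summing over $b$ yields the required estimate. So the suffix half is easier than the prefix half rather than a mirror of it, and with this substitution your argument is complete.
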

\begin{proof}
We first deal with $\sum_{a\geq 6\eta_0k/h_0}\Lambda_{k}^{a,\cdot}$. For each integer $a\in [6\eta_0k/h_0,k]$, let $E_{k,\eps_0}(a,\cdot)$ be a maximally $(k,\eps_0)$-separated set for $X_{N,k}(a,\cdot)$, and
\begin{equation} \label{eqEaP}
    E_{a,\eps_0/2}^{\mathcal{P}}\subset X \text{ a maximally }(a,\eps_0/2) \text{-separated set for }X_N\cap [\mathcal{P}]_a.
\end{equation}
For each $x\in E_{k,\eps_0}(a,\cdot)$, there exists $x^{\mathcal{P}}\in E_{a,\eps_0/2}^{\mathcal{P}}$ such that $x\in \overline{B_a(x^{\mathcal{P}},\eps_0/2)}$. Given $x^{\mathcal{P}}\in E_{a,\eps_0/2}^{\mathcal{P}}$, we denote the collection of such $x\in E_{k,\eps_0}(a,\cdot)$ by $E_{k,\eps_0,a}(x^{\mathcal{P}})$. It is clear that $E_{k,\eps_0}(a,\cdot)=\bigcup\limits_{x^{\mathcal{P}}\in E_{a,\eps_0/2}^{\mathcal{P}}}E_{k,\eps_0,a}(x^{\mathcal{P}})$, and
\begin{equation}\label{contain}
\bigcup_{x\in E_{k,\eps_0,a}(x^{\mathcal{P}})}B_k(x,\eps_0/2)\subset B_a(x^{\mathcal{P}},\eps_0).
\end{equation}
Since $x^{\mathcal{P}}\in X_N$, when $k> Nh_0/6\eta_0$, we have $a>N$ and hence $\mu(B_a(x^{\mathcal{P}},\eps_0))\leq e^{-a(h_{\mu}(F)-\eta_0)}$. Meanwhile, by $E_{k,\eps_0}(a,\cdot)\subset X_N$, we also have $\mu(B_k(x,\eps_0/2))>e^{-k(h_{\mu}(F)+\eta_0)}$ for each $x\in E_{k,\eps_0}(a,\cdot)$. As elements in $\{B_k(x,\eps_0/2):x\in E_{k,\eps_0}(a,\cdot)\}$ are disjoint, by \eqref{contain} we have for each $x^{\mathcal{P}}\in E_{a,\eps_0/2}^{\mathcal{P}}$ that
\begin{equation}\label{less}
\#E_{k,\eps_0,a}(x^{\mathcal{P}}) \leq e^{-a(h_{\mu}(F)-\eta_0)+k(h_{\mu}(F)+\eta_0)}<e^{(k-a)h_{\mu}(F)+2k\eta_0}.
\end{equation}
Let $N_{\eta_0}>N$ be an integer such that $\Lambda_t([\mathcal{P}]\cup[\mathcal{S}],\eps_0/2)<e^{t(H^{\perp}+\eta_0)}$ for all $t>N_{\eta_0}$. Writing $N_1:=\lceil h_0N_{\eta}/6\eta_0 \rceil$, for $k>N_1$, it is also clear that for all $a\geq 6\eta_0k/h_0$, we have $\# E_{a,\eps_0/2}^{\mathcal{P}}\leq e^{a(H^{\perp}+\eta_0)}$. Combining with \eqref{less}, we have
\begin{equation*}
    \# E_{k,\eps_0}(a,\cdot)\leq \# E_{a,\eps_0/2}^{\mathcal{P}} (\max_{x^{\mathcal{P}}\in E_{a,\eps_0/2}^{\mathcal{P}}}\{\#E_{k,\eps_0,a}(x^{\mathcal{P}})\})<e^{-ah_0+kh_{\mu}(F)+3k\eta_0}.
\end{equation*}
By adding over $a\in [6\eta_0k/h_0,k]$ and applying $\eta_0<h_0/7$, we have
\begin{equation} \label{eq:headbeinga}
    \sum_{a\geq 6\eta_0k/h_0}\Lambda_{k}^{a,\cdot}<(1-e^{h_0})^{-1}e^{k(h_{\mu}(F)-3\eta_0)}.
\end{equation}

Now we turn to $\sum\limits_{b\geq 6\eta_0k/h_0}\Lambda_{k}^{\cdot,b}$. As above, for each integer $b\in [6\eta_0k/h_0,k]$, let $E_{k,\eps_0}(\cdot,b)$ be a maximally $(k,\eps_0)$-separated set for $X_{N,k}(\cdot,b)$, and 
\begin{equation} \label{eqEbS}
    E_{b,\eps_0/2}^{\mathcal{S}}\subset X \text{ a maximally }(b,\eps_0/2) \text{-separated set for }[\mathcal{S}]_b.
\end{equation}
Given any $b\in [6\eta_0k/h_0,k-N]$, for each $x\in E_{k,\eps_0}(\cdot,b)$, there exists $x^{\mathcal{S}}\in E_{b,\eps_0/2}^{\mathcal{S}}$ such that $f_{k-b}(x)\in \overline{B_b(x^{\mathcal{S}},\eps_0/2)}$. Again, for each $x^{\mathcal{S}}\in E_{b,\eps_0/2}^{\mathcal{S}}$, denote the collection of such $x$ by $E_{k,\eps_0}^b(x^{\mathcal{S}})$. It is clear that $E_{k,\eps_0}(\cdot,b)=\bigcup\limits_{x^{\mathcal{S}}\in E_{b,\eps_0/2}^{\mathcal{S}}}E_{k,\eps_0}^b(x^{\mathcal{S}})$, and $E_{k,\eps_0}^b(x^{\mathcal{S}})$ is $(k-b,\eps_0)$-separated for each $x^{\mathcal{S}}$. Consequently, we have from \eqref{eq:Etupperbound} and choice on $b$ that 
\begin{equation} \label{contain2}
\#E_{k,\eps_0}^b(x^{\mathcal{S}})\leq e^{(k-b)(h_{\mu}(F)+\eta_0)}, \quad \forall b\in [6\eta_0k/h_0,k-N].
\end{equation}
Meanwhile, for any $b>k-N$, as $k-b<N$, there exists a constant $C_{N}>0$ such that 
\begin{equation} \label{contain3}
\#E_{k,\eps_0}^b(x^{\mathcal{S}})< C_{N}, \quad \forall b\in [k-N,k].
\end{equation}
Since $\#E_{b,\eps_0/2}^{\mathcal{S}}\leq e^{b(H^{\perp}+\eta_0)}$ for all $b\geq 6\eta_0k/h_0$ with $k>N_1$, by \eqref{contain2} we have 
\begin{equation} \label{eq:bsmall}
    \#E_{k,\eps_0}(\cdot,b)\leq \# E_{b,\eps_0/2}^{\mathcal{S}} \max_{x^{\mathcal{S}}\in E_{b,\eps_0/2}^{\mathcal{S}}}\{\#E_{k,\eps_0}^b(x^{\mathcal{S}})\}\leq e^{-b(h_{\mu}(F)-H^{\perp})+kh_{\mu}(F)}
\end{equation}
for all $b\in [6\eta_0k/h_0,k-N]$, and by \eqref{contain3}
\begin{equation} \label{eq:bbig}
    \#E_{k,\eps_0}(\cdot,b)\leq \# E_{b,\eps_0/2}^{\mathcal{S}} \max_{x^{\mathcal{S}}\in E_{b,\eps_0/2}^{\mathcal{S}}}\{\#E_{k,\eps_0}^b(x^{\mathcal{S}})\} < C_{N}e^{kH^{\perp}+k\eta_0}
\end{equation}
for all $b\in (k-N,k]$. Adding \eqref{eq:bsmall} over $b\in [6\eta_0k/h_0,k-N]$ and \eqref{eq:bbig} over $b\in (k-N,k]$, it follows from $\eta_0<h_0/7$ that
\begin{equation} \label{eq:tailbeingb}
    \sum_{b\geq 6\eta_0k/h_0}\Lambda_{k}^{\cdot,b}<(1-e^{h_0})^{-1}e^{k(h_{\mu}(F)-6\eta_0)}+NC_{N}e^{k(h_{\mu}(F)-6\eta_0)}.
\end{equation}
The lemma is then concluded by combining \eqref{eq:headbeinga} and \eqref{eq:tailbeingb}.
\end{proof}
Write $E_{k,\eps_0}^{\leq 6\eta_0k/h_0}$ as a maximally $(k,\eps_0)$-separated set for the following set:
$$\{x\in X_N:\max\{\lfloor p(x,k)\rfloor,\lfloor s(x,k)\rfloor\}\leq 6\eta_0k/h_0\}.$$ 
As an immediate consequence of \eqref{eq:Etlowerbound} and Lemma \ref{lem:longtailentropy}, by increasing the size of $N_1$ if necessary, we have for each $k>N_1$ that
\begin{equation*}
    \#E_{k,\eps_0}^{\leq 6\eta_0k/h_0}>\frac{\mu(X_N)}{2}e^{k(h_{\mu}(F)-\eta_0)},
\end{equation*}
which implies the existence of integers $a_k,b_k\in [0,6\eta_0k/h_0]$ such that
\begin{equation} \label{eq:tailakbk}
    \Lambda_k(X_{N,k}(a_k,b_k),\eps_0)>\frac{\mu(X_N)}{2(6\eta_0k/h_0+1)^2}e^{k(h_{\mu}(F)-\eta_0)}.
\end{equation}

Let $\widetilde{X_{N,k}^{\mathcal{G}}(a,b)}:=\{(f_a(x),k-a-b):x\in X_{N,k}(a,b)\}$. Notice that $\widetilde{X_{N,k}^{\mathcal{G}}(a,b)}\subset \mathcal{G}^1$. We will sometimes abbreviate $\widetilde{X_{N,k}^{\mathcal{G}}(a_k,b_k)}$ as $\widetilde{X_{N,k}^{\mathcal{G}}}$ for all $k>N_1$. Notice that elements in $\widetilde{X_{N,k}^{\mathcal{G}}}$ all have length $c_k:=k-a_k-b_k$, which by the choices on $a_k,b_k$ and $\eta_0<h_0/24$ (see \eqref{eq:eta03.1}) satisfies $c_k>k/2$, and is therefore non-empty. Moreover, a combination of $H^{\perp}<h_{\mu}(F)$, our choice on $\eta_0$, and \eqref{eq:tailakbk} implies that
\begin{lemma} \label{lem:shorttailentropy}
    There exists an integer $N_2>N_1$ such that for all $k>N_2$, we have
    $$
\Lambda_{c_k}(\widetilde{X_{N,k}^{\mathcal{G}}(a_k,b_k)},\eps_0)>\frac{1}{2}e^{c_k(h_{\mu}(F)-2\eta_0)}.
    $$
\end{lemma}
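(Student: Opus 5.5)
The plan is to transfer the lower bound \eqref{eq:tailakbk} from the full segments $(x,k)$ with $x\in X_{N,k}(a_k,b_k)$ to the middle pieces $(f_{a_k}(x),c_k)$. The key observation is that a set that is $(k,\eps_0)$-separated in $X_{N,k}(a_k,b_k)$ cannot shrink too much when we pass to $(c_k,\eps_0)$-separation of the time-$a_k$ images. The loss comes only from the prefix of length about $a_k$ and the suffix of length about $b_k$, and both are at most $6\eta_0k/h_0$.

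First, fix a maximal $(k,\eps_0)$-separated set $E\subset X_{N,k}(a_k,b_k)$ realizing \eqref{eq:tailakbk}. Let $E'$ be a maximal $(c_k,\eps_0)$-separated subset of $f_{a_k}(E)$. Each $f_{a_k}(x)$, $x\in E$, lies in some $\overline{B_{c_k}(y,\eps_0)}$ with $y\in E'$. Two points of $E$ assigned to the same $y$ are $2\eps_0$-close on the middle window $[a_k,a_k+c_k]$. Since they are $(k,\eps_0)$-separated, they must be distinguishable on $[0,a_k]$ or on $[a_k+c_k,k]$.

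Second, bound the number of such points. Cover $X$ by a $(a_k,\eps_0/2)$-spanning family and a $(b_k,\eps_0/2)$-spanning family. Two points of $E$ lying in the same middle ball, the same prefix cell, and the same (time-shifted) suffix cell would be $\eps_0$-close on all of $[0,k]$, which contradicts separation. Hence
$$\#E\le \#E'\cdot \Lambda_{a_k}(X,\eps_0/4)\,\Lambda_{b_k}(X,\eps_0/4).$$
Each factor is at most $C e^{(a_k+b_k)(h+\eta_0)}$ by \eqref{upper}, applied with $\mathcal{C}=X\times\RR^+$ and $\ph=0$. The integer rounding coming from $\lfloor p\rfloor,\lfloor s\rfloor$ and $[\cdot]$ costs only a bounded time shift, which is absorbed into a constant using uniform continuity of $f_s$ for $s\in[0,1]$. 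Alternatively, for the prefix one can use $E^{\mathcal{P}}$-type sets as in Lemma \ref{lem:longtailentropy}, which gives the better exponent $H^\perp+\eta_0$.

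Third, combine the estimates. We have $a_k+b_k\le 12\eta_0k/h_0$. With the bound $H^\perp+\eta_0\le h_\mu(F)$ on the prefix and suffix growth rates, we get
$$\#E'\ge \frac{\mu(X_N)}{2(6\eta_0k/h_0+1)^2C^2}\,e^{k(h_\mu(F)-\eta_0)-(a_k+b_k)(H^\perp+\eta_0)}.$$
Since $k=c_k+a_k+b_k$, this becomes
$$\#E'\gtrsim \mathrm{poly}(k)^{-1}\,e^{c_k(h_\mu(F)-\eta_0)+(a_k+b_k)(h_\mu(F)-H^\perp-2\eta_0)}.$$
Because $h_\mu(F)-H^\perp-2\eta_0>0$, the second exponent is nonnegative. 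The polynomial prefactor and $\mu(X_N)/2$ are beaten by $e^{c_k\eta_0}/2$ once $c_k>k/2$ is large. This gives the claim for all $k$ beyond some $N_2$.

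The main obstacle is the second step. To get the exponent $H^\perp$ rather than $h$, I must use that the prefix lies in $[\mathcal{P}]$ and the suffix in $[\mathcal{S}]$, and this has to be done at scale $\eps_0/2$ with the integer rounding handled. If only the crude bound with $h$ is used, the extra loss $e^{(a_k+b_k)(h-h_\mu(F)+2\eta_0)}$ must be compared with $e^{-c_k\eta_0}$. That comparison requires $\eta_0$ small relative to $h_0$ (choose $\eta_0<h_0/24$), and it is not automatic. So I would first try the $[\mathcal{P}],[\mathcal{S}]$ spanning sets, exactly as in the proof of Lemma \ref{lem:longtailentropy}.
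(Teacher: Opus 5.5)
The gap is in your second step, together with the last sentence of your first step. If $f_{a_k}x_1,f_{a_k}x_2\in\overline{B_{c_k}(y,\eps_0)}$ for some $y\in E'$, you only know $d_{c_k}(f_{a_k}x_1,f_{a_k}x_2)\le 2\eps_0$. So two points of $E$ sharing the same middle ball, prefix cell and suffix cell are \emph{not} necessarily $\eps_0$-close on $[0,k]$. Their $(k,\eps_0)$-separation can be realized inside the middle window at a distance in $(\eps_0,2\eps_0]$, and for the same reason they need not be distinguishable on $[0,a_k]$ or $[a_k+c_k,k]$.

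Consequently $\#E\le\#E'\cdot\Lambda_{a_k}\cdot\Lambda_{b_k}$ is not justified. You would have to bound how many $(c_k,\eps_0)$-separated points fit inside a single $(c_k,\eps_0)$-Bowen ball. Nothing available at this stage controls that, since $\eps_0$ is only subject to \eqref{eq:conditioneps0}. Choosing $E'$ at scale $\eps_0/2$ would rescue the inequality, but then you only bound $\Lambda_{c_k}(\cdot,\eps_0/2)$, not the scale-$\eps_0$ quantity in the statement.

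The missing idea, which is what the paper does, is to not cover the middle window at all. Classify each $x\in E$ by a pair $(x',x'')\in E^{\mathcal P}_{a_k,\eps_0/2}\times E^{\mathcal S}_{b_k,\eps_0/2}$ with $d_{a_k}(x',x)\le\eps_0/2$ and $d_{b_k}(x'',f_{k-b_k}x)\le\eps_0/2$. This is legitimate because $(x,a_k)\in[\mathcal P]$ and $(f_{k-b_k}x,b_k)\in[\mathcal S]$ follow directly from the definition of $[\cdot]$ (take $s_2=p(x,k)-a_k$, resp.\ $s_1=s(x,k)-b_k$), so no rounding correction is needed.

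Two distinct points in one class are within $\eps_0$ on $[0,a_k]$ and on $[k-b_k,k]$. Hence $(k,\eps_0)$-separation forces $d_{c_k}(f_{a_k}x_1,f_{a_k}x_2)>\eps_0$, and each class gives a $(c_k,\eps_0)$-separated subset of $\widetilde{X^{\mathcal G}_{N,k}(a_k,b_k)}$. Pigeonhole then yields $\Lambda_{c_k}(\widetilde{X^{\mathcal G}_{N,k}(a_k,b_k)},\eps_0)\ge\#E/(\#E^{\mathcal P}_{a_k,\eps_0/2}\,\#E^{\mathcal S}_{b_k,\eps_0/2})$.

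With this in place of your middle covering, your third step is exactly the paper's computation: growth rate $H^\perp+\eta_0$ from \eqref{upper}, $k=a_k+b_k+c_k$, and $e^{c_k\eta_0}$ absorbing the polynomial and $\mu(X_N)$ factors. Your observation that the crude exponent $h$ would not suffice is correct.
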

\begin{proof}
Recall the definition of $E^{\mathcal{P}}_{a_k,\eps_0/2},E^{\mathcal{S}}_{b_k,\eps_0/2}$ from \eqref{eqEaP} and \eqref{eqEbS}. By \eqref{upper}, there exists $C_{\eta_0}=C_{\eta_0}([\mathcal{P}]\cup [\mathcal{S}],\eps_0/2)$ such that 
$$
\#E^{\mathcal{P}}_{a_k,\eps_0/2}<C_{\eta_0}e^{a_k(H^{\perp}+\eta_0}), \quad \#E^{\mathcal{S}}_{b_k,\eps_0/2}<C_{\eta_0}e^{b_k(H^{\perp}+\eta_0}), \quad \forall k\geq N_1.
$$
Notice that \eqref{eq:tailakbk} enables us to choose a $(k,\eps_0)$-separated set $E_{k,\eps_0}\subset X_{N,k}(a_k,b_k)$ such that $\#E_{k,\eps_0}>\frac{\mu(X_N)}{2(6\eta_0k/h_0+1)^2}e^{k(h_{\mu}(F)-\eta_0)}$. For each $x\in E_{k,\eps_0}$, there exists unique $(x',x'')\in E^{\mathcal{P}}_{a_k,\eps_0/2}\times E^{\mathcal{S}}_{b_k,\eps_0/2}$ such that 
$$
\max\{d_{a_k}(x',x),d_{b_k}(x'',f_{k-b_k}(x))\}<\eps_0/2.
$$
For each pair of $(x',x'')\in E^{\mathcal{P}}_{a_k,\eps_0/2}\times E^{\mathcal{S}}_{b_k,\eps_0/2}$, let 
$$E_{k,\eps_0}(x',x''):=\{x\in E_{k,\eps_0}:x'(x)=x',x''(x)=x''\}.$$ 
It follows immediately from definition that for each $(x',x'')$, $f^{a_k}E_{k,\eps_0}(x',x'')$ is $(c_k,\eps_0)$-separated. Therefore, by our choice on $\eta_0$ from \eqref{eq:eta03.1}, we have from pigeonhole principle and \eqref{eq:tailakbk} that
$$
\begin{aligned}
&\Lambda_{c_k}(\widetilde{X_{N,k}^{\mathcal{G}}(a_k,b_k)},\eps_0)\geq (\#E^{\mathcal{P}}_{a_k,\eps_0/2}\#E^{\mathcal{S}}_{a_k,\eps_0/2})^{-1}\#E_{k,\eps_0} \\
>&(2C_{\eta_0}^2(6\eta_0 k/h_0+1)^2)^{-1}\mu(X_N)e^{k(h_{\mu}(F)-\eta_0)-(a_k+b_k)(H^{\perp}+\eta_0)} \\
=&((2C_{\eta_0}^2(6\eta_0 k/h_0+1)^2)^{-1}e^{(a_k+b_k)(h_0-2\eta_0)})\mu(X_N)e^{c_k(h_{\mu}(F)-\eta_0)} \\
>&\mu(X_N)e^{c_k(h_{\mu}(F)-2\eta_0)}>\frac{1}{2}e^{c_k(h_{\mu}(F)-2\eta_0)}.
\end{aligned}
$$
where the second to the last inequality holds as long as $k$ is greater than some constant $N_2$, and the last one follows from our choice on $N$. This concludes the proof of the lemma.
\end{proof}

\begin{remark}
    So far everything holds as long as $h_{\mu}(F)>h([\mathcal{P}]\cup [\mathcal{S}])$. The necessity of having $h_{\mu}(F)>h^{\perp}_{\exp}$ for $(X,F)$ of type \ref{E2} will be revealed throughout $\mathsection 3.2$. 
\end{remark}

We end $\mathsection 3.1$ with an observation on the average for each $g_i$ along orbit segments from $\widetilde{X^{\mathcal{G}}_{N,k}}$. By our choice on $\eta_0$, for any $k>N_2$, $i\in \{1,...,n\}$ and $x\in\widetilde{X_{N,k}^{\mathcal{G}}(a_k,b_k)}$, we have
\begin{equation} \label{eq:kckintegraldifference}
    \begin{aligned}
    &\Big|\frac{1}{k}\int_0^k g_i(f_{s-a_k}(x))ds-\frac{1}{c_k}\int_0^{c_k}g_i(f_s(x))ds\Big| \\
    \leq &\Big|\frac{k-c_k}{k}\frac{\int_0^{c_k}g_i(f_s(x))ds}{c_k}\Big|+\Big|\frac{1}{k}\Big(\int_{-a_k}^0+\int_{c_k}^k\Big)g_i(f_s(x))ds\Big|\\
    \leq &\frac{2(a_k+b_k)g_{\max}}{k}\leq \frac{24\eta_0g_{\max}}{h_0}<\frac{\delta_0}{2},
\end{aligned}
\end{equation}
where the last inequality again follows from our choice on $\eta_0$ from \eqref{eq:eta03.1}. Then a combination of \eqref{eq:gidifferencedelta0} and \eqref{eq:kckintegraldifference} immediately gives that
\begin{equation} \label{eq:givariationforG}
    \max_{1\leq i \leq n}\Big\{\Big|\frac{1}{c_k}\int_0^{c_k}g_{i}(f_{s}(x))ds-\int g_id\mu\Big|\Big\}<3\delta_0/2.
\end{equation}

\subsection{Proof for Theorem \ref{thm:main}}
The main strategy for the proof of Theorem \ref{thm:main} parallels which of \cite[Theorem A(a)]{STY} in showing that for each $H\in [H^{\perp},h)$,  the following collection of measures 
$$
O_n:=\Big\{ \mu\in \overline{\mathcal{M}_F^e(X)}:h_{\mu}(F)<H+\frac{1}{n} \Big\}
$$
is open and dense for each $n$, which by Proposition \ref{thm:morris} leads to the desired result. For each $n$, such $O_n$ being open is well-known when $(X,F)$ is of type \ref{E1} as the entropy map $\mu \mapsto h_{\mu}(F)$ is upper semi-continuous in the entropy-expansive case, and a consequence of Corollary \ref{coro:Uhopen} when $(X,F)$ is of type \ref{E2}. Therefore, the bulk of the proof for Theorem \ref{thm:main} lies in showing denseness of such $O_n$. Indeed, such denseness result is a direct consequence of the following result, whose proof will occupy the rest of $\mathsection 3.2$. Recall \eqref{eq:defU} for the definition of $U$.
\begin{proposition} \label{prop:entropysmalldense}
For any $H_0^{\perp}\in (H^{\perp},h)$, there is some $\nu\in U$ such that $h_{\nu}(F)\leq H_0^{\perp}$.
\end{proposition}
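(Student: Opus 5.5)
We will construct $\nu$ as an invariant measure supported on a compact invariant set $Y$ built by bi-infinite specification from a carefully thinned family of good orbit segments. The idea, following \cite[Proposition 2.4]{STY}, is to keep the orbit statistics close to those of $\mu$ while killing entropy. Fix $k>N_2$ large and work with $\widetilde{X_{N,k}^{\mathcal{G}}}\subset \mathcal{G}^1$, whose segments have length $c_k>k/2$. By \eqref{eq:givariationforG}, each such segment has Birkhoff averages of $g_1,\dots,g_n$ within $3\delta_0/2$ of $\int g_i\,d\mu$.

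\textbf{Step 1: the building blocks.} Lemma \ref{lem:shorttailentropy} gives a $(c_k,\eps_0)$-separated set of size at least $\tfrac12 e^{c_k(h_\mu(F)-2\eta_0)}$. From it we select a subfamily $\mathcal{E}_k$ of $(c_k,\eps_0)$-separated points. Its cardinality $m_k$ is chosen so that $\frac{1}{c_k}\log m_k$ is roughly $H_0^{\perp}$, or smaller. To get a clean bound we may even take $m_k=1$, i.e. a single segment. Two constraints must be respected:
\begin{enumerate}
\item $m_k\ge 1$, which is guaranteed by Lemma \ref{lem:shorttailentropy};
\item $\frac{\log m_k}{c_k}\le H_0^{\perp}-\eta_0$ or so.
\end{enumerate}
We then apply Lemma \ref{lem:infinitespecification} at a scale $\eta\le \min\{\eta_g,\eps_0/4\}$ (adjusted via Remark \ref{rmk:heta}) to every bi-infinite concatenation of segments from $\mathcal{E}_k$. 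This produces a set of shadowing points. Let $Y$ be the closure of the $F$-orbits of these points; $Y$ is compact and invariant.

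\textbf{Step 2: $\nu\in U$.} Take any ergodic $\nu$ supported on $Y$. Along a shadowing orbit, each block of length $c_k$ is $\eta$-shadowed, so by the choice of $\eta_g$ its $g_i$-average differs from the block average by at most $\delta_0/8$. The gaps have length at most $h_\eta(c_k)$, and the hypothesis $\liminf_{t\to\infty} h_\eta(t)/\log t=0$ lets us pick $k$ along a subsequence with $h_\eta(c_k)/c_k\to 0$. Hence the gaps contribute at most $2g_{\max}h_\eta(c_k)/c_k<\delta_0/4$. Together with \eqref{eq:givariationforG} this gives $|\int g_i\,d\nu-\int g_i\,d\mu|<3\delta_0$, i.e.\ $\nu\in U$. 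The same estimate holds for points of $Y$ that are limits of shadowing orbits, by continuity of the $g_i$.

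\textbf{Step 3: entropy bound, the main obstacle.} We need $h_\nu(F)\le h(Y)\le H_0^{\perp}$. We count $(T,\eps)$-separated sets in $Y$: an orbit segment of length $T$ in $Y$ is determined, up to the scale $2\eta$ on the good blocks, by two pieces of data. The first is the choice of blocks, giving at most $m_k^{T/c_k}$ possibilities. The second is the gap pattern, where each gap lies in $[0,h_\eta(c_k)]$; after discretizing to a mesh, this costs a factor of order $(h_\eta(c_k)+1)^{T/c_k}$. This factor is subexponential relative to $c_k$ by the gap-function hypothesis. What remains is the uncontrolled behavior inside the gaps and at small scales.

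In case \ref{E1} we handle this at the entropy-expansive scale. The orbits of $Y$ are shadowed at scale $2\eta<\eps$ by finitely many ``skeletons'', and entropy expansiveness (Bowen's estimate $h(Y)\le h(Y,\eps)+\sup_x h(\Gamma_\eps(x))$) turns the count at scale $\eps$ into a genuine bound on $h(Y)$. In case \ref{E2} we do not bound $h(Y)$ directly. Instead, suppose some ergodic $\nu$ on $Y$ has $h_\nu(F)>H_0^{\perp}>h_{\exp}^{\perp}$. Then $\nu$ is almost expansive at a fixed scale $\eps$, so \cite[Proposition 3.4]{CT16} gives $h_\nu(F)=h_\nu(f_1,\zeta)$ for partitions of $d_1$-diameter less than $\eps$. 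That quantity is bounded by the scale-$\eps$ count above, yielding a contradiction. Hence every ergodic measure on $Y$ has entropy at most $H_0^{\perp}$.

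The gaps of length up to $h_\eta(c_k)$ contribute at most $h\cdot h_\eta(c_k)/c_k$ per unit time at scale $\eps$, which vanishes along our subsequence. The delicate point, which I expect to be the main difficulty, is making the scale-$\eps$ counting rigorous uniformly over gap choices, and reconciling the specification scale $\eta$ with the expansivity scale. I would first try taking $\eta$ much smaller than $\eps$ and using the $\eps_0$-separation of $\mathcal{E}_k$ only through the bound on $m_k$.
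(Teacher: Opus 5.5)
Your proposal is correct in outline and follows the paper's own proof. The paper takes $m_k=1$: a single good segment $(x_0,c_k)\in\widetilde{X^{\mathcal G}_{N,k}}$ repeated bi-infinitely at scale $\eta<\min\{\eps_0/8,\eta_g\}$, membership in $U$ checked exactly as in your Step 2, and your ``delicate point'' resolved by coding gap times on a $\beta$-mesh and covering each gap piece with a $(\cdot,\eps_0/8)$-separated set of $X$ of size $C_h e^{2h(\cdot)}$, which gives $h(Y_0,\eps_0/2)\le 2\bigl(\log(C_h h_\eta(c_k)/\beta)+h(h_\eta(c_k)+\beta)\bigr)/c_k$. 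The one step to tighten is the passage from fixed-scale entropy to true entropy: the paper gets $h(Y_0,\eps_0/2)=h(Y_0)$ from \cite[Proposition 2.7]{WangWu} in case \ref{E1} and from \cite[Proposition 3.7]{CT16} in case \ref{E2}, and the latter is what actually justifies your claim that $h_\nu(f_1,\zeta)$ is controlled by the scale-$\eps$ count, since that claim does not follow from \cite[Proposition 3.4]{CT16} alone.
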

\begin{proof}
Fix any $H_0^{\perp}$ as in the statement. In addition to our choice of $\eps_0$ from \eqref{eq:conditioneps0}, we assume that
\begin{equation} \label{eq:expansiveconditioneps0}
\begin{aligned}
    & (X,F) \text{ is entropy expansive at scale }\eps_0 \text{ when it is of type \ref{E1}, } \\
    & \text{and } h^{\perp}_{\exp}(\eps_0)<H_0^{\perp} \text{ when it is of type \ref{E2}.}
\end{aligned}
\end{equation}
Recall from Remark \ref{rmk:heta} that we denote by $h_{\eta}$ the gap function for $\mathcal{G}^1$ at scale $\eta$. Fix $\eta<\min\{\eps_0/8,\eta_g\}$, and some sufficiently large integer $k>N_2$ satisfying $c_k>\log k$, $h_{\eta}(k)/\log k<\delta_0/32g_{\max}$, and one additional condition that appears in the end of the proof (which will be satisfied as long as $k$ is large enough). In particular, we have 
\begin{equation} \label{eq:hetack}
    h_{\eta}(c_k)/c_k<h_{\eta}(k)/\log k<\delta_0/32g_{\max}.
\end{equation}
Let $(x_0,c_k)\in \widetilde{X^{\mathcal{G}}_{N,k}}$, whose existence is guaranteed by Lemma \ref{lem:shorttailentropy}.\footnote{We do not need the lower bound from Lemma \ref{lem:shorttailentropy} in $\mathsection 3.2$ as we only need one orbit segment with a sufficiently large $k$. The full power of the lemma will be revealed in $\mathsection 3.3$.} Since $(x_0,c_k)\in \mathcal{G}^1$, an application of Lemma \ref{lem:infinitespecification} to bi-infinite sequence of identical orbit segment $(x_0,c_k)$ at scale $\eta$ gives rise to $y_0\in X$ and a sequence of transition times $(h_i)_{i\in\mathbb{Z}}$ such that $d_{c_k}(x_0,f_{m_i}(y_0))\leq \eta$, where
\begin{equation} \label{eq:defmi}
    m_0=0, \quad m_i:=ic_k+\sum_{j=0}^{i-1}h_j, \quad m_{-i}:=-ic_k-\sum_{j=-i}^{-1}h_j, \quad \forall i\in \mathbb{N}.
\end{equation}
Let $Y_0:=\overline{\{f_t(y_0): {t\in \mathbb{R}}\}}$. 

We first show that every ergodic measure supported on $Y_0$ is in $U$. To see this, for any $y\in Y_0$ and $s>0$, by our definition of $Y_0$, there must be some $t=t(s,y)\in \mathbb{R}$ such that $d_s(y,f_t(y_0))<\eta<\eta_g$. Let $i_1=i_1(t)\in \mathbb{Z}$ be such that $t\in [m_{i_1-1},m_{i_1})$, and $i_2=i_2(t)\in \mathbb{Z}$ be such that $t+s\in [m_{i_2},m_{i_2+1})$. Writing $G_i^{c_k}(x):=\int_0^{c_k}g_i(f_u(x))du$ for all $x\in X$ and $i\in \{1,...,n\}$, as long as $i_2-i_1>16\delta_0/g_{\max}$, we have from \eqref{eq:givariationforG} and \eqref{eq:hetack} that
\begin{equation} \label{eq:measureonY0step1}
    \begin{aligned}
    &\Big|\int_t^{t+s}g_i(f_u(y_0))du-s\int g_id\mu\Big|\\
    =&\Big|\Big(\int_t^{m_{i_1}}+\sum_{l=i_1}^{i_2-1}(\int_{m_{l}}^{m_{l}+c_k}+\int_{m_{l}+c_k}^{m_{l}+c_k+h_{l}})+\int_{m_{i_2}}^{t+s}\Big)g_i(f_u(y_0))du-s\int g_id\mu\Big| \\
    \leq &\Big|\sum_{l=i_1}^{i_2-1}\int_{m_{l}}^{m_{l}+c_k}g_i(f_u(y_0))du-(i_2-i_1)c_k\int g_id\mu\Big|\\
    &+(2(i_2-i_1)h_{\eta}(c_k)+4c_k+4h_{\eta}(c_k))g_{\max} \\
    \leq &\Big|(i_2-i_1)G_i^{c_k}(x_0)-(i_2-i_1)c_k\int g_id\mu\Big|+(i_2-i_1)\text{Var}_{c_k}(g_i,\eta) \\
    &+2((i_2-i_1)h_{\eta}(c_k)+2c_k+2h_{\eta}(c_k))g_{\max} \\
    <&3(i_2-i_1)c_k\delta_0/2+(i_2-i_1)c_k\delta_0/8+4g_{\max}(i_2-i_1)c_k(h_{\eta}(c_k)/c_k+1/(i_2-i_1)) \\
    <&(i_2-i_1)c_k(3\delta_0/2+\delta_0/8+\delta_0/8+\delta_0/4)\leq 2\delta_0s.
\end{aligned}
\end{equation}
Notice that as long as $s>(3+16\delta_0/g_{\max})(h_{\eta}(c_k)+c_k)$, we must have $i_2-i_1>16\delta_0/g_{\max}$. Consequently, for all such $s$, we have
\begin{equation} \label{eq:measureinY0step2}
    \begin{aligned}
    &\Big|\int_0^{s}g_i(f_u(y))du-s\int g_id\mu\Big| \\
    \leq& \Big|\int_0^{s}g_i(f_u(y))du-\int_t^{t+s}g_i(f_u(y_0))du\Big|+\Big|\int_t^{t+s}g_i(f_u(y_0))du-s\int g_id\mu\Big| \\
    <&\delta_0s/8+2\delta_0s<3\delta_0s,
\end{aligned}
\end{equation}
which implies that every ergodic measure supported on $Y_0$ is in $U$ by the arbitrary choice on $y\in Y_0$ and Birkhoff Ergodic Theorem. This fact, by variational principle, reduces the proof of Proposition \ref{prop:entropysmalldense} to which of the following claim:

\textbf{Claim:} $h(Y_0)\leq H_0^{\perp}$.

We proceed the proof of the above claim by contradiction. Suppose that $h(Y_0)>H_0^{\perp}$. By our additional assumption \eqref{eq:expansiveconditioneps0} on $\eps_0$, it follows from \cite[Proposition 3.7]{CT16} and \cite[Proposition 2.7]{WangWu} respectively that
\begin{equation} \label{eq:entropyeps0Y0}
    h(Y_0,\eps_0/2)=h(Y_0)>H_0^{\perp}.
\end{equation}
Let $I:=\{m_i\}_{i\in \mathbb{Z}}$, where $m_i$ are as in \eqref{eq:defmi}. For each $t\in \mathbb{R}$, let $t_0=t_0(t)\in [0,c_k+h_{\eta}(c_k)]$ be the smallest non-negative number making $t+t_0\in I$, and then let $s_i:=\sum_{l=0}^ih_{n+l}$ for each $i\in \mathbb{N}$ provided $t+t_0=m_n$, and $s_{-1}:=0$. Consequently, each $t\in \mathbb{R}$ is associated with $\{t_0,\{s_i\}_{i\in \mathbb{N}}\}$. By compactness of $X$ and continuity of flow, there exists some constant $\beta\in (0,1)$ such that 
\begin{equation} \label{eq:defbeta} 
    \max_{x\in X,t\in [-\beta,\beta]}d(x,f_t(x))\ll \eps_0/8. 
\end{equation}
By shrinking $\beta$ if necessary, we may always assume that 
\begin{equation} \label{eq:defhetabeta}
    h_{\eta,\beta}:=h_{\eta}(c_k)/\beta\in \mathbb{N}
\end{equation}
Let
\begin{equation} \label{def:eqIm}
    I_m:=[m\beta,(m+1)\beta) \quad \text{ for each }m\in \mathbb{N}.
\end{equation}
Fix a $T>2(c_k+h_{\eta}(c_k))$. For any $t\in \mathbb{R}$, let $k_T=k_T(t)\in \mathbb{N}$ be such that
\begin{equation} \label{eq:defT}
    s_{k_T-1}+k_Tc_k\leq T-t_0 <s_{k_T}+(k_T+1)c_k.
\end{equation}
Letting $\mathbb{N}_{k,T}:=[\lfloor\frac{T}{c_k+h_{\eta}(c_k)}\rfloor-1,\lceil \frac{T}{c_k}\rceil]\cap \mathbb{N}$, it is clear that 
\begin{equation} \label{eq:NkTsize}
    k_T\in \mathbb{N}_{k,T}, \ \text{ and } \ \# \mathbb{N}_{k,T} \leq \frac{T}{c_k}-\frac{T}{c_k+h_{\eta}(c_k)}+4.
\end{equation}
For any $m\in \mathbb{N}_{k,T}$, let $K_m:=\{t\in \mathbb{R}:k_T(t)=m\}$. For each $t\in K_m$, let $I_m(t):=\{i_0,j_0,...,j_m\}\in \mathbb{N}^{m+2}$, where
\begin{itemize}
    \item $i_0$ is such that $t_0\in I_{i_0}$;
    \item $\{j_i\}_{i=0}^m$ is such that $t_0+s_i\in I_{j_i}$ for all $i\in \{0,1,...,m\}$. 
\end{itemize}
We also write $j_{-1}:=i_0$. Denote the collection of all possible $\{I_m(t):t\in K_m\}$ by $R_m$. Since $j_l-j_{l-1}\in [0,h_{\eta,\beta}]$, writing $c_{k,\beta}:=\lceil c_k/\beta \rceil$, we have
\begin{equation} \label{eq:Rmsize}
    \# R_m\leq (c_{k,\beta}+h_{\eta,\beta})(h_{\eta,\beta})^{m+1}.
\end{equation}
Given any such $\vec{r}_m:=\{i_0,j_0,...,j_m\}\in R_m$, let 
\begin{equation*}
    \begin{aligned}
T_m&=T_m(\vec{r}_m):=\{t\in K_m:I_m(t)=\vec{r}_m\},\\
Y_m&=Y_m(\vec{r}_m):=\{f_t(y_0):t\in T_m(\vec{r}_m)\}.
    \end{aligned}
\end{equation*}
We first evaluate $\Lambda_T(Y_m,\eps_0/2)$ for any given $\vec{r}_m$. Notice that $s_i-s_{i-1}\in [(j_i-j_{i-1}-1)\beta,(j_i-j_{i-1}+1)\beta]$ for all $i\in \{0,1,...,m\}$. Let
\begin{itemize}
    \item $J_0^{i_0}$ be a maximally $(i_0\beta,\eps_0/8)$-separated set of $X$,
    \item $J_i$ be $((j_i-j_{i-1}+1)\beta,\eps_0/8)$-separated of $X$ for all $i\in \{0,1,...,m\}$.
\end{itemize}
Writing $y_t:=f_t(y_0)$, we define a map $\pi:Y_m\to J_0^{i_0}\times \prod_{i=0}^m J_i$ by $\pi(y_t)=(z_{i_0},z_0,...,z_m)$, where 
\begin{itemize}
    \item $d_{i_0\beta}(z_{i_0},y_t)\leq\eps_0/8$,
    \item $d_{(j_{i}-j_{i-1}+1)\beta}(z_i,y_{t_0+t+(i+1)c_k+s_{i-1}})\leq\eps_0/8$ for all $i\in \{0,1,...,m\}$.
\end{itemize}
We want to investigate how $\pi$ behaves. Let $t_1,t_2\in T_m$ be such that $\pi(y_{t_1})=\pi(y_{t_2})$. We will evaluate $d(y_{t+t_1},y_{t+t_2})$ for each $t\in [0,T]$.
\begin{enumerate}
    \item If $t\leq i_0\beta$, we have 
    \begin{equation} \label{eq:dt2t1caseI}
        d(y_{t+t_1},y_{t+t_2})\leq d(y_{t+t_1},f_t(z_{i_0}))+d(y_{t+t_2},f_t(z_{i_0}))\leq \eps_0/4.
    \end{equation}
    \item If $t\in [t_0(t_1)+ic_k+s_{i-1}(t_1),t_0(t_1)+(i+1)c_k+s_{i-1}(t_1)]$ for some $i\in \{0,...,m-1\}$, writing $r_i:=t-(t_0(t_1)+ic_k+s_{i-1}(t_1))$, we have $r_i\in [0,c_k]$ and thus 
    \begin{equation} \label{eqyt1dist}
        d(y_{t_1+t},f_{r_i}(x_0))\leq \eta<\eps_0/8.
    \end{equation}
    Meanwhile, as $|t_0(t_2)+s_{i-1}(t_2)-(t_0(t_1)+s_{i-1}(t_1))|\leq \beta$, we know 
    $$
r_i+t_0(t_2)+s_{i-1}(t_2)+ic_k\in [t-\beta,t+\beta],
    $$
    which by \eqref{eq:defbeta} implies that
    \begin{equation} \label{eqyt2dist1}
        d(y_{t_2+t},y_{t_2+r_i+t_0(t_2)+s_{i-1}(t_2)+ic_k})<\eps_0/8.
    \end{equation}
    Additionally, we also notice from $r_i\in [0,c_k]$ that
    $$
d(y_{t_2+r_i+t_0(t_2)+s_{i-1}(t_2)+ic_k},f_{r_i}(x_0))\leq \eta<\eps_0/8,
    $$
    which together with \eqref{eqyt2dist1} implies that
    \begin{equation} \label{eqyt2dist2}
        d(y_{t_2+t},f_{r_i}(x_0))<\eps_0/4.
    \end{equation}
    Consequently, it follows from \eqref{eqyt1dist} and \eqref{eqyt2dist2} that
\begin{equation} \label{eq:dt2t1caseII}
    d(y_{t_2+t},y_{t_1+t}) <\eps_0/2.
\end{equation}
    \item If $t\in [t_0(t_1)+(i+1)c_k+s_{i-1}(t_1),t_0(t_1)+(i+1)c_k+s_{i}(t_1)]$ for some $i\in \{0,...,m-1\}$, writing $r_i':=t-(t_0(t_1)+(i+1)c_k+s_{i-1})$, we know $r_i'\in[0,(j_i-j_{i-1}+1)\beta]$ and thus
    $$
d(y_{t_1+t},f_{r_i'}(z_i))\leq \eta<\eps_0/8.
    $$
    Since $|t_0(t_2)+s_{i-1}(t_2)-(t_0(t_1)+s_{i-1}(t_1))|\leq \beta$, we know 
    $$
r_i'+t_0(t_2)+s_{i-1}(t_2)+(i+1)c_k\in [t-\beta,t+\beta].
    $$
    Together with $d(y_{t_2+r_i'+t_0(t_2)+s_{i-1}(t_2)+(i+1)c_k},f_{r_i'}(z_i))\leq \eta<\eps_0/8$, an argument parallel to which leading towards \eqref{eq:dt2t1caseII} gives that
    \begin{equation} \label{eq:dt2t1caseIII}
        d(y_{t_2+t},y_{t_1+t}) <\eps_0/2.
    \end{equation}
\end{enumerate}
A combination of \eqref{eq:dt2t1caseI}, \eqref{eq:dt2t1caseII} and \eqref{eq:dt2t1caseIII} implies that for $y_{t_1},y_{t_2}\in Y_m$ sharing the same image under $\pi$, we must have $d_T(y_{t_1},y_{t_2})<\eps_0/2$. In particular, this implies that for each $\vec{r}_m\in R_m$, we have
$$
\Lambda_T(Y_m,\eps_0/2)\leq \#J_0^{i_0}\prod_{i=0}^m \# J_i.
$$
We also know from \eqref{upper} that $\forall \zeta>0$, there exists some constant $C_{\zeta}=C_{\zeta}(X\times \mathbb{R}^+,\eps_0/8)$ such that 
\begin{equation} \label{eq:defCgamma}
    \Lambda_{s}(X,\eps_0/8)<C_{\zeta}e^{s(h+\zeta)} \quad \text{ for all }s\geq 0.
\end{equation}
Since $i_0\beta\leq c_k+h_{\eta}(c_k)$ and $(j_{i}-j_{i-1}+1)\beta\leq h_{\eta}(c_k)+\beta$ for all $i\in \{0,1,...,m\}$, by plugging in $\zeta=h$ into \eqref{eq:defCgamma}, it follows that $ \#J_0^{i_0}<C_he^{2h(c_k+h_{\eta}(c_k))}$ and $\# J_i<C_{h}e^{2h(h_{\eta}(c_k)+\beta)}$ for all $i$. Consequently, we have
\begin{equation} \label{eq:LambdaTYm'I}
    \Lambda_T(Y_m,\eps_0/2)< C_{h}^{m+2}e^{2h(c_k+(m+2)(h_{\eta}(c_k)+\beta))}.
\end{equation}
Writing $U_T:=\lceil\frac{T}{c_k}\rceil$, recall from \eqref{eq:NkTsize} that $m\leq U_T$. Note that $Y_0$ is the closure of the orbit of $y_0$, which is in turn partitioned into possible $Y_m$'s. It then follows from \eqref{eq:NkTsize}, \eqref{eq:Rmsize}, and \eqref{eq:LambdaTYm'I} that
\begin{equation*}
\begin{aligned}
    &\Lambda_T(Y_0,\eps_0/2) \leq \# \mathbb{N}_{k,T} \max\limits_{m\in \mathbb{N}_{k,T}}\# R_m \max\limits_{\vec{r}_m\in R_m}\{\# \Lambda_T(Y_m(\vec{r}_m),\eps_0/2)\}\\
    <&\Big(\frac{T}{c_k}-\frac{T}{c_k+h_{\eta}(c_k)}+4\Big)(c_{k,\beta}+h_{\eta,\beta})(h_{\eta,\beta})^{U_T+1}C_{h}^{U_T+2}e^{2h(c_k+(U_T+2)(h_{\eta}(c_k)+\beta))}.
\end{aligned}
\end{equation*}
By taking logarithm, dividing by $T$, and letting $T\to \infty$ on both sides of the above equation, we have
$$
h(Y_0,\eps_0/2)<\frac{2(\log (C_{h}h_{\eta}(c_k)/\beta)+h(h_{\eta}(c_k)+\beta)))}{c_k}<H_0^{\perp},
$$
as long as $k$ is sufficiently large. This reaches a contradiction to \eqref{eq:entropyeps0Y0}, and therefore the assumption on $h(Y_0)>H_0^{\perp}$. Consequently, we conclude the proof of the claim, as well as the proposition.
\end{proof}

\subsection{Proof of Theorem \ref{thm:groundstates}} Let $(X,F,\mu)$ be as in the statement of Theorem \ref{thm:groundstates}, $U$ be in the form of \eqref{eq:defU}, and $H_0\in (H^{\perp},h_{\mu}(F))$. Write $h_0':=h_{\mu}(F)-H_0$, and let
$$
\eta_0\in (0,\min\{h_0'/100,\delta_0h_0/48g_{\max}\}).
$$
Fix a constant $H_0^{\perp}\in (H^{\perp},H_0)$. As in \eqref{eq:expansiveconditioneps0}, in addition to \eqref{eq:conditioneps0}, we require $\eps_0$ to satisfy $h_{\exp}^{\perp}(\eps_0)<H_0^{\perp}$. In summary, writing $h_{1/2}:=(H_0+h_{\mu}(F))/2$, we have
\begin{equation} \label{eq:obstructionentropyeps0}
    h^{\perp}_{\exp}(\eps_0)<H_0^{\perp}<H_0<h_{1/2}-2\eta_0.
\end{equation}

Recall from Lemma \ref{lem:shorttailentropy} that for each sufficiently large $k$, we are able to find $c_k\in (k/2,k]$, $\widetilde{X_{N,k}^{\mathcal{G}}}\subset [\mathcal{G}]_{c_k}$, and a $(c_k,\eps_0)$-separated set $\widetilde{E_{c_k}}$ for $\widetilde{X_{N,k}^{\mathcal{G}}}$ such that 
\begin{equation} \label{eq:sizeEtilde}
    \# \widetilde{E_{c_k}}>\frac{1}{2}e^{c_k(h_{\mu}(F)-2\eta_0)}.
\end{equation}
Notice that $\widetilde{E_{c_k}}\times \{c_k\}\subset \mathcal{G}^1$. As in the proof of Proposition \ref{prop:entropysmalldense}, the key lies in choosing a subset of $\widetilde{E_{c_k}}$ with appropriate amount of elements, to which we apply specification and build a compact invariant subset $Y_1$ satisfying the following conditions:
\begin{itemize}
    \item $h(Y_1)<h$; in particular, $Y_1\subsetneqq X$. This is critical in defining $\varphi$, as well as establishing the pressure gap condition of $\alpha\ph$ for each $\alpha\geq 0$, which leads to the uniqueness of equilibrium states for all such potentials.
    \item $h(Y_1)>H_0$. This is important in describing the limit behavior of the above equilibrium states in terms of entropy.
\end{itemize}

Let us proceed to the construction of $Y_1$. Let $\eta\in (0,\min\{\eps_0/8,\eta_g\})$ and $k$ be large enough such that 
\begin{enumerate} 
    \item \label{1} $k>\frac{8\log2}{h_0'}$,
    \item \label{2} $\frac{h_{\eta}(k)}{k}<\frac{\eta_0}{4h_{1/2}}$ and $\frac{\log(h_{\eta}(k)/\beta+1)}{k}<\frac{\eta_0}{4}$, where $\beta$ is as in \eqref{eq:defbeta}.
    \item \label{3} $\log (2C_hh_{\eta}(k)/\beta)<\frac{k\eta_0}{4}$ and $h_{\eta}(k)+\beta<\frac{k\eta_0}{8h}$, where $C_{h}$ is the constant in \eqref{eq:defCgamma} with $\zeta=h$.
\end{enumerate}
In particular, it follows from conditions \eqref{1}, \eqref{2}, and \eqref{3} respectively that
\begin{equation} \label{eq:1consequence}
    \frac{1}{2}e^{c_k(h_{\mu}(F)-2\eta_0)}>e^{c_kh_{1/2}},
\end{equation}
\begin{equation*}
    \frac{c_k h_{1/2}}{c_k+h_{\eta}(c_k)}>h_{1/2}-\frac{\eta_0}{2} \quad \text{ and } \quad \frac{\log(h_{\eta,\beta}+1)}{c_k}<\frac{\eta_0}{2}
\end{equation*}
and
\begin{equation} \label{eq:3consequence}
   \frac{1}{c_k}\Big(\log(2C_hh_{\eta,\beta})+2h(h_{\eta}(c_k)+\beta)\Big)<\eta_0,
\end{equation}
where $h_{\eta,\beta}$ is as in \eqref{eq:defhetabeta}.

Let $E_{c_k}\subset \widetilde{E_{c_k}}$ be such that
$$
\# E_{c_k}\in [e^{c_kh_{1/2}},e^{c_k(h_{1/2}+\eta_0)}],
$$
which is plausible by \eqref{eq:sizeEtilde} and \eqref{eq:1consequence}. For any sequence $(x_i)_{i\in \mathbb{Z}}$ in $ \widetilde{E_{c_k}}$, since $\widetilde{E_{c_k}}\times \{c_k\}\subset \mathcal{G}^1$, we can apply Lemma \ref{lem:infinitespecification} at scale $\eta$ to obtain $y=y((x_i,c_k)_{i\in \mathbb{Z}})\in X$ and a sequence of transition times $(h_j)_{j\in \mathbb{Z}}:=(h_j((x_i)_{i\in \mathbb{Z}}))_{j\in \mathbb{Z}}$ such that $d_{c_k}(x_i,f_{m_i}(y))\leq \eta$, where the definition on 
$$(m_j)_{j\in \mathbb{Z}}:=(m_j((x_i)_{i\in \mathbb{Z}}))_{j\in \mathbb{Z}}$$
is as in \eqref{eq:defmi}. Let 
\begin{equation} \label{eq:Y1def}
    Y:=\Big\{y((x_i,c_k)_{i\in \mathbb{Z}}):(x_i)_{i\in \mathbb{Z}}\in \prod_{i\in \mathbb{Z}}{E_{c_k}}\Big\}, \quad \text{ and }\quad Y_1:=\overline{\{f_t(Y): t\in \mathbb{R}\}}.
\end{equation}
\begin{lemma} \label{lem:measureY1inU}
    Every ergodic measure supported on $Y_1$ is in $U$.
\end{lemma}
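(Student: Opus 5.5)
The plan is to copy the argument from the proof of Proposition \ref{prop:entropysmalldense}, i.e. estimates \eqref{eq:measureonY0step1} and \eqref{eq:measureinY0step2}. The only change is that the orbit segments shadowed by points of $Y$ are now different elements $x_i\in E_{c_k}$ rather than a single repeated $x_0$. This causes no difficulty: \eqref{eq:givariationforG} holds uniformly for every $x\in\widetilde{X_{N,k}^{\mathcal{G}}}$, and hence for every $x_i\in E_{c_k}\subset \widetilde{E_{c_k}}$. By the Birkhoff ergodic theorem it suffices to show the following: for every $z\in Y_1$, every $i\in\{1,\dots,n\}$, and every sufficiently large $s$,
$$\Big|\int_0^s g_i(f_u(z))du-s\int g_id\mu\Big|<3\delta_0 s .$$
Applying this to a generic point of an ergodic measure $\nu$ supported on $Y_1$ gives $|\int g_id\nu-\int g_id\mu|\le 3\delta_0$. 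To get the strict inequality in \eqref{eq:defU}, I will aim for a slightly smaller constant, which the slack in \eqref{eq:measureonY0step1} allows.

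\textbf{Step 1: points of $f_{\mathbb R}(Y)$.} Fix $y=y((x_j,c_k)_{j\in\mathbb Z})\in Y$ with its times $m_j$ and gaps $h_j\le h_\eta(c_k)$. Take a window $[t,t+s]$ and choose $i_1,i_2$ as before, with $t\in[m_{i_1-1},m_{i_1})$ and $t+s\in[m_{i_2},m_{i_2+1})$. Split the integral of $g_i(f_u(y))$ into four parts: the full shadowing blocks $[m_l,m_l+c_k]$, the gaps, and the two boundary pieces. On each block, $d_{c_k}(x_l,f_{m_l}(y))\le\eta<\eta_g$ gives
$$\Big|\int_{m_l}^{m_l+c_k}g_i(f_u y)du-G_i^{c_k}(x_l)\Big|\le c_k\delta_0/8,$$
and by \eqref{eq:givariationforG} we have $|G_i^{c_k}(x_l)-c_k\int g_id\mu|<3c_k\delta_0/2$. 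The gaps contribute at most $(i_2-i_1)h_\eta(c_k)g_{\max}$, and the boundary pieces contribute at most $4(c_k+h_\eta(c_k))g_{\max}$. Using condition \eqref{2} on $k$, namely $h_\eta(c_k)/c_k$ small, together with $i_2-i_1$ large once $s\gg c_k+h_\eta(c_k)$, the total error is bounded by $2\delta_0 s$, exactly as in \eqref{eq:measureonY0step1}. This bound is uniform in $y\in Y$ and $t\in\mathbb R$.

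\textbf{Step 2: passing to the closure.} For $z\in Y_1=\overline{f_{\mathbb R}(Y)}$ and a fixed $s$, pick $t$ and $y\in Y$ with $d_s(z,f_t(y))<\eta_g$; this is possible by continuity of the flow on the compact interval $[0,s]$. Then the two integrals over $[0,s]$ differ by at most $\delta_0 s/8$, and combining with Step 1 gives the bound $<3\delta_0 s$ as in \eqref{eq:measureinY0step2}.

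The only point needing care, and the place I expect the main obstacle, is making Step 1 uniform. The gaps $h_j$ and the centers $x_j$ now depend on the sequence, so every constant must depend only on $c_k$, $h_\eta(c_k)$, $g_{\max}$ and $\delta_0$. They do, since each gap satisfies $h_j\le\max\{h_\eta(c_k),h_\eta(c_k)\}=h_\eta(c_k)$ because all segments have length $c_k$, and $\eta<\eta_g$ was fixed in advance.
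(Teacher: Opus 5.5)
Your proposal is correct and follows the paper's proof essentially verbatim. The paper also reruns the block/gap/boundary estimate \eqref{eq:measureonY0step1} with $(j_2-j_1)G_i^{c_k}(x_0)$ replaced by $\sum_j G_i^{c_k}(x_j)$, transfers the bound to an arbitrary point of $Y_1$ via a nearby orbit segment of some $f_t(y)$, and concludes by the Birkhoff ergodic theorem. Your remark on the strict inequality is a refinement the paper leaves implicit: the estimates give a limiting bound of $2\delta_0+\delta_0/8<3\delta_0$.
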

\begin{proof}
    The proof parallels which of the corresponding statement for $Y_0$ in Proposition \ref{prop:entropysmalldense}. For any $y_1\in Y_1$ and $s>0$, it follows from the definition of $Y_1$ that there must exist some $(x_i)_{i\in \mathbb{Z}}\in \prod_{i\in \mathbb{Z}}E_{c_k}$ and $t=t(y_1,s)\in \mathbb{R}$ such that $d_s(y_1,f_t(y))<\eta\leq \eta_g$, where $y=y((x_i,c_k)_i)$. Let $j_1=j_1(t)$, $j_2=j_2(t+s)$ be integers such that $t\in [m_{j_1-1},m_{j_1})$ and $t+s\in [m_{j_2},m_{j_2+1})$ respectively. As long as $s>(3+16\delta_0/g_{\max})(h_{\eta}(c_k)+c_k)$, we must have $j_2-j_1>16\delta_0/g_{\max}$. Then, a repetition of \eqref{eq:measureonY0step1} (the only difference is to replace $(j_2-j_1)G_i^{c_k}(x_0)$ by $\sum_{j=j_1}^{j_2-1}G_i^{c_k}(x_j)$) gives that
    $$
\Big|\int_t^{t+s}g_i(f_u(y))du-s\int g_id\mu\Big|\leq 2\delta_0s.
    $$
    As in \eqref{eq:measureinY0step2}, for all such $s$, we then have
    $$
\Big|\int_0^{s}g_i(f_u(y_1))du-s\int g_id\mu\Big|<3\delta_0s.
    $$
    This fact, together with our arbitrary choice on $y_1\in Y_1$, concludes the proof of the lemma by Birkhoff Ergodic Theorem.
\end{proof}
The following result provides us the desired entropy estimate for $Y_1$, as mentioned at the beginning of $\mathsection 3.3$.
\begin{proposition} \label{prop:entropyofY1}
    The topological entropy of $Y_1$ satisfies 
    $$h(Y_1)\in (h_{1/2}-\eta_0,h_{1/2}+2\eta_0).$$
\end{proposition}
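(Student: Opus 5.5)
We prove the two bounds separately. The upper bound $h(Y_1)<h_{1/2}+2\eta_0$ follows the coding scheme in the proof of Proposition \ref{prop:entropysmalldense}. The lower bound $h(Y_1)>h_{1/2}-\eta_0$ comes from exhibiting many $(T,\eps_0/2)$-separated points in $Y$ directly, using that $E_{c_k}$ is $(c_k,\eps_0)$-separated.

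\emph{Upper bound.} We first note that $h(Y_1,\eps_0/2)=h(Y_1)$. This holds by \eqref{eq:obstructionentropyeps0} together with \cite[Proposition 3.7]{CT16} (or \cite[Proposition 2.7]{WangWu}). It is legitimate at least when $h(Y_1)>H_0^{\perp}$; otherwise the upper bound is trivial.

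Fix $T$ large. Every point of $Y_1$ is approximated in $d_T$, to within $\eta$, by some $f_t(y)$ with $y\in Y$. We code such a segment as in the proof of Proposition \ref{prop:entropysmalldense}. The code consists of:
\begin{itemize}
\item the number $m\in\mathbb{N}_{k,T}$ of complete blocks;
\item the discretized transition data $\vec r_m\in R_m$, at resolution $\beta$;
\item an $\eps_0/8$-separated point covering the initial partial segment of length at most $c_k+h_\eta(c_k)$;
\item an $\eps_0/8$-separated point covering each gap of length at most $h_\eta(c_k)+\beta$;
\item the element of $E_{c_k}$ used in each of the $m+1$ blocks.
\end{itemize}
This last item is the only new ingredient compared with $Y_0$. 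The estimates \eqref{eq:dt2t1caseI}--\eqref{eq:dt2t1caseIII} carry over verbatim, so two points with the same code are $(T,\eps_0/2)$-close. Hence
\[
\Lambda_T(Y_1,\eps_0)\lesssim \#\mathbb{N}_{k,T}\,(c_{k,\beta}+h_{\eta,\beta})\,h_{\eta,\beta}^{U_T+1}\,C_h^{U_T+2}\,e^{2h(c_k+(U_T+2)(h_\eta(c_k)+\beta))}\,(\#E_{c_k})^{U_T+1}.
\]
We take $\tfrac1T\log$ and let $T\to\infty$, using $U_T\approx T/c_k$ and $\#E_{c_k}\le e^{c_k(h_{1/2}+\eta_0)}$. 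This gives
\[
h(Y_1)\le h_{1/2}+\eta_0+\tfrac{1}{c_k}\bigl(\log(C_hh_{\eta,\beta})+2h(h_\eta(c_k)+\beta)\bigr).
\]
By \eqref{eq:3consequence} the last term is $<\eta_0$, so $h(Y_1)<h_{1/2}+2\eta_0$.

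\emph{Lower bound.} For $m$ large, consider distinct words $(x_0,\dots,x_{m-1})\in E_{c_k}^m$, extended arbitrarily to bi-infinite sequences, with corresponding points $y\in Y$. Transition times vary with the word, so we pigeonhole on the discretized gap vector $(\lfloor h_j/\beta\rfloor)_{j<m}$. There are at most $(h_{\eta,\beta}+1)^m$ classes, so some class contains at least $(\#E_{c_k})^m(h_{\eta,\beta}+1)^{-m}$ words. Two words in the same class first differ at some block $j$. There the block start times $m_j$ differ by at most $m\beta$; this is only a problem if it is not small.

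This is the main obstacle: cumulative drift. To handle it, we fix the finer pigeonhole by exact block index up to shifts bounded by $\beta$. That is, we classify by the vector of partial sums $\lfloor m_j/\beta\rfloor$, which still gives at most $(h_{\eta,\beta}+1)^m$ classes, since each increment lies in a set of at most $h_{\eta,\beta}+1$ values. Within a class, $|m_j(y)-m_j(y')|<\beta$. Then $d_{c_k}(x_j,f_{m_j}y)\le\eta$, the separation $d_{c_k}(x_j,x_j')>\eps_0$, and \eqref{eq:defbeta} together give $d_{T}(y,y')>\eps_0/2$ with $T=m(c_k+h_\eta(c_k))$.

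Therefore
\[
h(Y_1)\ge \frac{c_kh_{1/2}-\log(h_{\eta,\beta}+1)}{c_k+h_\eta(c_k)}>h_{1/2}-\tfrac{\eta_0}{2}-\tfrac{\eta_0}{2}=h_{1/2}-\eta_0,
\]
by the consequences of condition \eqref{2}.
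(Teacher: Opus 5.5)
Your proposal is correct and follows the paper's proof essentially step for step: the upper bound uses the same coding map into $J_0^{i_0}\times\prod_{i=0}^m(E_{c_k}\times J_i)$, and your lower bound (fixed padding, pigeonholing on the $\beta$-discretized cumulative transition times to remove drift, then separating at the first block where the words differ) is the paper's argument with $\vec r_m\in R^0_{Z,m}$. One small bookkeeping fix: since you invoke $h(Y_1,\eps_0/2)=h(Y_1)$, you should bound $\Lambda_T(Y_1,\eps_0/2)$ rather than $\Lambda_T(Y_1,\eps_0)$. This follows by applying your code count directly to $(T,\eps_0/2)$-separated subsets of the dense orbit $\{f_t(y):y\in Y,\ t\in\mathbb{R}\}$, using that $(T,\eps_0/2)$-separation is an open condition.
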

\begin{proof}
    For any $(x_i)_{i\in \mathbb{Z}}\in \prod_{i\in \mathbb{Z}}E_{c_k}$, let 
    $$
I=I((x_i)_{i\in\mathbb{Z}}):=\{m_j((x_i)_{i\in \mathbb{Z}}): j\in \mathbb{Z}\}.
    $$
    Write $Z:=(\prod_{i\in \mathbb{Z}}E_{c_k})\times \mathbb{R}$. Given any $((x_i)_i,t)\in Z$, define $(t_0,(s_j)_{j\in \mathbb{N}})$ as in Proposition \ref{prop:entropysmalldense} by letting 
    \begin{itemize}
        \item $t_0=t_0((x_i)_i,t)\in [0,h_{\eta}(c_k)+c_k)$ be the smallest non-negative number to make $t+t_0\in I$; let $n\in \mathbb{Z}$ be such that $t+t_0=m_n$;
        \item $s_j=s_j((x_i)_i,t):=\sum_{l=0}^j h_{n+l}$.
    \end{itemize}
    Fix a constant $T>2(c_k+h_{\eta}(c_k))$. For each $((x_i)_i,t)\in Z$, let $k_T=k_T((x_i)_i,t)$ be as in \eqref{eq:defT}. By writing $L_T:=\lfloor\frac{T}{c_k+h_{\eta}(c_k)}\rfloor-1$ and recalling $U_T=\lceil \frac{T}{c_k}\rceil$, again we have
    $$
    k_T\in \mathbb{N}_{k,T}=[L_T,U_T].
    $$
    Meanwhile, for each $m\in \mathbb{N}$, let $K_m:=\{((x_i)_i,t):k_T((x_i)_i,t)=m\}$. Recall our choice on $\beta$ from \eqref{eq:defbeta}, as well as the definition for $(I_m)_{m\in \mathbb{N}}$ from \eqref{def:eqIm}. For each $m\in \mathbb{N}_{k,T}$ and $((x_i)_i,t)\in K_m$, define $\{i_0,j_0,...,j_m\}\in \mathbb{N}^{m+2}$ as before by letting 
    \begin{itemize}
    \item $i_0$ be such that $t_0\in I_{i_0}$;
    \item $\{j_i\}_{i=0}^m$ be such that $t_0+s_i\in I_{j_i}$ for all $i\in \{0,1,...,m\}$. 
\end{itemize}
Let $I_m((x_i)_i,t):=\{i_0,j_0,...,j_m\}$. For each $m\in \mathbb{N}_{k,T}$, denote the collection of all possible $\{I_m((x_i)_i,t):((x_i)_i,t)\in K_m\}$ by $R_{Z,m}$. As in \eqref{eq:Rmsize}, we have
\begin{equation} \label{eq:sizePm}
    \# R_{Z,m} \leq (c_{k,\beta}+h_{\eta,\beta})(h_{\eta,\beta})^{m+1}.
\end{equation}

We first turn to the proof of the lower bound of $h(Y_1)$. In this case, we concentrate on the set of $((x_i)_i,t)\in Z$ with $t=0$. Fix any $\hat{x}\in E_{c_k}$. Let 
\begin{equation*}
\begin{aligned}
E^T_{\hat{x}}&:=\{(x_i)_{i\in \mathbb{Z}}:x_i=\hat{x} \text{ for all }i\leq -1 \text{ and all\ } i\geq L_T-1\},\\
Y^T&:=\{y((x_i)_i):(x_i)_i\in E^T_{\hat{x}}\},\\
Y^T_m&:=\{y((x_i)_i)\in Y^T: ((x_i)_i,0)\in K_m\}.
\end{aligned}
\end{equation*}
Meanwhile, writing $R_{Z,m}^0:=\{\{i_0,j_0,...,j_m\}\in R_{Z,m}, i_0=0\}$, for any $m\in \mathbb{N}_{k,T}$ and $\vec{r}_m\in R_{Z,m}^0$, write $Y^T(\vec{r}_m):=\{y=y((x_i)_i)\in Y^T_m:I_m((x_i)_i,0)=\vec{r}_m\}$. We will show that provided fixed $m$ and $\vec{r}_m$, $Y^T(\vec{r}_m)$ is $(T,\eps_0/2)$-separated.

To see this, let $(x_i)_i,(x_i')_i\in E^T_{\hat{x}}$ be such that $y:=y((x_i)_i), y':=y((x'_i)_i)$ are both in $Y^T(\vec{r}_m)$. Let $n\in \mathbb{N}$ be the smallest integer where $x_n\neq x_n'$. Writing $m_n:=m_n((x_i)_i)$, $m_n':=m_n((x_i')_i)$, then $|m_n-m_n'|<\beta$ and we have
\begin{equation*}
\begin{aligned}
    &d_T(y,y')\geq d_{c_k}(f_{m_n}(y),f_{m_n}(y'))\\
    \geq &d_{c_k}(f_{m_n}(y),f_{m_n'}(y'))-d_{c_k}(f_{m_n'}(y'),f_{m_n}(y')) \\
    \geq &d_{c_k}(x_n,x_n')-d_{c_k}(f_{m_n}(y),x_n)-d_{c_k}(f_{m_n'}(y'),x_n')-d_{c_k}(f_{m_n'}(y'),f_{m_n}(y')) \\
    \geq &\eps_0-2\eta-\eps_0/8>\eps_0/2.
\end{aligned}
\end{equation*}
Consequently, we know $Y^T(\vec{r}_m)$ is $(T,\eps_0/2)$-separated. Meanwhile, noticing that $i_0=0$, there are at most $(\frac{T}{c_k}-\frac{T}{c_k+h_{\eta}(c_k)}+4)(h_{\eta,\beta}+1)^{U_T+1}$ choices on $(m,\vec{r}_m)\in \mathbb{N}_{k,T}\times R_{Z,m}^0$. Since $\#E^T_{\hat{x}}=(\#E_{c_k})^{L_T-1}\geq e^{c_k(L_T-1)h_{1/2}}$, there exist some $m$ and $\vec{r}_m$ such that
$$
\#Y^T(\vec{r}_m)\geq \Big((\frac{T}{c_k}-\frac{T}{c_k+h_{\eta}(c_k)}+4)(h_{\eta,\beta}+1)^{U_T+1}\Big)^{-1}e^{c_k(L_T-1)h_{1/2}},
$$
which implies that
$$
\Lambda_T(Y_1,\eps_0/2)\geq \Big((\frac{T}{c_k}-\frac{T}{c_k+h_{\eta}(c_k)}+4)(h_{\eta,\beta}+1)^{U_T+1}\Big)^{-1}e^{c_k(L_T-1)h_{1/2}}.
$$
Taking logarithm on both sides of the above equation, dividing by $T$ and making $T\to \infty$, we have 
$$
h(Y_1)\geq \frac{c_k h_{1/2}}{c_k+h_{\eta}(c_k)}-\frac{\log(h_{\eta,\beta}+1)}{c_k}>h_{1/2}-\eta_0.
$$
This provides us with the desired lower bound on $h(Y_1)$. 

Now let us turn to the upper bound. The idea parallels which for $h(Y_0)$ in the proof of Proposition \ref{prop:entropysmalldense}. Given any $(m,\vec{r}_m)\in \mathbb{N}_{k,T}\times R_{Z,m}$, denote by $Z(\vec{r}_m)$ the collection of $((x_i)_i,t)\in Z$ whose $\{i_0,j_0,...,j_m\}$ is equal to $\vec{r}_m$. Also recall the definitions for $J_0^{i_0}$ and $(J_i)_{i=0}^m$ as follows:
\begin{itemize}
    \item $J_0^{i_0}$ is a maximally $(i_0\beta,\eps_0/8)$-separated set of $X$,
    \item $J_i$ is $((j_i-j_{i-1}+1)\beta,\eps_0/8)$-separated of $X$ for all $i\in \{0,1,...,m\}$.
\end{itemize}
Let $y=y_t((x_i,c_k)_i):=f_t(y(x_i,c_k)_i)$. We define a map $\pi:Z(\vec{r}_m)\to J_0^{i_0}\times \prod_{i=0}^m(E_{c_k}\times J_i)$ by
$$
\pi((x_i),t):=(z_{i_0},(x_n,z_0),(x_{n+1},z_1),...,(x_{n+m},z_m))
$$
such that 
\begin{itemize}
    \item $d_{i_0\beta}(z_{i_0},y_t)<\eps_0/8$,
    \item $d_{(j_{i}-j_{i-1}+1)\beta}(z_i,y_{t_0+t+(i+1)c_k+s_{i-1}})<\eps_0/8$ for all $i\in \{0,1,...,m\}$.
\end{itemize}
A repetition of the process leading towards \eqref{eq:dt2t1caseI}, \eqref{eq:dt2t1caseII} and \eqref{eq:dt2t1caseIII} gives the following: given any $(m,\vec{r}_m)\in \mathbb{N}_{k,T}\times R_{Z,m}$, if $((x_i)_i,t),((x_i')_i,t')\in Z(\vec{r}_m)$ are such that $\pi((x_i)_i,t)=\pi((x_i')_i,t')$, we must have 
$$
d_T(f_t(y((x_i,c_k)_i)),f_{t'}(y((x_i',c_k)_i)))<\eps_0/2,
$$
which implies that 
$$
\Lambda_T(Z(\vec{r}_m),\eps_0/2)\leq \#J_0^{i_0}\prod_{i=0}^m(\#E_{c_k}\#J_i).
$$
Let $\zeta$ from \eqref{eq:defCgamma} be equal to $h$ once again. Following the deduction of \eqref{eq:LambdaTYm'I} and using $\#E_{c_k}\leq e^{c_k(h_{1/2}+\eta_0)}$, we have
\begin{equation} \label{eq:LambdaTZIm'}
     \Lambda_T(Z(\vec{r}_m),\eps_0/2)< C_h^{m+2}e^{2h(c_k+(m+2)(h_{\eta}(c_k)+\beta))}e^{(m+1)c_k(h_{1/2}+\eta_0)}.
\end{equation}
Since $Z=\bigcup\limits_{(m,\vec{r}_m)\in \mathbb{N}_{k,T}\times R_{Z,m}}Z(\vec{r}_m)$, it follows from \eqref{eq:NkTsize}, \eqref{eq:sizePm}, and \eqref{eq:LambdaTZIm'} that
\begin{equation*} 
\begin{aligned}
&\Lambda_T(Z,\eps_0/2)\leq \sum_{(m,\vec{r}_m)\in \mathbb{N}_{k,T}\times R_{Z,m}}\Lambda_T(Z(\vec{r}_m),\eps_0/2) \\
<&(\frac{T}{c_k}-\frac{T}{c_k+h_{\eta}(c_k)}+4)(h_{\eta,\beta}+c_{k,\beta})(h_{\eta,\beta}+1)^{U_T+1}C_h^{U_T+2} \\
&\cdot e^{2h(c_k+(U_T+2)(h_{\eta}(c_k)+\beta))}e^{(U_T+1)c_k(h_{1/2}+\eta_0)}.
\end{aligned}
\end{equation*}
Taking logarithm on both sides of the equation above, dividing by $T$ and making $T\to \infty$, we have 

\begin{equation*}
\begin{aligned}
h(Y_1,\eps_0/2)&\leq \frac{1}{c_k}\Big(\log(2C_{h}h_{\eta,\beta})+2h(h_{\eta}(c_k)+\beta)\Big)+h_{1/2}+\eta_0\\
&<h_{1/2}+2\eta_0, 
\end{aligned}
\end{equation*}
where the last inequality results from \eqref{eq:3consequence}. Meanwhile, we have from \eqref{eq:obstructionentropyeps0} that $h(Y_1)>h_{1/2}-\eta_0>h^{\perp}_{\exp}(\eps_0)$, which again by \cite[Proposition 3.7]{CT16} implies $h(Y_1,\eps_0/2)=h(Y_1)$. Consequently, this gives the desired upper bound for $h(Y_1)$.
\end{proof}

Now we proceed the proof of Theorem \ref{thm:groundstates} as follows.
\begin{itemize}
    \item Analyze the behavior of pressure function $P(\alpha):=P(\alpha \ph)$ for each $\alpha\geq 0$, where $\ph(x)=-d(x,Y_1)$ for each $x\in X$, i.e. the negative distance function to $Y_1$. Such analysis is conducted by studying thermodynamic formalism of $\{\alpha \ph\}_{\alpha\geq 0}$.
    \item Apply approximation theorems from functional analysis to make the above uncountably many equilibrium states become maximizing measures for some function.
\end{itemize}

Let $\varphi(x)=-d(x,Y_1)$ for any $x\in X$. Since $h(Y_1)<h$, we have $Y_1\subsetneq X$ and hence $\ph$ is not permanently $0$. Let us look into thermodynamic formalism of $(\alpha\ph)_{\alpha\geq 0}$ by checking whether conditions in Theorem \ref{thm:WW1} are satisfied.
\begin{itemize}
    \item Recall from \eqref{eq:obstructionentropyeps0} that $\eps_0>0$ is such that $h_{\exp}^{\perp}(\eps_0)<h_{1/2}-\eta_0<h(Y_1)$. Then we have for each $\alpha\geq 0$ that
    $$
P_{\exp}^{\perp}(\alpha\ph,\eps_0)\leq h_{\exp}^{\perp}(\eps_0)<h(Y_1)\leq P(\alpha \ph),
    $$
    where the last inequality follows from variational principle. Consequently, we have 
    \begin{equation} \label{eq:obstructionpressure}
        P_{\exp}^{\perp}(\alpha\ph)\leq P_{\exp}^{\perp}(\alpha\ph,\eps_0)<P(\alpha \ph) \quad \text{ for all }\alpha\geq 0.
    \end{equation}
    \item $P([\mathcal{P}]\cup [\mathcal{S}],\alpha \ph)\leq h([\mathcal{P}]\cup [\mathcal{S}])\leq H^{\perp}<h(Y_1)\leq P(\alpha \ph)$.
    \item It is straightforward that $\ph$ is Lipschitz. Therefore, by the additional assumption in Theorem \ref{thm:pathelogic}, $\ph$ satisfies Bowen property along $\mathcal{G}$. 
\end{itemize}

Consequently, the conditions in Theorem \ref{thm:WW1} are all satisfied for the family of potentials $(\alpha \ph)_{\alpha\geq 0}$. This indicates the existence and uniqueness of equilibrium state for each $\alpha \ph$, and thus concludes Theorem \ref{thm:groundstates}(1). Denote such measure by $\mu_{\alpha \ph}$.

We observe that Theorem \ref{thm:groundstates}(2) follows as an immediate consequence of \eqref{eq:obstructionpressure} and Lemma \ref{lem:limites}. To prove (3), it suffices to notice that $\mathcal{M}_{\max}(\ph)=\mathcal{M}_{\max}(\alpha \ph)=\mathcal{M}_F(Y_1)$ for all $\alpha>0$. Since $h(Y_1)>h_{1/2}-\eta_0>H_0$, a combination of Proposition \ref{prop:zerotemperature} and variational principle will conclude (3), following an immediate contradiction argument.

Finally, to prove Theorem \ref{thm:groundstates}(4), we will analyze convexity of the pressure function $P(\alpha):=P(\alpha \ph)$.  Since $\mu_{\alpha\ph}$ is the unique equilibrium state for $\alpha\ph$ for each $\alpha\ge0$, we have (cf. \cite{Wal})
\begin{equation}\label{deri}
P'(\alpha)=\int \ph d\mu_{\alpha \ph}.
\end{equation}
\begin{enumerate}
\item If $\alpha \mapsto P(\alpha)$ is strictly convex for all $\alpha\geq 0$, by \eqref{deri}, we know $\int \ph d\mu_{\alpha \ph}$ is strictly increasing as $\alpha\to \infty$, which gives the desired injectivity of $\alpha\mapsto \mu_{\alpha \ph}$ in this case. The rest of (4) follows immediately from (3) by choosing some appropriate $\alpha_1<\alpha_2$ with sufficiently large $\alpha_1$.

\item If $\alpha \mapsto P(\alpha)$ is as follows: for any $\hat{\alpha}>0$, there exist $\hat{\alpha}(1),\hat{\alpha}(2)$ satisfying $\hat{\alpha}<\hat{\alpha}(1)<\hat{\alpha}(2)<\infty$ such that $P$ is strictly convex for all $\alpha\in (\hat{\alpha}(1),\hat{\alpha}(2))$. Then we are able to build a sequence of intervals $((\hat{\alpha}_n(1),\hat{\alpha}_n(2)))_{n\in \mathbb{N}}$ such that for all $n\in \mathbb{N}$, we have $\hat{\alpha}_{n+1}(1)>\hat{\alpha}_n(2)$, and $P$ is strictly convex for all $\alpha\in (\hat{\alpha_n}(1),\hat{\alpha_n}(2))$. As in the first case, we know $\alpha\mapsto \mu_{\alpha \ph}$ is injective for all $\alpha\in \bigcup\limits_{n\in \mathbb{N}}(\hat{\alpha}_n(1),\hat{\alpha}_n(2))$. Then, by applying Lemma \ref{lem:limites} and arguing by contradiction once again, for all sufficiently large $n$, we have $\mu_{\alpha \ph}\in U$ for all $\alpha\in (\hat{\alpha}_n(1),\hat{\alpha}_n(2))$. (4) is then concluded by letting $\alpha_1:=\hat{\alpha}_n(1)$, $\alpha_2:=\hat{\alpha}_n(2)$ with $n$ being large enough.
    
\item If $\alpha \mapsto P(\alpha)$ is in neither of the above two cases, due to $C^1$ smoothness of $P(\alpha)$ there must exist some smallest $\alpha_0\geq 0$ such that $P(\alpha)$ is linear for all $\alpha\geq \alpha_0$. In this case, we see from \eqref{deri} and the variational principle that that $\mu_{\alpha_0\ph}$ is the unique equilibrium state for all $\alpha \ph$ with $\alpha\geq \alpha_0$. In particular, it follows from Proposition \ref{prop:zerotemperature} that $\mu_{\alpha_0\ph}$ is a measure of maximal entropy for $(Y_1,F)$. If $\alpha_0=0$, then $\mu_{\alpha_0\ph}$ would be a measure of maximal entropy for $(X,F)$, a contradiction to $h(Y_1)<h$. So we must have $\alpha_0>0$. Consequently, it follows from our choice of $\alpha_0$ that there must exist some $\delta\in (0,\alpha_0)$ such that the pressure $P(\alpha)$ is strictly convex for all $\alpha\in (\alpha_0-\delta,\alpha_0)$, which implies injectivity of $\alpha\mapsto \mu_{\alpha \ph}$ for all such $\alpha$. Meanwhile, a combination of Proposition \ref{prop:zerotemperature} and Lemma \ref{lem:limites} implies that
    $$
\lim_{\alpha\uparrow \alpha_0}\mu_{\alpha \ph}=\mu_{\alpha_0 \ph} \quad \text{ and } 
    $$
    (4) is then concluded by making $\alpha_1$ sufficiently close to $\alpha_0$ from below, and $\alpha_2:=\alpha_0$.
\end{enumerate}
A summation of all three possible cases from above enables us to conclude (4), and thus the proof of Theorem \ref{thm:groundstates}.

\subsection{Proof of Theorem \ref{thm:pathelogic}}

We will apply the functional tool introduced in $\mathsection 2.4 $, as well as Theorem \ref{thm:groundstates}, to prove Theorem \ref{thm:pathelogic}. 

For any $\ph \in C(X,\RR)\setminus \RRR_H$, there exists $\tilde\mu\in \mathcal{M}_{\max}(\ph)$ with $h_{\tilde\mu}(F)>H$. Moreover, due to Lemma \ref{lem:barycentre support}, $\tilde\mu$ can be assumed ergodic. For $\eps>0$, consider the following open neighborhood of $\tilde\mu$ 
$$
\mathcal{U}:=\Big\{\nu \in \mathcal{M}_{F}(X): \int \ph d\nu > \int \ph d\tilde\mu -\eps^2\Big\}.
$$
Applying Theorem \ref{thm:groundstates}, there exist a Lipschitz continuous function $\psi_0 \in C(X, \RR)$, a family of equilibrium states $\mu_{\alpha \psi_0}$ for the potential $\alpha \psi_0$ and a pair of constants $0< \alpha_1 <\alpha_2 <\infty$ such that the map $\alpha \mapsto \mu_{\alpha \psi_0}$ is injective and continuous from $[\alpha_1, \alpha_2]$ to $\mathcal{U}$ and $h_{\mu_{\alpha \psi_0}}(F)>H$ for all such $\alpha$. 

Using the subscript set $[\alpha_1, \alpha_2]$, we may construct a measure $\hat{\mu}$ on $\mathcal{M}_F^{e}(X)$ by assigning $$
 \hat{m}(\cdot)=m(\{t \in [\alpha_1, \alpha_2]: \mu_{\alpha \psi_0}\in \cdot\}) 
 $$
 where $m$ denotes the normalized Lebesgue measure on $[\alpha_1, \alpha_2]$. This measure is well defined and non-atomic due to injectivity and continuity of the map $\alpha \mapsto \mu_{\alpha \psi_0}$. Put $\mu_0 =\int_{\mathcal{M}_F^e(X)}\nu d\hat{m}(\nu)$. Ergodic decomposition theorem claims that for every invariant measure $\lambda \in \mathcal{M}_F(X)$ there is a unique probability measure $b_\lambda$
 on $\mathcal{M}_F^e(X)$ such that $\lambda =\int_{\mathcal{M}_F^e(X)}\nu d b_\lambda(\nu)$. By the uniqueness, $b_{\mu_0}=\hat{m}$ and $\mu_0$ belongs to $\mathcal{U}$.  As an element of $C(X, \RR)^*$, $\mu_0$ is bounded from above by the convex maximum functional $\Lambda_F$ (see \eqref{max}). By Theorem \ref{thm:tangentapproximation}, there exist $\psi \in C(X, \RR)$ and $\mu \in C(X, \RR)^*$ such that $\mu$ is tangent to $\Lambda_F$ at $\psi$ and 
    $$
   \|\mu-\mu_0\| \le \eps \quad \mbox{and} \quad  \|\psi-\ph\|_{C^0} \le \frac{1}{\eps}(\Lambda_F(\ph)-\mu_0(\ph)) \le \eps.
   $$
 According to Lemma \ref{lem:tangentmeasure}, $\mu $ maximizes $\psi$. It suffices to show the support of $b_\mu$, $\text{supp}(b_\mu)$, contains uncountably many ergodic measures. 
 
Notice that $\mathcal{M}_F^e(X)$ as a subspace of $\mathcal{M}_F(X)$ is metrizable and $b_{\mu_{0}}$ is outer regular. For any $\eps>0$, there exists an open subset $U$ of $\mathcal{M}_F^e(X)$ such that $\supp(b_{\mu}) \subset  U$ and $b_{\mu_0}(U \setminus \supp(b_\mu)) < \eps$.  As $\nu\mapsto b_\nu$ is isometric, $\|b_\mu-b_{\mu_0}\|=\|\mu-\mu_0\| \le \eps$. We have
 $$
 b_{\mu_0}(\text{supp}(b_{\mu})) \ge b_{\mu_0}(U) -\eps \ge b_\mu(U)-2\eps =1-2\eps.
 $$
 Since $b_{\mu_0}$ is non-atomic, it follows that $\supp(b_\mu)$ contains uncountable many elements. 

\section{Examples and applications}\label{Examples and applications}

\subsection{Geodesic flows}
Let $M$ be a smooth closed $n$-dimensional manifold, equipped with a smooth Riemannian metric $g$. Denote by $\bar \pi: SM\to M$ the unit tangent bundle over $M$. For each $v\in SM$,
there exists a unique geodesic, denoted by $\gamma_{v}: \RR\to M$, satisfying the initial condition $\dot \gamma_v(0)=v$. The geodesic flow $(g^{t})_{t\in\mathbb{R}}$ on $SM$ is then defined as:
\[
g^{t}: SM \rightarrow SM, \quad v \mapsto \dot \gamma_{v}(t),\ \ \ \ \forall\ t\in \RR .
\]

A \emph{Jacobi field} along a geodesic $\gamma:\RR\to M$ is a vector field $J(t)$ which satisfies the following \emph{Jacobi equation}
\[J''+R(J, \dot \gamma)\dot \gamma=0,\]
where $R$ is the Riemannian curvature tensor and\ $'$\ means the covariant derivative along $\gamma$.
A Jacobi field $J(t)$ along a geodesic $\gamma$ is called \emph{parallel} if $J'(t)=0$ for all $t\in \RR$. 
\begin{definition}
For each $v \in SM$, the rank of $v$, denoted by \text{rank}($v$), is defined to be the dimension of the vector space of parallel Jacobi fields along the geodesic $\gamma_{v}$. Then the rank of $M$ is defined as 
\[\text{rank}(M):=\min\{\text{rank}(v): v \in SM\}.\] 
For a geodesic $\gamma$ we define $\text{rank}(\gamma):=\text{rank}(\dot \gamma(t))$ for some $t\in \mathbb{R}$ (and hence for all $t\in \mathbb{R}$).
\end{definition}

We always assume that $(M,g)$ is a closed rank one Riemannian manifold of nonpositive (sectional) curvature everywhere. Then $SM$ splits into two subsets invariant under the geodesic flow: the regular set $\text{Reg}:= \{v\in SM: \text{rank}(v)=1\}$, and the singular set $\text{Sing}:= SM \setminus \text{Reg}$. Every ergodic measure supported on $\text{Reg}$ is a hyperbolic measure and hence exhibits nonuniform hyperbolicity (cf. \cite[Corollary 3.7]{BCFT}). See \cite[Chapter 12]{BP} and \cite[Section 2.4]{BCFT} for more details on the ergodic theory of geodesic flows in nonpositive curvature.

In particular, we have two $g^t$-invariant subbundles $E^s$ and $E^u$ of $TSM$, which are integrable into $g^t$-invariant foliations $W_g^s$ and $W_g^u$ respectively. For $v\in SM$, we call $W_g^{s/u}(v)$ the \emph{stable/unstable manifolds} of the geodesic flow through $v$. The projections $H^{s/u}(v)=\bar\pi W_g^{s/u}(v)$ are called the \emph{stable/unstable horospheres} associated to $v$. Denote by $\UUU_v^{s/u}: T_{\bar\pi v}H^{s/u}\to  T_{\bar\pi v}H^{s/u}$ the symmetric linear operator associated to the stable/unstable horosphere $H^{s/u}$. Let $\lambda^u(v)$ be the minimal eigenvalue of $\UUU_v^u$ and let $\lambda^s(v):=-\lambda^u(-v)$. Then we define $\lambda(v):=\min \{\lambda^u(v), \lambda^s(v)\}.$ It is clear that $\lambda: SM\to \RR$ is a continuous function. 

Let $\eta>0$. Following \cite{BCFT}, we define 
\begin{equation*}
    \begin{aligned}
\GGG(\eta)&:=\{(v,t): \int_0^\tau \lambda(g^sv)ds \ge \eta\tau, \int_0^\tau \lambda(g^{-s}g^tv)ds \ge \eta\tau, \forall \tau \in [0,t]\},\\
\BBB(\eta)&:=\{(v,t): \int_0^t \lambda(g^sv)ds < \eta t\}.
    \end{aligned}
\end{equation*}
\begin{definition}(\cite[p. 1221]{BCFT})\label{decom}
Given $(v,t)\in SM\times\RR^+$, take $p=p(v,t)$ to be the largest time such that $(v,p)\in \BBB(\eta)$, and $s=s(v,t)$ to be the largest time in $[0, t-p]$ such that $(g^{t-s}v, s)\in \BBB(\eta)$. Then it follows that $(g^pv, l)\in \GGG(\eta)$ where $l=t-p-s$. Thus the triple $(\BBB(\eta), \GGG(\eta), \BBB(\eta))$ equipped with the functions $(p,l,s)$ determines a decomposition for $SM\times \RR^+$.
\end{definition}

Using the above $(\PPP,\GGG,\SSS)$-decomposition for the geodesic flow and the Climenhaga-Thompson criterion Theorem \ref{thm:CT}, Burns, Climenhaga, Fisher and Thompson \cite{BCFT} proved the uniqueness of equilibrium states for H\"older continuous potentials satisfying certain gap properties.
\begin{theorem}(Cf. \cite[Theorem A]{BCFT})\label{bcftthm}
Let $g^t$ be the geodesic flow over a closed rank one manifold $M$ and let $\varphi :SM\to \RR$ be H\"{o}lder continuous.
If $P(\text{Sing},\varphi) <P(\varphi)$, then $\varphi$ has a unique equilibrium state $\mu$. This equilibrium state is hyperbolic, fully supported, and is the weak$^{*}$ limit of weighted regular closed geodesics.
\end{theorem}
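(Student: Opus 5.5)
The plan is to verify the hypotheses of Theorem \ref{thm:CT} (Climenhaga--Thompson) for the potential $\varphi$ and the decomposition $(\BBB(\eta),\GGG(\eta),\BBB(\eta))$ of Definition \ref{decom}, where $\eta>0$ is chosen small at the end. Hyperbolicity, full support and equidistribution of weighted closed geodesics will be derived afterwards from the construction.

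\textbf{Obstruction to expansiveness.} I will show that every ergodic $\mu$ with $\mu(\text{NE}(\eps))=1$ is supported on $\text{Sing}$. Take $v\in\text{Reg}$ and $w\in\Gamma_\eps(v)$. Then the lifts of $\gamma_v$ and $\gamma_w$ to the universal cover stay at bounded distance for all time. The flat strip theorem then forces them to bound a flat strip, which produces a parallel Jacobi field along $\gamma_v$ transverse to $\dot\gamma_v$. This contradicts rank one unless $w$ lies on the orbit of $v$. Hence $\text{NE}(\eps)\subset\text{Sing}$, and so $P^\perp_{\exp}(\varphi)\le P(\text{Sing},\varphi)<P(\varphi)$.

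\textbf{Specification on $\GGG(\eta)$.} The $\GGG(\eta)$ condition gives $\int_0^\tau\lambda\ge\eta\tau$ in both time directions. From this I will get uniform exponential contraction and expansion of the horospheres along the segment, which yields a uniform local product structure at some scale $\delta(\eta)$ at the endpoints. These endpoints lie in the compact set $\{\lambda\ge\eta\}\subset\text{Reg}$. I then combine this with topological transitivity, in fact mixing, of the geodesic flow. Connecting any two points of this compact set by orbits of bounded transition time, and gluing via the product structure, gives weak specification at every scale. The Bowen property of Hölder $\varphi$ on $\GGG(\eta)$ follows from the same uniform contraction: points that are $d_t$-close along a $\GGG(\eta)$ segment are exponentially close away from the two ends. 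The Hölder increments therefore sum to a constant independent of $t$.

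\textbf{Pressure gap.} I will show that $\limsup_{\eta\to0}P([\BBB(\eta)],\varphi)\le P(\text{Sing},\varphi)$. Build measures from maximizing separated sets of $[\BBB(\eta)]$ in the usual variational-principle way. Any limit as $t\to\infty$ is an invariant measure $\nu_\eta$ with $\int\lambda\,d\nu_\eta\le\eta$ and $h_{\nu_\eta}+\int\varphi\,d\nu_\eta\ge P([\BBB(\eta)],\varphi)$. The geodesic flow in nonpositive curvature is entropy expansive, so the entropy map is upper semicontinuous. Letting $\eta\to0$, a limit $\nu$ has $\int\lambda\,d\nu=0$ and hence, since $\lambda\ge0$, is carried by $\{\lambda=0\}$. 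I will argue that its ergodic components live on $\text{Sing}$, using the fact that ergodic measures on $\text{Reg}$ are hyperbolic, which forces $\int\lambda>0$. This gives the bound, and taking $\eta$ small yields hypothesis (3). Theorem \ref{thm:CT} then gives uniqueness.

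\textbf{Remaining properties and main obstacle.} The equilibrium state gives full mass to $\text{Reg}$, because $P(\text{Sing},\varphi)<P(\varphi)$; it is therefore hyperbolic. Full support comes from the lower Gibbs bound on Bowen balls around $\GGG(\eta)$ segments together with density of regular orbits. Equidistribution of weighted closed geodesics comes from a closing lemma for $\GGG(\eta)$ segments together with the Gibbs estimates. I expect the pressure gap to be the main obstacle. The delicate point is passing from small $\int\lambda$ to support on $\text{Sing}$, since $\lambda$ can vanish on regular vectors. Specification at all scales is the second hardest step, since it needs uniform product structure only from averaged $\lambda$-bounds.
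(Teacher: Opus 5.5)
Your proposal is correct and follows the same route as the proof the paper relies on: BCFT verify the Climenhaga--Thompson criterion (Theorem \ref{thm:CT}) for the decomposition $(\BBB(\eta),\GGG(\eta),\BBB(\eta))$, using $\text{NE}(\eps)\subset\text{Sing}$ via flat strips, specification and the Bowen property from the averaged $\lambda$-bounds and uniform product structure on $\{\lambda\ge\eta\}$, and the pressure gap from limits of measures with $\int\lambda\,d\nu_\eta\le\eta$ plus entropy expansiveness. One small correction: in the Bowen step, $d_t$-close points along a $\GGG(\eta)$ segment are exponentially close only after a bounded time shift along the flow, which adds an $O(\|\varphi\|_{C^0})$ term and leaves the conclusion intact.
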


We are ready to prove the genericity results for maximal measures of the geodesic flow.
\begin{proof}[Proof of Theorem \ref{thm:bcft}]
Let us verify the conditions of Theorem \ref{thm:main} for geodeisc flows on rank one manifolds of nonpositive curvature.
\begin{itemize}
    \item By \cite[Theorem 6.1]{Kn2} and \cite[Theorem B]{BCFT}, we have $h_{\text{top}}(\text{Sing})<h_{\text{top}}(G)=:h$. 
    \item By \cite[Proposition 5.4]{BCFT}, $h^\perp_{\exp}\le h_{\text{top}}(\text{Sing})$ and hence $h^\perp_{\exp}<h$. 
    \item By \cite[Proposition 5.2]{BCFT}, for $\eta>0$ small enough, $h([\PPP]\cup[\SSS])=h([\BBB(\eta)])<h$.
\end{itemize} 
Thus Theorem \ref{thm:main} applies and Theorem \ref{thm:bcft}(1) is proved.

To prove Theorem \ref{thm:main}(2)-(5), we need to show that every Lipschitz function $\psi$ satisfies Bowen property along $\GGG$. But it is just an immediate consequence of \cite[Corollary 7.5]{BCFT}. Note that our potential $\psi:=\alpha \ph$ is a $\alpha$-multiple ($\alpha>0$) of distance function $\ph$ defined as follows
$$\ph(v):=-d(v,Y_1), \quad \forall v\in SM,$$
where $Y_1$ is a compact flow-invariant subset of $SM$ with entropy $h(Y_1)\in (h(\text{Sing}), h)$ (see \eqref{eq:Y1def} and Proposition \ref{prop:entropyofY1}). Thus Theorems \ref{thm:pathelogic} and  \ref{thm:groundstates} apply. It remains to show that the ergodic measure in \ref{thm:bcft}(2) are all fully supported. This follows from Theorem \ref{bcftthm}, that is, the unique equilibrium state for $\psi$ above is fully supported. The proof of Theorem \ref{thm:bcft} is complete.
\end{proof}

\subsection{Frame flows}
Let $(M,g)$ be a closed, oriented, $C^{\infty}$ and $n$-dimensional Riemannian manifold. The \emph{frame bundle} over $M$ is defined as
\begin{equation*}
\begin{aligned}
FM:=\{&(x, v_0, v_1, \cdots, v_{n-1}): x\in M, v_i\in S_xM, \\
&\{v_0, \cdots, v_{n-1}\} \text{\ is a positively oriented orthonormal frame at\ } x\}.
\end{aligned}
\end{equation*}
The \emph{frame flow} $F^t:FM\to FM$ is defined by
$$F^t(x, v_0, v_1, \cdots, v_{n-1}):= (\gamma_{v_0}(t), g^t(v_0), P_\gamma^t(v_1),\cdots, P_\gamma^t(v_{n-1}))$$
where $P_\gamma^t$ is the parallel transport along the geodesic $\gamma_{v_0}$.
There is a natural fiber bundle $\pi: FM\to SM$ that takes a frame to its first vector, and then each fiber can be identified with $SO(n-1)$. Clearly, $\pi \circ F^t=g^t\circ \pi$ and $F^t$ acts isometrically along the fibers.

The ergodicity of frame flows on closed manifolds of negative curvature has been extensively studied since 1970s by Brin, Gromov \cite{Brin75, Brin80, BG} etc. In this case, the frame flow $F^t$ is a partially hyperbolic flow, and the uniqueness of equilibrium states has been studied by Spatzier and Visscher, see \cite[Theorem 1]{SV}.

We consider frame flows on closed rank one manifolds of nonpositive curvature, which can be thought as systems with mixture of partiall hyperbolicity and nonuniform hyperbolicity. We say $M$ is \emph{bunched} if there exists a constant $C>1$ such that for every $x\in M$ and $\{v_i\}_{i=1}^4\subset S_xM$, we have 
    $$
K(v_1,v_2)\in [C^{-1}K(v_3,v_4),CK(v_3,v_4)]
    $$
    whenever the sectional curvature $K$ is defined.

\begin{definition}\label{decom1}
Given $(x,t)\in FM\times \RR^+$, then $(\pi(x), t)\in  SM\times \RR^+$ has a $(\PPP, \GGG, \SSS)$-decomposition according to Definition \ref{decom}. We define  $(\tilde \PPP, \tilde\GGG, \tilde\SSS)$-decomposition of $(x,t)$ as the unique lift of  $(\PPP, \GGG, \SSS)$-decomposition of $(\pi(x), t)$.
\end{definition}

Using the above $(\tilde \PPP, \tilde\GGG, \tilde\SSS)$-decomposition for frame flows and Theorem \ref{thm:weprove}, we have
 \begin{theorem}(\cite[Theorem E]{WangWu})\label{esframeflow}
Let $M$ be a closed, oriented rank one $n$-manifold with bunched nonpositive curvature. Suppose that the frame flow $F^t:FM\to FM$ is topologically transitive, and $\tilde\varphi: FM\to \mathbb{R}$ is a continuous potential that is constant on the fibers of the bundle $FM\to SM$, whose projection $\varphi :SM\to \RR$ is H\"{o}lder continuous. Moreover, assume that $P(g^t, \text{Sing},\varphi) <P(g^t, \varphi)$. Then there is a unique equilibrium measure for $(F^t, \tilde\varphi)$. 
\end{theorem}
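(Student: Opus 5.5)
This statement is cited from \cite[Theorem E]{WangWu}, and I would prove it as an application of Theorem \ref{thm:weprove} to $(FM,F^t,\tilde\varphi)$ with the lifted decomposition $(\tilde\PPP,\tilde\GGG,\tilde\SSS)$ of Definition \ref{decom1}, taking $\mathcal{D}=FM\times\RR^+$. The guiding principle is that $F^t$ is an isometric $SO(n-1)$-extension of $g^t$. Hence every dynamical quantity that only involves $\tilde\varphi$ can be computed downstairs. The only genuinely new inputs concern the fiber direction, which enters through expansiveness and specification.

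I would verify the hypotheses in the following order.

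First, \emph{entropy expansiveness of $F^t$ at some scale $\eps$}. Let $x\in FM$. Then $\Gamma_\eps(x)$ projects into $\Gamma_\eps(\pi x)$ for $g^t$, and the fiber contributes no entropy because $F^t$ acts isometrically on fibers. For $g^t$ I would use that $\Gamma_\eps(v)$ lies in a flat strip, on which the geodesic flow has zero entropy. These two facts give $h(\Gamma_\eps(x))=0$. By \cite[Proposition 2.7]{WangWu}, this yields both $P(\tilde\varphi,\eta)=P(\tilde\varphi)$ for $\eta<\eps/2$ and the partition condition $h_\mu(f_t,\mathcal{A}_t)=h_\mu(f_t)$.

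Second, the \emph{pressure gap}. Because $\tilde\varphi=\varphi\circ\pi$ and $\pi$ has compact isometric fibers, a Ledrappier--Walters/Bowen type argument should give $P(F^t,\tilde\varphi)=P(g^t,\varphi)$, and similarly for collections of orbit segments that are lifts. Then $P([\tilde\PPP]\cup[\tilde\SSS],\tilde\varphi)=P([\BBB(\eta)],\varphi)$. The latter is $<P(\varphi)$ for small $\eta$, by \cite{BCFT} combined with the hypothesis $P(\text{Sing},\varphi)<P(\varphi)$.

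Third, \emph{distortion}. The Bowen property of $\varphi$ on $\GGG(\eta)$ \cite[Corollary 7.5]{BCFT} lifts directly to $\tilde\GGG$, since $F$-Bowen balls project into $g$-Bowen balls. So $g$ can be taken bounded.

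The main obstacle is \emph{weak controlled specification for $\tilde\GGG$ at every scale with a gap function $h_\eta(t)=o(\log t)$}. Downstairs, $\GGG(\eta)$ has specification with bounded gaps, and I would use it to shadow the projected segments. The difficulty is that one must also match the fiber (frame) coordinates. For this I would use transitivity of $F^t$ together with a Brin-type argument. Concretely, the transitivity group generated by stable/unstable holonomies and closed loops equals $SO(n-1)$. I would use bunching to make the stable and unstable holonomies along good segments Hölder, with uniform control on $\tilde\GGG$. Each fiber correction is then realized by concatenating a uniformly bounded number of such holonomy paths plus a short bridging orbit. A fiber error of size $\eta$ should cost an extra transition time that grows at most slowly, say polylogarithmically in $\log t$, via uniform equicontinuity of holonomies along $\tilde\GGG$-segments. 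This would give $\liminf h_\eta(t)/\log t=0$.

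With all conditions verified, Theorem \ref{thm:weprove} yields the unique equilibrium state.
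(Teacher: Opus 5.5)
Your overall architecture matches the paper's. You apply Theorem \ref{thm:weprove} to $(FM,F^t,\tilde\varphi)$ with the lifted decomposition $(\tilde\PPP,\tilde\GGG,\tilde\SSS)$. You get the fixed-scale and partition conditions from entropy expansiveness of the frame flow. You transfer the pressure gap through the Bowen-inequality lemma (Lemma \ref{liftpressure}, i.e.\ \cite[Lemma 4.28]{WangWu}). You lift the Bowen property of $\varphi$ on $\GGG(\eta)$ from \cite[Corollary 7.5]{BCFT}. Those steps are fine.

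The gap is the step you yourself call the main obstacle: specification for $\tilde\GGG$ is never actually established. You assert that topological transitivity of $F^t$ gives a Brin-type transitivity group equal to $SO(n-1)$, with $su$-holonomies of frames defined and uniformly equicontinuous along $\tilde\GGG$-segments. Brin's theory is built for compact group extensions of uniformly hyperbolic flows. In rank one nonpositive curvature, frame holonomies along horospheres need not be well defined or continuous near $\text{Sing}$, since asymptotic geodesics need not converge near flats. That is exactly why bunching is imposed, and your sketch derives none of this. Moreover, your quantitative conclusion does not follow from the mechanism you describe. If each fiber correction uses a uniformly bounded number of holonomy moves plus a short bridging orbit, the cost is a \emph{bounded} gap. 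You give no reason for a transition time growing like a power of $\log\log t$, so $\liminf_{t\to\infty} h_\eta(t)/\log t=0$ is a guess, not a consequence.

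The paper instead relies on \cite[Theorem 4.27]{WangWu}: $\tilde\GGG$ has weak specification at every scale with a constant gap, as invoked in the proof of Theorem \ref{thm:frame}. With this and the bounded distortion $g$, conditions (1) and (3) of Theorem \ref{thm:weprove} hold trivially. To close your argument, either cite that result or give a genuine proof of bounded-gap (or at least $o(\log t)$-gap) specification for the lifted good segments. Such a proof requires building fiber-correcting paths in this non-uniform setting with uniform control on their length along $\tilde\GGG$. This is the missing idea, not a routine verification.
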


The following is a version of Bowen's inequality \cite[Theorem 17]{Bow71} for pressure of orbit segments of the frame flow. It is essentially due to the fact that the flow is isometric along fibers, and hence there is no contribution to pressure in the fiber direction.
\begin{lemma}(\cite[Lemma 4.28]{WangWu})\label{liftpressure}
For $\tilde{\mathcal{C}}\subset FM\times \RR^+$, denote $\mathcal{C}:=\{(\pi x, t): (x,t)\in \tilde{\mathcal{C}}\}\subset SM\times \RR^+$. Then for any continuous potential $\tilde\varphi: FM\to \mathbb{R}$ that is constant on the fibers of the bundle $\pi: FM\to SM$, we have 
$$P(F^t, \tilde{\mathcal{C}},\tilde\varphi)=P(g^t,\mathcal{C},\varphi).$$
\end{lemma}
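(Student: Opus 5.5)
The plan is to prove the two inequalities separately. Fix a Riemannian metric on $FM$ for which $\pi:FM\to SM$ is a Riemannian submersion. Then $d(\pi x,\pi y)\le d(x,y)$, and hence $d^{g}_t(\pi x,\pi y)\le d^F_t(x,y)$ for all $t$. Write $\tilde\Phi(x,t)=\int_0^t\tilde\varphi(F^sx)\,ds$. Since $\tilde\varphi=\varphi\circ\pi$ and $\pi\circ F^s=g^s\circ\pi$, we have $\tilde\Phi(x,t)=\Phi(\pi x,t)$. Both pressures are defined through partition functions over $(t,\eps)$-separated sets, so everything reduces to comparing such sets upstairs and downstairs.

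\textbf{Lower bound.} Let $E\subset\mathcal C_t$ be $(t,\eps)$-separated for $g^t$. For each $v\in E$ choose one lift $x_v\in\tilde{\mathcal C}_t$ with $\pi x_v=v$; this is possible by the definition of $\mathcal C$. For $v\ne w$ the domination of Bowen metrics gives $d^F_t(x_v,x_w)\ge d^g_t(v,w)>\eps$, so the lifts are $(t,\eps)$-separated. They also carry the same weights $e^{\tilde\Phi(x_v,t)}=e^{\Phi(v,t)}$. Hence $\Lambda_t(\tilde{\mathcal C},\tilde\varphi,\eps)\ge\Lambda_t(\mathcal C,\varphi,\eps)$. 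Taking $\limsup_{t}$ and then $\limsup_{\eps\to0}$ gives $P(F^t,\tilde{\mathcal C},\tilde\varphi)\ge P(g^t,\mathcal C,\varphi)$.

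\textbf{Upper bound.} This follows Bowen's inequality $h(F)\le h(g)+\sup_v h(F,\pi^{-1}v)$. The fiber term vanishes because $F^t$ moves fibers isometrically; concretely, $F^t:\pi^{-1}(v)\to\pi^{-1}(g^tv)$ is an isometry between copies of $SO(n-1)$ via parallel transport. The key intermediate estimate is the following uniform covering claim:
\begin{quote}
for every $\eps>0$ and $\gamma>0$ there exist $\delta\in(0,\eps)$ and $C>0$ such that, for every $v\in SM$ and $t>0$, the set $\pi^{-1}(B^g_t(v,\delta))$ is covered by at most $Ce^{\gamma t}$ Bowen balls $B^F_t(\cdot,\eps/2)$.
\end{quote}
Granting this, the argument runs as follows. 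Let $\tilde E\subset\tilde{\mathcal C}_t$ be $(t,\eps)$-separated, and let $E'$ be a maximal $(t,\delta)$-separated subset of $\pi\tilde E\subset\mathcal C_t$. Every $x\in\tilde E$ then lies in $\pi^{-1}(\overline{B^g_t(v,\delta)})$ for some $v\in E'$. Each such preimage contains at most $Ce^{\gamma t}$ points of $\tilde E$, since each covering ball holds at most one point of an $\eps$-separated set. Moreover, $|\Phi(\pi x,t)-\Phi(v,t)|\le t\,\mathrm{Var}(\varphi,\delta)$, where $\mathrm{Var}(\varphi,\delta)$ is the oscillation of $\varphi$ over pairs of points at distance at most $\delta$. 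Summing over $v\in E'$ yields
\[
\Lambda_t(\tilde{\mathcal C},\tilde\varphi,\eps)\le Ce^{t(\gamma+\mathrm{Var}(\varphi,\delta))}\Lambda_t(\mathcal C,\varphi,\delta),
\]
and therefore $P(\tilde{\mathcal C},\tilde\varphi,\eps)\le P(\mathcal C,\varphi,\delta)+\gamma+\mathrm{Var}(\varphi,\delta)$. Let $\eps\to0$, choose $\delta\to0$ accordingly, and then let $\gamma\to0$. Since $P(\mathcal C,\varphi,\delta)\le P(\mathcal C,\varphi)+o(1)$ as $\delta\to0$ and $\mathrm{Var}(\varphi,\delta)\to0$, this gives $P(F^t,\tilde{\mathcal C},\tilde\varphi)\le P(g^t,\mathcal C,\varphi)$.

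\textbf{Proving the covering claim (main obstacle).} Over a base orbit segment that is only $\delta$-shadowed, the fiber coordinates of two frames can drift relative to each other through the holonomy of parallel transport along nearby but distinct geodesics. So the claim is not literally the isometry of a single fiber. I plan to proceed by unit-time steps, as in Bowen's proof.
\begin{enumerate}
\item By uniform continuity on the compact space $FM$, choose $\delta$ so small that, for frames $x,y$ with $d(\pi x,\pi y)<\delta$, the fiber discrepancy of $F^sx$ and $F^sy$ for $s\in[0,1]$ grows by at most a fixed small amount $\rho$, say $\rho\ll\eps$, in a local trivialization.
\item Cover the fiber $SO(n-1)$ by a fixed number $N(\rho)$ of $\rho$-balls.
\item Track the fiber coordinate every $L$ units of time, where $L$ is a block length chosen so that $\log N(\rho)/L<\gamma$. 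Refining the cover once per block, rather than once per unit time, produces at most $N(\rho)^{t/L+1}$ itineraries.
\item Check that the accumulated drift within a block stays below $\eps/2$, using $\delta$ small depending on $L$; the isometric action is what prevents growth of discrepancy beyond additive drift.
\end{enumerate}
This gives the claim with $C=N(\rho)$, since $N(\rho)^{t/L+1}\le N(\rho)\,e^{\gamma t}$. Step 4 is the delicate part, and I would verify it carefully.
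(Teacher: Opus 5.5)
Your proof is correct and follows the argument the paper points to. The paper gives no proof of its own; it cites \cite[Lemma 4.28]{WangWu} as a version of Bowen's inequality \cite[Theorem 17]{Bow71} for orbit-segment pressure, in which the fiber contributes nothing because $F^t$ is isometric on fibers. That is exactly your structure: a lower bound from lifting separated sets, and an upper bound from a subexponential covering of $\pi^{-1}(B^g_t(v,\delta))$. Your choice of a submersion metric is harmless, since $P(\mathcal{C},\varphi)$ is a limit as the scale tends to $0$ and so does not depend on the compatible metric.

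The only divergence is how you prove the covering claim. Bowen gets it from compactness of $SM$: take finitely many open $W_y$ whose preimages are covered by few $F$-Bowen balls of length $m(y)$, then a Lebesgue number and concatenation. This needs only that each single fiber has zero entropy, which is immediate from the fiberwise isometry, so your Step~4 becomes unnecessary. Your holonomy-drift route also works, with two provisos. The identification of nearby fibers must be $SO(n-1)$-equivariant and the fiber metric bi-invariant, so that the drift is additive. And the per-unit-time drift must be fixed $\ll \eps/L$ after $L$ is chosen, rather than equal to the covering radius $\rho$.
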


At last, we complete the proof of Theorem \ref{thm:frame}.
\begin{proof}[Proof of Theorem \ref{thm:frame}]
To prove Theorem \ref{thm:frame}(1), let us verify the conditions of Theorem \ref{thm:main} for frame flows. Note that the frame flow is entropy expansive (cf. \cite[Proof of Theorem F]{WangWu}). By Lemma \ref{liftpressure}, we have
$$H^{\perp}:=h([\tilde\PPP]\cup[\tilde\SSS])=h([\PPP]\cup[\SSS])<h_{\text{top}}(G)=h_{\text{top}}(F)=:h.$$
Thus Theorem \ref{thm:main} applies and Theorem \ref{thm:frame}(1) is proved.

For Theorem \ref{thm:frame}(2)-(5), we cannot apply Theorems \ref{thm:pathelogic} and \ref{thm:groundstates} directly due to the failure of $h^{\perp}_{\exp}<h$. In fact, one can easily observe that $\Gamma_{\eps}(x)\nsubseteq f_{[-s,s]}(x)$ for any $\eps,s>0$ and $x\in X$, causing $\text{NE}(\eps)=X$ for all $\eps>0$. Consequently, $h^{\perp}_{\exp}=h$. Nevertheless, we can mimic their proofs to get Theorem \ref{thm:frame}(2)-(5). 
Let $Y_1\subset SM$ be the set built in the case of $(SM,(g^t)_{t\in \mathbb{R}})$ as in \eqref{eq:Y1def}, which has appeared in the proof of Theorem \ref{thm:bcft}. 
As before, let $\varphi:SM\mapsto \mathbb{R}$ be given by 
$$\varphi(v):=-d(v,Y_1), \quad \forall v\in SM.$$
Then we define the function $\tilde \ph: FM\to \RR$ as
$$\widetilde{\varphi}(x):=\varphi(\pi(x)), \quad \forall x\in FM.$$
Now we have the following properties:
\begin{itemize}
    \item $\widetilde{\mathcal{G}}$ satisfies weak specification, by \cite[Theorem 4.27]{WangWu}.
    \item $\widetilde{\varphi}$ satisfies Bowen property over $\widetilde{\mathcal{G}}$, since $\varphi$ satisfies such property over $\mathcal{G}$, and $\widetilde{\varphi}$ is a constant on each fiber.
    \item For each $\alpha>0$, we have $P([\tilde\PPP]\cup[\tilde\SSS],\alpha\widetilde{\ph})\leq h([\tilde\PPP]\cup[\tilde\SSS])=h([\PPP]\cup[\SSS])<h(Y_1)=h(\widetilde{Y_1})\leq P(\alpha\widetilde{\ph})$, where $\widetilde{Y_1}:=\pi^{-1}(Y_1)$.
\end{itemize}
Then by Theorem \ref{esframeflow}, there exists a unique equilibrium state $\mu_{\alpha\widetilde{\ph}}$ for each $\alpha\ge 0$. This allows us to mimic the proof of Theorem \ref{thm:groundstates} and obtain all desired results aside from measures from Theorem \ref{thm:frame} (2) being fully supported. 

It remains to prove that these measures are fully supported. Since such measures are all in the form of $\mu_{\alpha\widetilde{\ph}}$ with $\alpha>0$, it suffices to show all such equilibrium states are fully supported. The bulk of the proof lies in showing such measures all satisfy lower Gibbs property along long orbit segments from $\widetilde{\mathcal{G}}$ at sufficiently small scale, i.e. there is some constant $\eps_0>0$ such that for all $\alpha>0$ , there is a constant $T_{\alpha}>0$ such that the following holds for all $(x,t)\in \widetilde{\mathcal{G}}$ with $t>T_{\alpha}$: for any $\eps'\in (0,\eps_0)$, there is $C_{\alpha,\eps'}>0$ such that
\begin{equation} \label{eq:lowergibbs}
    \mu_{\alpha\widetilde{\ph}}(B_t(x,\eps'))>C_{\alpha,\eps'}e^{-tP(\alpha\widetilde{\ph})+\int_0^t\alpha\widetilde{\ph}(F^s(x))ds}.
\end{equation}
To prove \eqref{eq:lowergibbs}, we first notice from \cite[(4.26)]{WangWu} that 
\begin{equation} \label{eq:Ftscale}
    P(F^t,\alpha\widetilde{\ph},\eps')=P(F^t,\alpha\widetilde{\ph}) \text{ for all }\alpha>0, \\ \eps'\in (0,\eps_0)
\end{equation}
where $\eps_0:=\text{inj}(M)/6$. Then we can directly apply \eqref{eq:Ftscale} as well as the above three conditions (on weak specification, Bowen property and pressure gap condition respectively) over $(F^t,\alpha\widetilde{\ph},\tilde\PPP,\tilde\GGG,\tilde\SSS)$ to run the arguments leading towards \cite[Lemma 4.16]{CT16}. Precisely, we obtain for each $\alpha\geq 0$ an equilibrium state $\nu_{\alpha}$ for $\alpha\widetilde{\ph}$ satisfying the inequality suggested in \eqref{eq:lowergibbs}. Since we have known the uniqueness of equilibrium state, such $\nu_{\alpha}$ must be equal to $\mu_{\alpha\widetilde{\ph}}$.\footnote{In their original work of \cite{CT16}, in order to prove Lemma 4.16, the only expansiveness type of condition Climenhaga and Thompson need is Proposition 3.7, which helps them fix a scale on which all the arguments are based on. Our \eqref{eq:Ftscale} serves as an analog of that proposition and plays an identical role. Indeed, the full power on the condition on obstruction of expansiveness will not be revealed till \cite[$\mathsection 4.6$]{CT16}, where they start to implement the proof for uniqueness of equilibrium state.} Once we have \eqref{eq:lowergibbs}, we can repeat the arguments in \cite[Lemma 6.2 and Proposition 6.3]{BCFT} and conclude that all $\mu_{\alpha\widetilde{\ph}}$ must be fully supported for all $\alpha>0$. Details are omitted and left to interested readers.
\end{proof}

\section{Appendix: Proofs of some technical results for flows}

We prove Lemma \ref{lem:tangentmeasure}, Proposition \ref{thm:con} and Proposition \ref{thm:morris}, by adapting the proofs of \cite[Lemma 2.3]{Bre08}, \cite[Theorem 1.1]{Con} and \cite[Theorem 1.1]{Morris2} from the homeomorphism case to the flow case.

\begin{proof}[Proof of Lemma \ref{lem:tangentmeasure}]
The ``if'' direction is clear by the fact $\mu\in \mathcal{M}_F(X)$ and observing that $\Lambda_F(\ph)=\mu(\ph)$ and
$$\mu(\psi)=\mu(\ph+\psi)-\mu(\ph)\le \Lambda_F(\ph+\psi)-\Lambda_F(\ph).$$ 
So we only need prove the ``only if'' direction. By definition of tangency to $\Lambda_F$, for any $\psi\in C(X,\mathbb{R})$, we have
\begin{equation} \label{eq:deftangency}
    \mu(\psi)\leq \Lambda_F(\ph+\psi)-\Lambda_F(\ph).
\end{equation}
Replacing $\psi$ by $-\psi$ in \eqref{eq:deftangency}, we have
\begin{equation} \label{eq:deftangency-}
    \mu(\psi)\geq \Lambda_F(\ph)-\Lambda_F(\ph-\psi).
\end{equation}
Plugging in $\psi:=1$ in both \eqref{eq:deftangency} and \eqref{eq:deftangency-} immediately gives that $\mu(1)=1$. Meanwhile, plugging $\psi=\ph$ in those two equations indicates that
$$
\mu(\ph)=\Lambda_F(\ph).
$$
Therefore, it remains to show that $\mu$ is flow invariant. For any $t\neq 0$, and $\psi_1\in C(X,\mathbb{R})$, letting $\psi:=\psi_1-\psi_1\circ f_t$, we know $\Lambda_T(\ph+\psi)=\Lambda_T(\ph-\psi)=\Lambda_T(\ph)$. Consequently, a combination of \eqref{eq:deftangency} and \eqref{eq:deftangency-} in this case implies that
$$
\mu(\psi_1-\psi_1\circ f_t)=\mu(\psi)=0,
$$
which concludes that $\mu\in \mathcal{M}_F(X)$ by the arbitrary choice on $t,\psi_1$.
\end{proof}

\begin{proof}[Proof of Proposition \ref{thm:con}]
For compact metric space $X$, $C(X, \RR)$ is separable.  Since $E$ is densely and continuously embedded in $C(X, \RR)$, we may assume there is a countable subset of $E$ which is dense in $C(X, \RR)$ and denote it as $\{g_i\}_{i=1}^{\infty}$. An invariant measure $\mu$ is uniquely determined by how it integrates the family $\{g_i\}_{i=1}^{\infty}$ by Riesz representation theorem. Therefore the set of $\ph$-maximizing measure $\mathcal{M}_{\max}(\ph)$ is a singleton if and only if the closed interval $M_i(\ph)=\{\int g_i d\mu:  \mu \in \mathcal{M}_{\max}(\ph)\}$ is a singleton for every $i\ge 1$. Define 
$$E_{ij}=\{\ph \in E: \|M_i(\ph)\|\ge j^{-1}\}$$
where$ \|M_i(\ph)\| $ denotes the length of the interval $M_i(\ph)$. Then we have $U(E)^c=\cup_{i=1}^{\infty}\cup_{j=1}^{\infty} E_{ij}$ and hence $U(E)=\cap_{i=1}^{\infty}\cap_{j=1}^{\infty} E_{ij}^c$ . It suffices to show $E_{i, j}$ is closed and has empty interior. 

To show $E_{ij}$ is closed, let $\ph_\alpha$ be a convergent net in $E_{ij}$ and $\ph$ be its limit. We can write $M_i(\ph_\alpha)=[\int g_i d\mu_\alpha^-,  \int g_i d\mu_\alpha^+]$ for some $ \mu_\alpha^-,  \mu_\alpha^+ \in \mathcal{M}_{\max}(\ph_\alpha)$. By compactness of $\mathcal{M}_F(X)$, without loss of generality we assume $\mu_\alpha^- \rightarrow \mu^-, \mu_\alpha^+ \rightarrow \mu^+$ in weak* topology.  Since
$$
\int \ph_\alpha d\mu_\alpha^- =\int (\ph_\alpha-\ph) d\mu_\alpha^- +\int \ph d\mu_\alpha^- \rightarrow \int \ph d\mu^{-},
$$ 
and $ \int \ph_\alpha d\mu \rightarrow \int \ph d\mu$ for any $\mu \in \mathcal{M}_F(X)$, $ \int \ph d\mu^{-}\ge \int \ph d\mu $ due to $\mu_\alpha^- \in \mathcal{M}_{\max}(\ph_\alpha)$ and $\mu^-$ is $\ph$-maximizing follows. Similarly $\mu^+$ is $\ph$-maximizing. Moreover, $\int g_i d\mu_\alpha^+ - \int g_i d\mu_\alpha^- \rightarrow \int g_i d\mu^+- \int g_i d\mu^- \ge j^{-1}$ gives $\ph \in E_{ij}$. 

To show $E_{ij}$ has empty interior, we follow the same perturbation argument in \cite{Jen06a}. For any $\ph \in E_{ij}$ and  $\eps > 0$, the set 
$$M_i(\ph +\eps g_i)=\Big\{\int g_i d\mu: \mu \in \mathcal{M}_{\max}(\ph+\eps g_i)\Big\} $$ 
is a closed interval $ [\int g_i d\mu_\eps^-, \int g_i d\mu_\eps^+]$ for some $\mu_\eps^-$ and $ \mu_\eps^+$. Again we may assume $\{\mu_\eps^{\pm}\}_{\eps>0}$ is a convergent net with limit $\mu^{\pm}$,  otherwise we can just pick any convergent sub-net. Since
$$\int \ph+\eps g_i  d\mu_\eps^-  \ge \int \ph+\eps g_i  d\mu$$
for arbitrary $\mu \in \mathcal{M}_F(X)$,  $\int \ph d\mu^- \ge \int \ph d\mu$ as $\eps \rightarrow 0$ and $\mu^-$ is $\ph$-maximizing follows.  For any $\nu \in \mathcal{M}_{\max}(\ph)$, $\int \ph+\eps g_i  d\mu_\eps^-  \ge \int \ph+\eps g_i  d\nu$ implies $\int  g_i  d\mu_\eps^- \ge  \int g_i  d\nu$ and $\int g_i d\mu^- \ge \int g_i d\nu$ by taking the limit.  Therefore $\int g_i d\mu^- =\max_{\mu \in \mathcal{M}_{\max}(\ph)}\int g_id\mu$.  Similarly we can show $\int g_i d\mu^+ =\max_{\mu \in \mathcal{M}_{\max}(\ph)}\int g_id\mu$. 
Hence the set $M_i(\ph +\eps g_i)$  converges to a singleton 
$\{\int g_i d\mu^+\}$ in Hausdorff metric and $\ph+\eps g_i \notin E_{ij}$ for sufficiently small $\eps$.
\end{proof}

We need two lemmas to prove Proposition \ref{thm:morris}. The following lemma says a measure almost maximizing $\ph$ can be realized as a maximizing measure for some $g$ close to $\ph$. The proof of the next lemmas is adaption of that in homeomorphism case, and thus omitted. 
	\begin{lemma}(Cf. \cite[Lemma 2.2]{Morris2})\label{close}
	Let $\ph \in C(X, \RR)$ and $ \eps>0$.  If $\mu \in \mathcal{M}_F^{e}(X)$ satisfies $\Lambda_F(\ph)-\int \ph d\mu<\eps$, then there exists $g\in C(X,\RR)$ such that $\mu \in \mathcal{M}_{\max}(g)$ and $\|\ph-g\|_{C^0}< \eps$.
	\end{lemma}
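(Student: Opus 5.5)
The natural route is the standard Morris-type construction: add to $\ph$ a small nonpositive continuous "penalty" that vanishes exactly on the support of $\mu$. Write $\theta:=\Lambda_F(\ph)-\int\ph\,d\mu\in[0,\eps)$ and $K:=\supp\mu$, a compact $F$-invariant set. We look for $g=\ph+\psi$ with $\|\psi\|_{C^0}<\eps$ and $\mu\in\mathcal M_{\max}(g)$.

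\emph{Step 1: a coboundary correction.} Ergodicity of $\mu$ suggests first shifting $\ph$ by a flow coboundary, which changes no integral against invariant measures. Pick $T>0$ large and set $\ph_T:=\frac1T\int_0^T\ph\circ f_s\,ds$. Since $\ph-\ph_T=\frac{d}{dt}\big|_{t=0}$ of a continuous function along orbits (namely $u=\frac1T\int_0^T(T-s)\ph\circ f_s\,ds$), we have $\int\ph_T\,d\nu=\int\ph\,d\nu$ for all $\nu\in\mathcal M_F(X)$, so $\Lambda_F(\ph_T)=\Lambda_F(\ph)$. This is only a device; the perturbation itself will be written in the form $\ph+(\text{something small})$.

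\emph{Step 2: the penalty.} We want continuous $\psi\le 0$ with $\|\psi\|_{C^0}<\eps$, $\psi\equiv$ const on $K$, and $\int(\ph+\psi)\,d\nu\le\int(\ph+\psi)\,d\mu$ for all $\nu$. The cleanest choice is
\[
\psi:=\min\Big\{0,\ \int\ph\,d\mu-\ph_T^{+}\Big\},
\]
where $\ph_T^{+}$ is a continuous function with $\ph_T^{+}\le\int\ph\,d\mu+\eps'$ on $K$ for some $\eps'<\eps-\theta$. Such a function exists for $T$ large by uniform-on-$K$ control of Birkhoff averages; uniformity on all of $K$ is not automatic, so we instead use only that $\ph_T\to\int\ph\,d\mu$ in $L^1(\mu)$. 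The more robust alternative, which I would adopt, is to take
\[
g:=\ph-\theta\cdot\min\{1, d(x,K)/r\}-\big(\ph_T-\textstyle\int\ph\,d\mu\big)^{+}\wedge(\eps-\theta),
\]
and then verify $\int g\,d\nu\le\int g\,d\mu$ by splitting $\nu$ by its mass near $K$.

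\emph{Step 3: the obstacle.} The main difficulty is that a penalty supported off $K$ does not lower $\int g\,d\nu$ for invariant $\nu$ supported inside $K$ with $\int\ph\,d\nu>\int\ph\,d\mu$. Ergodicity alone does not exclude such $\nu$, since $\mu$ need not be uniquely ergodic on $K$. This is why an extra tool is needed. I would invoke the functional-analytic Theorem \ref{thm:tangentapproximation} (Bishop--Phelps/Israel) with $\mu_0=\mu$, $f_0=\ph$, $\Lambda=\Lambda_F$, and $s=0$. It produces $g$ with $\|g-\ph\|\le\theta/\eps'<\eps$ (choosing $\eps'$ with $\theta<\eps'\eps$) and a tangent functional $\mu'$ with $\|\mu'-\mu\|\le\eps'$. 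By Lemma \ref{lem:tangentmeasure}, $\mu'\in\mathcal M_{\max}(g)$. The remaining task, which I expect to be the hardest, is to upgrade $\mu'$ to $\mu$ itself. I would argue via Lemma \ref{lem:barycentre support} and the isometry of Lemma \ref{lem:barycentre map}. Since $\mu$ is ergodic, $b_\mu=\delta_\mu$, and $\|b_{\mu'}-\delta_\mu\|\le\eps'<2$ forces $b_{\mu'}(\{\mu\})>0$. Hence $\mu\in\supp b_{\mu'}\subset\mathcal M_{\max}(g)$.
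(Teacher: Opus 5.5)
Your Step 3 is correct and is essentially the argument the paper has in mind: it is Morris's proof adapted to flows, which the paper omits, and it uses the same tools as the proof of Theorem \ref{thm:pathelogic}, namely Theorem \ref{thm:tangentapproximation} with $s=0$ and parameter $\varepsilon'\in(\theta/\varepsilon,2)$, then Lemma \ref{lem:tangentmeasure}, then the barycentre isometry of Lemma \ref{lem:barycentre map} forcing $b_{\mu'}(\{\mu\})>0$. Steps 1 and 2 play no role, and as you observe yourself the penalty construction cannot work on its own, so you should simply delete them.
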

Moreover, any ergodic measure is uniquely maximizing for some continuous function due to O. Jenkinson \cite{Jen06a}.
	\begin{lemma}(Cf. \cite[Theorem 1]{Jen06a})\label{thm:jen}
	Let $\mu \in \mathcal{M}_{F}^{e}(X)$. Then there exists $\ph \in C(X,\RR)$ such that $\mathcal{M}_{\max}(\ph)=\{\mu\}$.
	\end{lemma}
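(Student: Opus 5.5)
The plan is to reduce the statement to convex geometry of the simplex $\mathcal{M}_F(X)$. The two ingredients are: (i) $\{\mu\}$ is an exposed face of $\mathcal{M}_F(X)$, i.e.\ there is a continuous affine $A:\mathcal{M}_F(X)\to\RR$ attaining its maximum only at $\mu$; (ii) every continuous affine function on $\mathcal{M}_F(X)$ is exactly of the form $\nu\mapsto\int\ph\,d\nu$ for some $\ph\in C(X,\RR)$. Together these give $\mathcal{M}_{\max}(\ph)=\{\mu\}$ immediately.

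For (i), I would use the ergodic decomposition and Lemma \ref{lem:barycentre map}: $\nu\mapsto b_\nu$ is an affine isometric bijection onto the probability measures on $\mathcal{M}_F^e(X)$. So $\mathcal{M}_F(X)$ is a metrizable Choquet simplex whose extreme points are the ergodic measures. Since $\mu$ is ergodic, $\{\mu\}$ is a closed face. I would then invoke the standard fact from Choquet theory (Alfsen, Edwards separation) that every closed face of a metrizable simplex is exposed, which yields $A$. If one wants to avoid citing this, the hands-on alternative is to build $A=\sum_n 2^{-n}A_n$ with continuous affine $A_n\le 0$ vanishing exactly on the closed face $\{\nu:\int g_n d\nu=\int g_nd\mu\}\cap\{\text{previous faces}\}$, using a dense sequence $(g_n)$ and Edwards separation at each step. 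Either way, (i) is pure convexity and does not involve the flow.

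For (ii), let $R:C(X,\RR)\to A(\mathcal{M}_F(X))$ be $R\ph(\nu)=\int\ph\,d\nu$. Its kernel contains the closed span $\overline{\mathrm{Cob}}$ of $\{\psi-\psi\circ f_t\}$. The identity I would use is
\[
\sup_{\nu\in\mathcal{M}_F(X)}\int\ph\,d\nu=\inf_{c\in\overline{\mathrm{Cob}}}\max_X(\ph+c).
\]
This is the flow version of the usual duality. It follows by Hahn–Banach exactly as in the proof of Lemma \ref{lem:tangentmeasure}: a positive normalized functional annihilating $\overline{\mathrm{Cob}}$ is precisely an $F$-invariant probability. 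Applying the identity to $\ph$ and $-\ph$ shows that $R$ induces a bounded-below map on $C(X,\RR)/\overline{\mathrm{Cob}}$ (the sup-norm of $R\ph$ is comparable to the quotient norm up to constants). Hence the range of $R$ is closed. The range is also dense in $A(\mathcal{M}_F(X))$ by a Hahn–Banach / Riesz argument: a signed measure on the simplex annihilating all $R\ph$ must annihilate every continuous affine function, since affine functions are determined by barycentres. Therefore $R$ is surjective, and we take $\ph$ with $R\ph=A$.

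The step I expect to be the main obstacle is (ii), specifically the closed-range claim. Density alone only gives $\ph_n$ with $R\ph_n\to A$ uniformly, which destroys uniqueness of the maximizer. To handle this I would first try the telescoping construction: pick $\ph_n$ with $\|R\ph_n-A\|<4^{-n}$, then use the duality identity to correct each difference $\ph_{n+1}-\ph_n$ by an element of $\overline{\mathrm{Cob}}$ so that its $C^0$-norm is at most a constant times $4^{-n}$. The corrected series then converges in $C(X,\RR)$ to some $\ph$ with $R\ph=A$ exactly. Uniqueness of the maximizing measure then follows from the exposedness in (i).
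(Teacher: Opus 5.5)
The paper gives no argument of its own for this lemma. It cites \cite[Theorem 1]{Jen06a} and asserts that the homeomorphism proof carries over to flows. Your proposal is a genuinely self-contained, Choquet-theoretic proof, and its architecture is sound.
\begin{itemize}
\item[(i)] $\{\mu\}$ is an exposed face of the metrizable simplex $\mathcal{M}_F(X)$. Edwards' separation together with $\sigma$-compactness of $\mathcal{M}_F(X)\setminus\{\mu\}$ does give this.
\item[(ii)] $R$ maps $C(X,\RR)$ onto the continuous affine functions on $\mathcal{M}_F(X)$.
\end{itemize}
Together these give $\mathcal{M}_{\max}(\ph)=\{\mu\}$. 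Compared with the bare citation, your route makes explicit that the only flow-specific inputs are uniqueness of the ergodic decomposition and the identification of the annihilator of $\overline{\mathrm{Cob}}$ with (signed) invariant measures. So the adaptation the paper leaves to the reader is actually checked. Your density argument via barycentres is correct.

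One step, however, is not justified as written. Applying your one-sided duality to $\ph$ and $-\ph$ does not show that $R$ is bounded below on $C(X,\RR)/\overline{\mathrm{Cob}}$. It produces two different corrections $c_1,c_2\in\overline{\mathrm{Cob}}$: one with $\max_X(\ph+c_1)$ close to $\sup_\nu\int\ph\,d\nu$, and one with $\min_X(\ph-c_2)$ close to $\inf_\nu\int\ph\,d\nu$. The quotient norm needs a single correction that does both, and nothing in your argument supplies one.

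The fix is short. The function $c_T:=\frac1T\int_0^T\ph\circ f_t\,dt-\ph$ is a uniform limit of Riemann sums of $\ph\circ f_t-\ph$, so it lies in $\overline{\mathrm{Cob}}$. A weak$^*$ limit of empirical measures $\frac1T\int_0^T\delta_{f_tx_T}\,dt$ gives $\limsup_{T\to\infty}\max_X\bigl|\frac1T\int_0^T\ph\circ f_t\,dt\bigr|\le\sup_{\nu\in\mathcal{M}_F(X)}\bigl|\int\ph\,d\nu\bigr|$. Hence the induced map on the quotient is an isometry, its range is closed, and with density $R$ is onto. The telescoping series is then unnecessary. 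Equivalently, apply Hahn--Banach on the quotient and use that the Jordan parts of an $F$-invariant signed measure are $F$-invariant, since each $f_t$ is a homeomorphism.

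Separately, drop your hands-on alternative for (i): the level sets $\{\nu:\int g_n\,d\nu=\int g_n\,d\mu\}$ are not faces. If you want to avoid quoting the exposed-face theorem, proceed as follows:
\begin{itemize}
\item write $\mathcal{M}_F(X)\setminus\{\mu\}=\bigcup_n C_n$ with each $C_n$ compact;
\item note that $\mu\notin\overline{\mathrm{conv}}(C_n)$ by Milman's theorem;
\item separate to get a continuous convex $f_n\ge 0$ with $f_n(\mu)=0$ and $f_n\ge 1$ on $C_n$;
\item apply Edwards' theorem to $f_n\le M_n(1-\chi_{\{\mu\}})$ to get a continuous affine $a_n$ with the same three properties;
\item take $A=-\sum_n 2^{-n}a_n/\|a_n\|$.
\end{itemize}
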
	

\begin{proof}[Proof of Proposition \ref{thm:morris}]
Given a convergent sequence $ \ph_n \rightarrow \ph $ in $C(X,\RR)$, let $\mu_n \in \mathcal{M}_{\max}(\ph_n)$ be a sequence of corresponding maximizing measures. Without loss of generality, we assume that $\{\mu_n\}$ converges in weak-* topology to $\mu$. The following inequality holds for any invariant measure $\nu$ and implies that $\mu$ maximizes $\ph$:
	$$\int\ph d\nu-\|\ph-\ph_n\|_{C^0} \le \int \ph_n d\nu \le \int \ph_n d\mu_n \le \int \ph d\mu_n +\|\ph-\ph_n\|_{C^0}.
	$$
Obviously $U^c=\{\ph\in C(X, \RR): \overline{\mathcal{M}_F^e(X)}\cap \mathcal{M}_{\max}(\ph)\cap \mathcal{U}^c\neq \emptyset \}$. Take any convergent sequences $\ph_n \in U^c \rightarrow \ph$ and $\mu_n \in \mathcal{M}_{\max}(\ph_n)\cap \mathcal{U}^c \rightarrow \mu$. As $\mathcal{U}^c$ is closed, we have $\mu \in \mathcal{U}^c$ and $\ph \in U^c$. So $U$ is open.

Next we show $$\mathcal{U}:=\mathcal{M}_F^e(X)\cap \bigcup_{\ph\in U}\mathcal{M}_{\max}(\ph)$$ is open when $U$ is open. Take any $\mu \in \mathcal{U}$. Let $ \ph \in U$ such that $\mu \in \mathcal{M}_{\max}(\ph)$ and let $\mathcal{V} \subset \mathcal{M}_{F}^{e}(X)$ be an open neighborhood of $\mu$ small enough such that $\Lambda_F(\ph)-\int \ph d\nu < \eps$ for any $\nu \in \mathcal{V}$. According to Lemma \ref{close}, there exists $g$ maximized by $\nu$ with $\|\ph-g\|_{C^0} < \eps$ for any $\nu \in \mathcal{V}$. Choosing smaller $\eps$ if needed, we have $g\in U$ and $\nu \in \mathcal{U}$  by openness of $U$. So $\mathcal{V} \subset \mathcal{U}$ and hence $\mathcal{U}$ is open. 

To show $U$ is dense when $\mathcal{U}$ is dense, it suffices to prove $U$ intersects every nonempty open subset $V\subset C(X,\RR)$. We already prove that $\mathcal{V}:=\overline{\mathcal{M}_F^e(X)}\cap \bigcup_{\ph\in V}\mathcal{M}_{\max}(\ph)$ is nonempty open subset in $\overline{\mathcal{M}_{F}^{e}(X)}$. There exists $\mu \in \mathcal{V}\cap \mathcal{U}$ by density of $\mathcal{U}$. Also there exists $g\in V$ maximized by $\mu$. Meanwhile  Lemma \ref{thm:jen} says we can find $h\in C(X,\RR)$ uniquely maximized by $\mu$. Due to homogeneity, $\mathcal{M}_{\max}(\eps h)=\mathcal{M}_{\max}(h)=\{\mu\}$ for any $\eps >0$.  This implies $\mathcal{M}_{\max}(g+\eps h)=\{\mu\}$ and for small enough $\eps$. Therefore, $g+\eps h \in U \cap V$. This proves the density of $U$.

Conversely, let $\mathcal{V}$ be an open subset in $\mathcal{M}_{F}^{e}(X)$ and $\mu$ be an ergodic element in $\mathcal{V}$. By Lemma \ref{thm:jen}  there exists $ \ph \in C(X,\RR)$ uniquely maximized by $\mu$. We already show that $V=\{\ph\in C(X, \RR): \overline{\mathcal{M}_F^e(X)}\cap \mathcal{M}_{\max}(\ph)\subset \mathcal{V}\}$ is open in $C(X,\RR)$. Obviously $\ph \in V$, therefore $V$ is nonempty and open. Since $U$ is dense, it follows that $U \cap V \neq \emptyset $ and consequently $\mathcal{U} \cap \mathcal{V} \neq \emptyset $. Therefore, $\mathcal{U}$ is dense. 
\end{proof}
    
\ \
\\[-2mm]
\textbf{Acknowledgement.}
This work is supported by National Key R\&D Program of China No. 2021YFA1003204, NSFC Nos. 12401239, 12201536, 12071474, and STCSM No. 24ZR1437200.

\end{document}